\documentclass[10pt]{article}

% Be sure to use PDF Latex
\pdfoutput=1

\usepackage[latin1]{inputenc}

\usepackage{graphicx}
\usepackage{hyperref}

\usepackage{mystyle}

\graphicspath{{./images/}}

\title{Exact Support Recovery\\for Sparse Spikes Deconvolution}

\author{
	\begin{tabular}{c}
		Vincent Duval and Gabriel Peyr\'e \\
		CNRS and Universit\'e Paris-Dauphine \\
		{\small  \url{{vincent.duval,gabriel.peyre}@ceremade.dauphine.fr} }
	\end{tabular}
}

\begin{document}

\maketitle

% !TEX root = ../DuvalPeyre-SparseSpikes.tex

\begin{abstract}
	This paper studies sparse spikes deconvolution over the space of measures. We focus on the recovery properties of the support of the measure (i.e. the location of the Dirac masses) using total variation of measures (TV) regularization. This regularization is the natural extension of the $\ell^1$ norm of vectors to the setting of measures.
	We show that support identification is governed by a specific solution of the dual problem (a so-called dual certificate) having minimum $L^2$ norm. Our main result shows that if this certificate is non-degenerate (see the definition below), when the signal-to-noise ratio is large enough TV regularization recovers the exact same number of Diracs. We show that both the locations and the amplitudes of these Diracs converge toward those of the input measure when the noise drops to zero. 
	Moreover the non-degeneracy of this certificate can be checked by computing a so-called vanishing derivative pre-certificate. This proxy can be computed in closed form by solving a linear system. 		
 	Lastly, we draw connections between the support of the recovered measure on a continuous domain and on a discretized grid. We show that when the signal-to-noise level is large enough, and provided the aforementioned dual certificate is non-degenerate, the solution of the discretized problem is supported on pairs of Diracs which are neighbors of the Diracs of the input measure. This gives a precise description of the convergence of the solution of the discretized problem toward the solution of the continuous grid-free problem, as the grid size tends to zero.
\end{abstract}

% !TEX root = ../DuvalPeyre-SparseSpikes.tex

\section{Introduction}

%%%%%%%%%%%%%%%%%%%%%%%%%%%%%%%%%%%%%%%%%%%%%%%%%
\subsection{Sparse Spikes Deconvolution}

Super-resolution is a central problem in imaging science, and loosely speaking corresponds to recovering fine scale details from a possibly noisy input signal or image. This thus encompasses the problems of data interpolation (recovering missing sampling values on a regular grid) and deconvolution (removing acquisition blur). We refer to the review articles~\cite{Park-review,Lindberg-review} and the references therein for an overview of these problems. 

We consider in our article an idealized super-resolution problem, known as sparse spikes deconvolution. It corresponds to recovering 1-D spikes (i.e. both their positions and amplitudes) from blurry and noisy measurements. These measurements are obtained by a convolution of the spikes train against a known kernel. This setup can be seen as an approximation of several imaging devices. A method of choice to perform this recovery is to introduce a sparsity-enforcing prior, among which the most popular is a $\ell^1$-type norm, which favors the emergence of spikes in the solution.

%%%%%%%%%%%%%%%%%%%%%%%%%%%%%%%%%%%%%%%%%%%%%%%%%
\subsection{Previous Works}

%%%
\paragraph{Discrete $\ell^1$ regularization. }

$\ell^1$-type techniques were initially proposed in geophysics~\cite{Claerbout-geophysics,santosa1986linear,Levy-Fullagar-81} to recover the location of density changes in the underground for seismic exploration. They were later studied in depth by David Donoho and co-workers, see for instance~\cite{Donoho-superresol-sparse}. Their popularity in signal processing and statistics can be traced back to the development of the basis pursuit method~\cite{chen1999atomi} for approximation in redundant dictionaries and the Lasso method~\cite{tibshirani1996regre} for statistical estimation. 

The theoretical analysis of the $\ell^1$-regularized deconvolution was initiated by Donoho~\cite{Donoho-superresol-sparse}.  Assessing the performance of discrete $\ell^1$ regularization methods is challenging and requires to take into account both the specific properties of the operator to invert and of the signal that is aimed at being recovered. A popular approach is to assess the recovery of the positions of the non-zero coefficients. This requires to impose a well-conditioning constraint that depends on the signal of interest, as initially introduced by Fuchs~\cite{fuchs2004on-sp}, and studied in the statistics community under the name of ``irrepresentability condition'', see~\cite{Zhao-irrepresentability}. A similar approach is used by Dossal and Mallat in~\cite{DossalMallat} to study the problem of support stability over a discrete grid.

Imposing the exact recovery of the support of the signal to recover might be a too strong assumption. The inverse problem community rather focuses on the $L^2$ recovery error, which typically leads to a linear convergence rate with respect to the noise amplitude. The seminal paper of Grasmair et al.~\cite{Grasmair-cpam} gives a necessary and sufficient condition for such a convergence, which corresponds to the existence of a non-saturating dual certificate (see Section~\ref{sec-preliminaries} for a precise definition of certificates).  This can be understood as an abstract condition, which is often difficult to check on practical problems such as deconvolution. 

Note that the continuous setting adopted in the present paper might be seen as a limit of such discrete problems, and in Section~\ref{sec-discrete}, we relate our results to well-known results on discrete grids. 

Let us also note that, although we focus here on $\ell^1$-based methods, there is a vast literature on various non-linear super-resolution schemes. This includes for instance greedy~\cite{Odendaal-music,Lorenz-omp}, root finding~\cite{Blu-fri,Condat-Cadzow}, matrix pencils~\cite{Demanet-pencil} and compressed sensing~\cite{Fannjiang-cs-unresolved,Duarte-spectral-cs} approaches. 

%%%
\paragraph{Inverse problems regularization with measures.}

Working over a discrete grid makes the mathematical analysis difficult. Following recent proposals~\cite{deCastro-beurling,Bredies-space-measures,Candes-toward,Bhaskar-line-spectral}, we consider here this sparse deconvolution over a continuous domain, i.e. in a grid-free setting. This shift from the traditional discrete domain to a continuous one offers considerable advantages in term of mathematical analysis, allowing for the first time the emergence of almost sharp signal-dependent criteria for stable spikes recovery (see references below).  Note that while the corresponding continuous recovery problem is infinite dimensional in nature, it is possible to find its solution using either provably convergent algorithms~\cite{Bredies-space-measures} or root finding methods for ideal low pass filters~\cite{Candes-toward}.

Inverse problem regularization over the space of measures is now well understood (see for instance~\cite{Scherzer-ssvm,Bredies-space-measures}), and requires to perform variational analysis over a non-reflexive Banach space (as in \cite{Hofmann-measures}), which leads to some mathematical technicalities. We capitalize on these earlier works to build our analysis of the recovery performance.

%%%
\paragraph{Theoretical analysis of deconvolution over the space of measures.}

For deconvolution from ideal low-pass measurements, the ground-breaking paper~\cite{Candes-toward} shows that it is indeed possible to construct a dual certificate by solving a linear system when the input Diracs are well-separated. This work is further refined in~\cite{Candes-superresol-noisy} that studies the robustness to noise. In a series of paper~\cite{Bhaskar-line-spectral,Tang-linea-spectral} the authors study the prediction (i.e. denoising) error using the same dual certificate, but they do not consider the reconstruction error (recovery of the spikes). In our work, we use a different certificate to assess the exact recovery of the spikes when the noise is small enough.

In view of the applications of superresolution, it is crucial to understand the precise location of the recovered Diracs locations when the measurements are noisy. Partial answers to this questions are given in~\cite{Fernandez-Granda-support} and~\cite{Azais-inaccurate}, where it is shown (under different conditions on the signal-to-noise level) that the recovered spikes are clustered tightly around the initial measure's Diracs. In this article, we fully answer the question of the position of the recovered Diracs in the setting where the signal-to-noise ratio is large enough. 

% TODO
% See also for a different, less tight, analysis~\cite{Kahane-XUPS}
%~\cite{Azais-inaccurate}~\cite{deCastro-beurling}

%%%%%%%%%%%%%%
\subsection{Formulation of the Problem and Contributions.}

Let $m_0=\sum_{i=1}^N a_{0,i}\delta_{x_{0,i}}$ be a discrete measure defined on the torus $\TT=\RR/\ZZ$, where $a_0 \in \RR^N$ and $x_0 \in \TT^N$. We assume we are given some low-pass filtered observation $y_0=\Phi m_0 \in L^2(\TT)$. Here $\Phi$ denotes a convolution operator with some kernel $\varphi \in C^2(\TT)$. The observation might be noisy, in which case we are given $y_0+w= \Phi m_0 + w$, with $w\in L^2(\TT)$, instead of $y_0$.

Following~\cite{Candes-toward,deCastro-beurling}, we hope to recover $m_0$ by solving the problem
\begin{align}\tag{$\Pp_0(y_0)$}
	\umin{\Phi m=y_0} \normTV{m}.
\label{intro-constrained}
\end{align}
among all Radon measures, where $\normTV{m}$ refers to the total variation (defined below) of $m$. Note that in our setting, the total variation is the natural extension of the $\ell^1$ norm of finite dimensional vectors to the setting of Radon measures, and it should not be mistaken for the total variation of functions, which is routinely used to recover signals or images.

We may also consider reconstructing $m_0$ by solving the following penalized problem for $\la >0$, also known as the Beurling LASSO (see for instance~\cite{Azais-inaccurate}):
\begin{align*}\tag{$\Pp_\la(y_0)$}
	\umin{m} \frac{1}{2} \norm{\Phi m - y_0}_2^2 + \la \normTV{m}.
\label{intro-initial}
\end{align*}
This is especially useful if the observation is noisy, in which case $y_0$ should be replaced with $y_0+w$.

Four questions immediately arise:
\begin{enumerate}
	\item Does the resolution of~\eqref{intro-constrained} for $y_0=\Phi m_0$ actually recover interesting measures $m_0$?
	\item How close is the solution of~\eqref{intro-initial} to the solution of~\eqref{intro-constrained} when $\la$ is small enough? 
	\item How close is the solution of~$(\Pp_\la(y_0+w))$ to the solution of~\eqref{intro-initial} when both $\la$ and $w/\la$ are small enough? 
	\item What can be said about the above questions when solving~\eqref{intro-initial} with measures supported on a fixed finite grid? 
\end{enumerate}

The first question is addressed in the landmark paper~\cite{Candes-toward} in the case of ideal low-pass filtering: measures $m_0$ whose spikes are separated enough are the unique solution of~\eqref{intro-constrained} (for data $y_0=\Phi m_0$). Several other cases (using observations different from convolutions) are also tackled in~\cite{deCastro-beurling}, particularly in the case of non-negative measures.

The second and third questions receive partial answers in~\cite{Bredies-space-measures,Candes-superresol-noisy,Azais-inaccurate,Fernandez-Granda-support}. In~\cite{Bredies-space-measures} it is shown that if the solution of~\eqref{intro-constrained} is unique, the measures recovered by $(\Pp_\la(y_0+w))$ converge to the solution of~\eqref{intro-constrained} in the sense of the weak-* convergence when $\la \to 0$ and $\frac{\norm{w}_2^2}{\la}\to 0$. In~\cite{Candes-superresol-noisy}, the authors measure the reconstruction error using the $L^2$ norm of a low-pass filtered version of recovered measures. In~\cite{Azais-inaccurate}, error bounds are derived from the amplitudes of the reconstructed measure. In~\cite{Fernandez-Granda-support}, bounds are given in terms of the original measure. However, those works provide little information about the structure of the measures recovered by $(\Pp_\la(y_0+w))$: \textit{are they made of less spikes than $m_0$ or, in the contrary, do they present lots of parasitic spikes? What happens if one compels the spikes to belong to a finite grid?}

The fourth question is of primary importance since most numerical schemes for sparse regularization solve a finite dimensional optimization problem over a fixed discretization grid. Following~\cite{Candes-toward}, one can remark that in the noiseless setting, if $m_0$ is recovered over the continuous domain and if its support is included in the grid, $m_0$ is also guaranteed to be recovered by the discretized problem. But this is of little interest in practice because the noise is likely to impact in a different manner the discrete problem and the input measure might fall outside the grid locations. Dossal and Mallat in~\cite{DossalMallat} study the stability of the position of the Diracs on the grid, which leads to overly pessimistic conclusions because noise typically forces the spikes to translate over the domain. Studying the convergence of the discretized problem toward the continuous one is thus important to obtain a precise description of the discretized solution. To the best of our knowledge, the work of ~\cite{Bhaskar-line-spectral} is the only one to provide some conclusion about this convergence in term of denoising error. No previous work has studied the capability of the discretized problem to estimate in a precise manner the location of the spikes of the input measure.

%%%%%%%%%%%%%%%%%%%%%%%%%%%%%%%%%%%%%%%%%%%%%%%%%%%%%%
\paragraph{Contributions.} 

The present paper studies in detail the structure of the recovered measure. For this purpose, we define the minimal $L^2$-norm certificate. This certificate fully governs the behavior of the regularization when both $\la$ and $\norm{w}_2/\la$ are small. 

Our first contribution is a set of results indicating that the regions of saturation of the certificate (when it reaches $+1$ or $-1$) are approximately stable when $\la$ and $\norm{w}_2/\la$ are small enough. This means that the recovered measures are supported closely to the support of the input measure if the latter is identifiable (solution of the noiseless problem~\eqref{intro-constrained}). 

Our second contribution introduces the \textit{Non Degenerate Source Condition}, which imposes that the second derivative of the minimal-norm certificate does not vanish on the saturation points. Under this condition, we show that for $\la$ and $\norm{w}_2/\la$ small enough, the reconstructed measure has exactly the same number of spikes as the original measure and that their locations and amplitudes converge to those of the original one.

Our third contribution shows that under the \textit{Non Degenerate Source Condition}, the minimal norm certificate can actually be computed in closed form by simply solving a linear system. This in turn also implies that the errors in the amplitudes and locations decay linearly with respect to the noise level. 

Our fourth and last contribution focuses on the regularization over a discrete finite grid, which corresponds to the so-called Lasso or Basis Pursuit Denoising problem. We show that when $\la$ and $\norm{w}_2/\la$ are small enough, and provided that the Non Degenerate Source Condition holds, the discretized solution is located on pairs of Diracs adjacent to the input Diracs location. This gives a precise description of how the solution to the discretized problem converges to the one of the continuous problem when the stepsize of the grid vanishes.

Throughout the paper, the proposed definitions and results are illustrated in the case of the ideal low-pass filter, showing that the assumptions are actually relevant. Note that the code to reproduce the figures of this article is available online\footnote{\url{https://github.com/gpeyre/2013-FOCM-SparseSpikes/}}.
 
%%%%%%%%%%%%%%%%%%%%%%%%%%%%%%%%%%%%%%%%%%%%%%%%%%%%%%
\paragraph{Outline of the paper.}

Section~\ref{sec-preliminaries} defines the framework for the recovery of Radon measures using total variation minimization. We also expose basic results that are used throughout the paper. Section~\ref{sec-noise-robust} is devoted to the main result of the paper: we define the \textit{Non Degenerate Source Condition} and we show that it implies the robustness of the reconstruction using $(\Pp_\la(y_0+w))$. In Section~\ref{sec-vanishing} we show how the specific dual certificate involved in the \textit{Non Degenerate Source Condition} can be computed numerically by solving a linear system. Lastly, Section~\ref{sec-discrete} focuses on the recovery of measures on a discrete grid.

%%%%%%%%%%%%%%%%%%%%%%%%%%%%%%%%%%%%%%%%%%%%%%%%%%%%%%
\subsection{Notations}

For any Radon measure $m$ defined on $\TT$, we denote its support by $\supp(m)$.
If $\supp(m)$ is a finite set (in which case we say that $m$ is a discrete measure) and $m\neq 0$, then
 $m$ is of the form $m= \sum_{i=1}^N a_i \delta_{x_i}$, where $N\in \NN^*$, $a\in \RR^N$, $x\in \TT^N$
 and $a_i\neq 0$ and $x_i\neq x_j$ for all $1\leq i,j\leq N$. In the rest of the paper, we shall write $m=m_{a,x}$
to  hint that $m$ has the above decomposition (implying that $a_i\neq 0$ and $x_i\neq x_j$ for all $1\leq i,j\leq N$). 

We also define the \textit{signed support}:
\begin{align*}
  \ssupp m &= (\supp m_+) \times \{1\}\cup (\supp m_-)\times\{-1\} \subset \TT \times \{+1,-1 \}
\end{align*}
where $m_+$ (resp. $m_-$) denotes the positive (resp. negative) part of $m$.
For a discrete measure $m=m_{a,x}$, 
\begin{align*}
\ssupp m &=\left\{(t,v)\in \TT\times \{+1,-1\}, \ m(\{t \})\neq 0 \mbox{ and } \sign m(\{t \})=v \right\}\\
&= \{(x_i,\sign a_i), \ 1\leq i\leq N\}.
\end{align*}
We shall consider restrictions of measures and functions to subsets of $\TT$. For $m \in \Mm(\TT)$ a discrete measure and $J=\{x_1,\ldots,x_k\} \subset \TT$ a finite set, we define 
\eq{
	\restr{m}{J} = a \in \TT^{k}
	\qwhereq
	\foralls i=1,\ldots,k, \quad a_i = m(\{x_i\}).  
}
For $\eta \in C(\TT)$ a continuous function defined on $\TT$, we define
\eq{
	\restr{\eta}{J} = ( \eta(x_j) )_{j=1}^k \in \TT^{k}.
}

 Given a convolution operator $\Phi$ with kernel $t\mapsto \varphi(-t)$, we define $\Phi_x : \RR^N \rightarrow L^2(\TT)$ (resp. $\Phi_x'$, $\Phi_x''$) by
\begin{align}\label{eq-notation-Phix}
\forall a\in \RR^N,\	\Phi_x(a) &= \Phi( m_{a,x} ) = \sum_{i=1}^N a_i \phi(x_i - \cdot), \\
	\Phi'_x(a) &= (\Phi_x(a))' = \sum_{i=1}^N a_i \phi'(x_i - \cdot), \\
	{\Phi''_x}(a) &= (\Phi_x(a))'' = \sum_{i=1}^N a_i \phi''(x_i - \cdot).
\end{align}
We define 
\begin{align}\label{eq-gammax}
	\Ga_x &= (\Phi_x, \Phi_x') : (u,v) \in \RR^N \times \RR^N \mapsto \Phi_x u + \Phi'_x v \in \Ldeux(\TT), \\
	\Ga_x'&= (\Phi'_x, \Phi''_x): (u,v) \in \RR^N \times \RR^N \mapsto \Phi_x' u + \Phi''_x v \in \Ldeux(\TT). 
\end{align}

Eventually, in order to study small noise regimes, we shall consider domains $D_{\alpha,\la_0}$, for $\alpha>0$, $\la_0>0$, where:
\eql{\label{eq-constr-set}
	D_{\alpha,\la_0}=\enscond{(\la,w)\in \RR_+\times L^2(\TT) }{ 0\leq \la \leq \la_0 \qandq \norm{w}_2\leq \alpha \la  }.
}

% !TEX root = ../DuvalPeyre-SparseSpikes.tex

\section{Preliminaries}
\label{sec-preliminaries}

In this section, we precise the framework and we state the basic results needed in the next sections. We refer to~\cite{brezis1999analyse} for aspects regarding functional analysis and to~\cite{ekeland1976convex} as far as duality in optimization is concerned.

%%%%%%%%%%%%%%%%%%%%%%%%%%%%%%%%%%%%%%%%%%%%%%%%%%%%%%%%%%%%%%%%%%%%%%%
\subsection{Topology of Radon Measures}

Since $\TT$ is compact, the space of Radon measures $\Mm(\TT)$ can be defined as the dual of the space $C(\TT)$ of continuous functions on $\TT$, endowed with the uniform norm. It is naturally a Banach space when endowed with the dual norm (also known as the total variation), defined as
\begin{align}
	\forall m \in \Mm(\TT), \quad
	\normTV{m}= \sup 
		\enscond{ \int \psi \d m }{ \psi\in C(\TT), \normi{\psi} \leq 1 }.
\label{eq-def-tv}
\end{align}
In that case, the dual of $\Mm(\TT)$ is a complicated space, and it is strictly larger than $C(\TT)$ as $C(\TT)$ is not reflexive. 

However, if we endow $\Mm(\TT)$ with its weak-* topology (i.e. the coarsest topology such that the elements of $C(\TT)$ define continuous linear forms on $\Mm(\TT)$), then $\Mm(\TT)$ is a locally convex space whose dual is $C(\TT)$.

In the following, we endow $C(\TT)$ (respectively $\Mm(\TT)$) with its weak (respectively its weak-*) topology so that both have symmetrical roles:  one is the dual of the other, and conversely. Moreover, since $C(\TT)$ is separable, the set 
$\enscond{m \in \Mm(\TT) }{ \normTV{m} \leq 1 }$ endowed with the weak-* topology is metrizable.

Given a function $\phi \in C^{2}(\TT, \RR)$, we define an operator 
$\Phi : \Mm(\TT) \rightarrow \Ldeux(\TT)$ as 
\eq{
	\foralls m \in \Mm(\TT), \quad \Phi(m) : t \mapsto \int_{\TT} \phi(x-t) \d m(x).
}
It can be shown using Fubini's theorem that $\Phi$ is weak-* to weak continuous.
Moreover, its adjoint operator $\Phi^* : \Ldeux(\TT) \rightarrow C(\TT)$ is defined as
\eq{
	\foralls y \in \Ldeux(\TT), \quad 
	\Phi^*(y) : t \mapsto \int_{\TT} \phi(t-x) y(x) \d x.
}

%%%%%%%%%%%%%%%%%%%%%%%%%%%%%%%%%%%%%%%%%%%%%%
\subsection{Subdifferential of the Total Variation}

It is clear from the definition of the total variation in~\eqref{eq-def-tv} that it is convex lower semi-continuous with respect to the weak-* topology. Its subdifferential is defined as 
\begin{align}
 	\partial \normTV{m} = \enscond{\eta\in C(\TT)}{\forall \tilde{m}\in \Mm(\TT), \normTV{\tilde{m}} \geq \normTV{m} + \int \eta \, \d(\tilde{m}-m) },
\end{align}
for any $m\in \Mm(\TT)$ such that $\normTV{m}<+\infty$.

Since the total variation is a sublinear function, its subgradient has a special structure. One may show (see Proposition~\ref{prop-subdifferential} in Appendix~\ref{sec-auxiliary}) that
\begin{align}
 	\partial \normTV{m} = \enscond{\eta\in C(\TT)}{ \normi{\eta} \leq 1 \qandq \int \eta \, \d m =\normTV{m}  }.
\end{align}

In particular, when $m$ is a measure with finite support, i.e. $m=\sum_{i=1}^N a_i \delta_{x_i}$ for some $N\in \NN$, with $(a_i)_{1\leq i\leq N}\in (\RR^*)^N$ and distinct $(x_i)_{1\leq i \leq N}\in \TT^N$ 
\begin{align}
 	\partial \normTV{m} = \enscond{\eta\in C(\TT)}{ \normi{\eta} \leq 1 \;\text{and}\; \foralls i=1,\ldots,N, \; \eta(x_i)=\sign(a_i)  }.
\end{align}

%For brevity, we may write the second condition as $\eta (x)= \sign (a)$ where $x=(x_1,\ldots x_N)$, and $a=(a_1,\ldots a_N)$.

%%%%%%%%%%%%%%%%%%%%%%%%%%%%%%%%%%%%%%%%%%%%%%
\subsection{Primal and Dual Problems}

Given an observation $y_0=\Phi m_0 \in \Ldeux(\TT)$ for some $m_0\in \Mm(\TT)$, we consider reconstructing $m_0$ by solving either the relaxed problem for $\la >0$
\eql{\label{eq-initial-pb}\tag{$\Pp_\la(y_0)$}
	\umin{m \in \Mm(\TT)} \frac{1}{2} \norm{\Phi(m) - y_0}_2^2 + \la \normTV{m},
}
or the constrained problem 
\eql{\label{eq-constrained-pbm}\tag{$\Pp_0(y_0)$}
	\umin{\Phi(m)=y_0} \normTV{m}.
}
If $m_0$ is the unique solution of \eqref{eq-constrained-pbm}, we say that $m_0$ is \textit{identifiable}.

In the case where the observation is noisy (i.e. the observation $y_0$ is replaced with $y_0+w$ for $w\in L^2(\TT)$), we attempt to reconstruct $m_0$ by solving $\Pp_\la(y_0+w)$ for a well-chosen value of $\la>0$.

Existence of solutions for~\eqref{eq-initial-pb} is shown in~\cite{Bredies-space-measures}, and existence of solutions
for~\eqref{eq-constrained-pbm} can be checked using the direct method of the calculus of variations (recall that for~\eqref{eq-constrained-pbm}, we assume that the observation is $y_0=\Phi m_0$).

A straightforward approach to studying the solutions of Problem~\eqref{eq-initial-pb} is then to apply Fermat's rule: 
a discrete measure $m=m_{a,x}=\sum_{i=1}^N a_i\delta_{x_i}$ is a solution of $\eqref{eq-initial-pb}$ if and only if there exists $\eta\in C(\TT)$ such that
\eq{
	\Phi^*(\Phi m -y_0) +\la \eta =0,
}
with $\normi{\eta} \leq 1$ and $\eta(x_i)=\sign(a_i)$ for $1\leq i\leq N$.

Another source of information for the study of Problems~\eqref{eq-initial-pb} and~\eqref{eq-constrained-pbm} is given by their associated dual problems. In the case of the ideal low-pass filter, this approach is also the key to the numerical algorithms 
used in~\cite{Bhaskar-line-spectral,Candes-toward,Azais-inaccurate}: the dual problem can be recast into a finite-dimensional problem.

The Fenchel dual problem to~\eqref{eq-initial-pb} is given by
\eql{\label{eq-initial-dual}\tag{$\Dd_\la(y_0)$}
	\umax{ \normi{\Phi^* p} \leq 1} \dotp{y_0}{p} - \frac{\la}{2}\norm{p}_2^2,
}
which may be reformulated as a projection on a closed convex set (see~\cite{Bredies-space-measures,Azais-inaccurate})
\eql{\label{eq-initial-dualbis}\tag{$\Dd'_\la(y)$}
	\umin{ \normi{\Phi^* p} \leq 1} \norm{\frac{y_0}{\la}-p}_2^2.
}
This formulation immediately yields existence and uniqueness of a solution to~\eqref{eq-initial-dual}.

The dual problem to~\eqref{eq-constrained-pbm} is given by
\eql{\label{eq-constrained-dual}\tag{$\Dd_0(y_0)$}
	\usup{ \normi{\Phi^* p}\leq 1} \dotp{y_0}{p}.
}
 Contrary to~\eqref{eq-initial-dual}, the existence of a solution to~\eqref{eq-constrained-dual} is not always guaranteed, so that in the following (see Definition~\ref{def-ndsc}) we make this assumption. 

Existence is guaranteed when for instance $\Im \Phi^*$ is finite-dimensional (as is the case in the framework of~\cite{Candes-toward}). If a solution to~\eqref{eq-constrained-dual} exists, the unique solution of~\eqref{eq-initial-dual} converges to a certain solution of~\eqref{eq-constrained-dual} for $\la \to 0^+$ as shown in Proposition~\ref{prop-gamma-convergence} below.

%%%%%%%%%%%%%%%%%%%%%%%%%%%%%%%%%%%%%%%%%%%%%%
\subsection{Dual Certificates}
\label{sec-dualcertif}

The strong duality between $(\Pp_\la(y_0))$ and~\eqref{eq-initial-dual} is proved in~\cite[Prop.~2]{Bredies-space-measures} by seeing~\eqref{eq-initial-dualbis} as a predual problem for~\eqref{eq-initial-pb}. As a consequence, both problems have the same value and any solution $m_\la$ of~\eqref{eq-initial-pb} is linked with the unique solution $p_\la$ of~\eqref{eq-initial-dual} by the extremality condition
\begin{align}
	\left\{
		\begin{array}{c}
			\Phi^*p_\la \in \partial\normTV{m_\la}, \\
			-p_\la = \frac{1}{\la}(\Phi m_\la - y_0).
		\end{array}
	\right.
\label{eq-extremal-cdt}
\end{align}
Moreover, given a pair $(m_\la,p_\la)\in \Mm(\TT) \times L^2(\TT)$, if relations~\eqref{eq-extremal-cdt} hold, then $m_\la$ is a solution to Problem~\eqref{eq-initial-pb} and $p_\la$ is the unique solution to Problem~\eqref{eq-initial-dual}.

As for~\eqref{eq-constrained-pbm}, a proof of strong duality is given in Appendix~\ref{sec-auxiliary} (see Proposition~\ref{prop-strong-dual}).
If a solution $p^\star$ to~\eqref{eq-constrained-dual} exists, then it is linked to any solution $m^\star$ of~\eqref{eq-constrained-pbm} by
\begin{align}
	\Phi^* p^\star \in \partial{\normTV{m^\star}},
\label{eq-extremal-constrained}
\end{align}
and similarly, given a pair $(m^\star,p^\star)\in \Mm(\TT)\times L^2(\TT)$, if relation~\eqref{eq-extremal-constrained} hold, then $m^\star$ is a solution to Problem~\eqref{eq-constrained-pbm} and $p^\star$ is a solution to Problem~\eqref{eq-constrained-dual}).

Since finding $\eta = \Phi^* p^\star$ which satisfies~\eqref{eq-extremal-constrained} gives a quick proof
that $m^\star$ is a solution of~\eqref{eq-constrained-pbm}, we call $\eta$ a \textit{dual certificate} for $m^\star$.
We may also use a similar terminology for $\eta_\la=\Phi^* p_\la$ and Problem~\eqref{eq-initial-pb}.

In general, dual certificates for~\eqref{eq-constrained-pbm} are not unique, but we consider in the following definition a specific one, which is crucial for our analysis.

\begin{defn}[Minimal-norm certificate]
	When it exists, the minimal-norm dual certificate associated with~\eqref{eq-constrained-pbm} is defined as $\eta_0=\Phi^* p_0$ where $p_0 \in L^2(\TT)$ is the solution of~\eqref{eq-constrained-dual} with minimal norm, i.e.
\begin{align}
	\eta_0=\Phi^* p_0,&  
	\qwhereq
	p_0=\uargmin{p}
		\enscond{ \norm{p}_2 }{ p \mbox{ is a solution of }\eqref{eq-constrained-dual} }.
\label{eq-min-norm-certif}
\end{align}
\end{defn}
Observe that in the above definition, $p_0$ is well-defined provided there exists a solution to Problem~\eqref{eq-constrained-dual}, since $p_0$ is then the projection of $0$ onto the non-empty closed convex set of solutions.
Moreover, in view of the extremality conditions~\eqref{eq-extremal-constrained}, given any solution $m^\star$ to~\eqref{eq-constrained-pbm}, it may be expressed as 
\begin{align}\label{eq-min-norm-certifbis}
	p_0=\uargmin{ p }
		\enscond{ \norm{ p }_2 }{ \Phi^* p \in \partial{\normTV{m^\star}}  }.
\end{align}

\begin{prop}[Convergence of dual certificates]
Let $p_\la$ be the unique solution of Problem~\eqref{eq-initial-dual}, and $p_0$ be the solution of Problem~\eqref{eq-constrained-dual} with minimal norm defined in~\eqref{eq-min-norm-certif}.
Then 
\begin{align*}
\lim_{\la \to 0^+} p_\la = p_0 \quad \mbox{for the } L^2 \mbox{ strong topology.}
\end{align*}
Moreover the dual certificates $\eta_\la= \Phi^*p_\la$ for Problem~\eqref{eq-initial-pb} converge to the minimal norm certificate $\eta_0 = \Phi^*p_0$. More precisely, 
\begin{align}
	\forall k\in \{0,1,2\}, \quad 
	\lim_{\la \to 0^+} \eta_\la^{(k)} = \eta_0^{(k)},
\end{align}
in the sense of the uniform convergence.
\label{prop-gamma-convergence}
\end{prop}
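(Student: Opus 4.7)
The plan is to exploit the fact that $\mathcal{D}_\lambda(y)$ is a strictly concave quadratic problem on the convex set $C = \{p \in L^2(\TT) : \normi{\Phi^* p} \leq 1\}$, and that $p_0$ is the minimal-norm element of the solution set $S$ of $\mathcal{D}_0(y)$. The strategy splits into (i) proving weak convergence of $p_\lambda$ to $p_0$ via a standard minimum-norm-selection argument, (ii) upgrading to strong convergence via convergence of norms in Hilbert space, and (iii) deducing uniform convergence of $\eta_\lambda^{(k)}$ by Cauchy--Schwarz applied to the convolution structure of $\Phi^*$.

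First I would establish the uniform bound $\norm{p_\lambda}_2 \leq \norm{p_0}_2$. Since $p_\lambda$ maximizes the $\mathcal{D}_\lambda(y)$ functional while $p_0 \in C$, comparing values gives
\[
\dotp{y}{p_\lambda} - \tfrac{\lambda}{2}\norm{p_\lambda}_2^2 \geq \dotp{y}{p_0} - \tfrac{\lambda}{2}\norm{p_0}_2^2;
\]
since $p_0$ already maximizes $\dotp{y}{\cdot}$ over $C$, we have $\dotp{y}{p_\lambda} \leq \dotp{y}{p_0}$, so rearranging yields $\norm{p_\lambda}_2 \leq \norm{p_0}_2$. In particular $(p_\lambda)_{\lambda>0}$ is bounded in $L^2(\TT)$, hence relatively weakly compact. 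Let $(p_{\lambda_n})$ be any weakly convergent subsequence with $\lambda_n \to 0^+$ and weak limit $\tilde p$. Weak-$L^2$ continuity of $\Phi^*$ into $C(\TT)$ (the kernel $\phi$ is continuous, so $\Phi^* p_{\lambda_n}(t) = \int \phi(t-x)p_{\lambda_n}(x)\,dx \to \Phi^*\tilde p(t)$ pointwise) and the closedness of the unit ball of $C(\TT)$ show that $\tilde p \in C$.

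Next I would identify the weak limit. Passing to the limit in the inequality above along $\lambda_n \to 0^+$ gives $\dotp{y}{\tilde p} \geq \dotp{y}{p_0}$, while membership $\tilde p \in C$ forces the reverse inequality; hence $\tilde p \in S$. The lower semicontinuity of the norm under weak convergence then gives $\norm{\tilde p}_2 \leq \liminf \norm{p_{\lambda_n}}_2 \leq \norm{p_0}_2$, so by definition of $p_0$ as the minimal-norm element of $S$ we conclude $\tilde p = p_0$. As every weak cluster point equals $p_0$, the whole net $p_\lambda$ converges weakly to $p_0$. Combining the bound $\limsup \norm{p_\lambda}_2 \leq \norm{p_0}_2$ with the lower semicontinuity bound yields $\norm{p_\lambda}_2 \to \norm{p_0}_2$, and in a Hilbert space weak convergence together with convergence of norms implies strong convergence: $p_\lambda \to p_0$ in $L^2(\TT)$.

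Finally, for the uniform convergence of the certificates and their derivatives, observe that for $k \in \{0,1,2\}$,
\[
\eta_\lambda^{(k)}(t) - \eta_0^{(k)}(t) = \int_\TT \phi^{(k)}(t-x)(p_\lambda - p_0)(x)\,dx,
\]
so by Cauchy--Schwarz and translation invariance on $\TT$,
\[
\bigl|\eta_\lambda^{(k)}(t) - \eta_0^{(k)}(t)\bigr| \leq \norm{\phi^{(k)}}_2 \, \norm{p_\lambda - p_0}_2,
\]
uniformly in $t$, which tends to $0$ since $\phi \in C^2(\TT)$. The main subtlety is step (ii): one must justify that strong $L^2$-convergence follows from weak convergence plus convergence of norms and that no minimal-norm condition is lost in passing to the limit; the rest is a soft compactness argument.
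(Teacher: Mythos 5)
Your proposal is correct and follows essentially the same route as the paper's own proof: the two optimality inequalities giving $\norm{p_\la}_2\leq\norm{p_0}_2$, weak compactness and identification of the cluster point as $p_0$ via feasibility plus minimality of the norm, the upgrade to strong convergence from norm convergence in Hilbert space, and Cauchy--Schwarz for the uniform convergence of $\eta_\la^{(k)}$. No gaps to report.
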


\begin{proof}
Let $p_\la$ be the unique solution of~\eqref{eq-initial-dual}. By optimality of $p_\la$ (resp. $p_0$) for
\eqref{eq-initial-dual} (resp.~\eqref{eq-constrained-dual})
\begin{align}
	\dotp{y_0}{p_\la} - \la \norm{p_\la}_2^2 	& \geq \dotp{y_0}{p_0} -\la \norm{p_0}_2^2, \label{eq-optim-la}\\
	\dotp{y_0}{p_0}  							&\geq  \dotp{y_0}{p_\la}. \label{eq-optim-0}
\end{align}
As a consequence $\norm{p_0}_2^2\geq \norm{p_\la}_2^2$ for all $\la>0$.

Now, let $(\la_n)_{n\in\NN}$ be any sequence of positive parameters converging to $0$. The sequence $p_{\la_n}$  being bounded in $L^2(\TT)$, we may extract a subsequence (denoted $\la_{n'}$) such that $p_{\la_{n'}}$ weakly converges to some $p^\star \in L^2(\TT)$. Passing to the limit in~\eqref{eq-optim-la}, we get 
	$\dotp{y_0}{p^\star} \geq \dotp{y_0}{p_0}$.
Moreover, $\Phi^*p_{\la_n}$ weakly converges to $\Phi^* p^\star$ in $C(\TT)$, so that 
$\normi{\Phi^* p^\star} \leq \liminf_{n'} \normi{\Phi^*p_{\la_{n'}}} \leq 1$, and $p^\star$
  is therefore a solution of~\eqref{eq-constrained-dual}.

But one has 
\eq{
  \norm{p^\star}_2\leq \liminf_{n'} \norm{p_{\la_{n'}}}_2\leq \norm{p_0}_2,
} 
hence $p^\star=p_0$ and in fact $\lim_{{n'}\to +\infty} \norm{p_{\la_{n'}}}_2=\norm{p_0}_2$. As a consequence, $p_{\la_{n'}}$ converges to $p_0$ for the $L^2(\TT)$ strong topology as well. This being true any sequence $\la_n\to 0^+$, we get the result claimed for $p_\la$: assume by contradiction that there exists $\varepsilon_0>0$ and a sequence $\la_n \searrow 0$ such that $\|p_0-p_{\la_n}\|_2 \geq \varepsilon_0$ for all $n\in\NN$. By the above argument we may extract a subsequence $\la_{n'}$ which converges towards $p_0$, which contradicts $\|p_0-p_{\la_n'}\|_2 \geq \varepsilon_0$. Hence $\lim_{\la\to 0}p_\la=p_0$ strongly in $L^2$.
    
It remains to prove the convergence of the dual certificates.
Observing that $\eta_\la^{(k)}(t)=\int \varphi^{(k)}(t-x)p_\la(x) \d x$, we get    
\begin{align*}
  |\eta_\la^{(k)}(t)-\eta_0^{(k)} (t)|&= \absb{ \int_{\TT} \varphi^{(k)}(t-x)(p_\la-p_0)(x) \d x }\\
  &\leq \sqrt{\int_{\TT} |\varphi^{(k)}(t-x)|^2 \d x} \sqrt{\int_{\TT} |(p_\la -p_0)(x)|^2 \d x}\\
	&\leq C \norm{p_\la - p_0}_2,
\end{align*}
where $C>0$ does not depend on $t$ nor $k$, hence the uniform convergence. 
\end{proof}

%%%%%%%%%%%%%%%%%%%%%%%%%%%%%%%%%%%%%%%%%%%%%%%%%%%%%%%%%%%%%
\subsection{Application to the ideal Low-pass filter}

In this paragraph, we apply the above duality results to the particular case of the Dirichlet kernel, defined as
\begin{align}
	\varphi(t) = \sum_{k=-f_c}^{f_c} e^{2i\pi kt} = \frac{\sin \left((2f_c+1)\pi t\right)}{\sin (\pi t)}.
\end{align}
It is well known that in this case the spaces $\Im \Phi$ and $\Im \Phi^*$ are finite-dimensional,
 being the space of real trigonometric polynomials with degree less than or equal to $f_c$.

We first check that a solution to~\eqref{eq-constrained-dual} always exists. As a
consequence, given any measure $m_0$, the minimal norm certificate is well defined. 

\begin{prop}[Existence of $p_0$]
Let $m_0\in \Mm(\TT)$ and $y_0=\Phi m_0\in L^2(\TT)$. There exists a solution of~\eqref{eq-constrained-dual}. As a consequence, $p_0\in L^2(\TT)$ is well defined.
\end{prop}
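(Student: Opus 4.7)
The plan is to exploit the fact that for the Dirichlet kernel the range of $\Phi^*$ is finite-dimensional. I will first reduce the supremum in~\eqref{eq-constrained-dual} to a finite-dimensional closed subspace of $L^2(\TT)$, then apply Weierstrass' theorem to get a maximizer.

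First, I would observe that $\ker \Phi^*$ can be factored out. Writing any $p \in L^2(\TT)$ as $p = p^\perp + p^0$ with $p^\perp \in V := (\ker \Phi^*)^\perp$ and $p^0 \in \ker \Phi^*$, the constraint reads $\normi{\Phi^* p^\perp} \leq 1$. Moreover, using Fubini's theorem,
\begin{align*}
\dotp{y}{p}_{L^2} = \dotp{\Phi m_0}{p}_{L^2} = \int_\TT (\Phi^* p)\,\d m_0,
\end{align*}
so $\langle y, p^0\rangle = 0$ and the supremum in~\eqref{eq-constrained-dual} over $L^2(\TT)$ equals the supremum over $V$.

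Next I would use the structure of the Dirichlet kernel: since $\Phi^* p$ is a real trigonometric polynomial of degree at most $f_c$, the image $\Im \Phi^*$ is finite-dimensional, and so is $V = \overline{\Im \Phi}$ (they coincide up to conjugation because $\varphi$ is real and even). The restriction $\Phi^*|_V : V \to \Im \Phi^*$ is a linear bijection between finite-dimensional spaces, hence a homeomorphism for any choice of norms. In particular, there is a constant $C > 0$ such that $\norm{p}_2 \leq C \normi{\Phi^* p}$ for every $p \in V$, so the constraint set
\begin{align*}
K = \enscond{p \in V}{\normi{\Phi^* p} \leq 1}
\end{align*}
is a bounded, closed subset of the finite-dimensional space $V$, hence compact.

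Finally, the linear functional $p \mapsto \dotp{y}{p}$ is continuous on $V$, so by Weierstrass' theorem it attains its maximum on $K$; any such maximizer is a solution of~\eqref{eq-constrained-dual}, proving existence. The minimal-norm certificate $p_0$ is then the projection of $0$ onto the (non-empty, closed, convex) set of solutions, so $p_0$ is well defined. The only subtlety is being explicit about why the constraint becomes compact once we quotient out the infinite-dimensional kernel of $\Phi^*$; this is the step where the specific assumption on $\varphi$ (finite-dimensional image of $\Phi^*$) is essential, and it is immediate for the Dirichlet kernel.
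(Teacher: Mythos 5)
Your proof is correct and rests on the same key fact as the paper's: for the Dirichlet kernel the problem is effectively finite-dimensional, so a compactness (Weierstrass) argument yields a maximizer, and $p_0$ is then the projection of $0$ onto the nonempty closed convex solution set. The only cosmetic difference is that the paper runs the compactness argument directly on $\eta=\Phi^*p$ in the space of trigonometric polynomials with the $L^\infty$ bound, whereas you first quotient out $\ker\Phi^*$ and argue in $V=(\ker\Phi^*)^\perp$ via norm equivalence; both are valid.
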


\begin{proof}
We rewrite~\eqref{eq-constrained-dual} as 
\eq{
	\usup{\normi{\eta} \leq 1, \eta \in \Im \Phi^*} \dotp{m_0}{\eta}.
}
Let $(\eta_n)_{n\in \NN}$ be any maximizing sequence. Then $(\eta_n)_{n\in \NN}$ is bounded in the finite-dimensional space of trigonometric polynomials with degree $f_c$ or less. We may extract a subsequence converging to $\eta^\star\in C(\TT)$. But $\normi{\eta^\star} \leq 1$ and $\eta^\star\in \Im \Phi^*$,
so that $\eta^\star=\Phi^* p^\star$ for some $p^\star$ solution of~\eqref{eq-constrained-dual}.
\end{proof}

A striking result of~\cite{Candes-toward} is that discrete
 measures are identifiable provided that their support is separated enough, i.e. $\Delta(m_0) \geq \frac{C}{f_c}$ for some $C>0$, where $\Delta(m_0)$ is the so-called minimum separation distance.

\begin{defn}[Minimum separation]
The minimum separation of the support of a discrete measure $m$ is defined as
\eq{
 	\Delta(m)=\inf_{(t,t')\in \supp (m)} |t-t'|,
}
where $|t-t'|$ is the distance on the torus between $t$ and $t'\in \TT$, and we assume $t\neq t'$.
\end{defn}

In~\cite{Candes-toward} it is proved that $C \leq 2$ for complex measures (i.e. of the form $m_{a,x}$ for $a \in \CC^N$ and $x \in \TT^N$) and $C \leq 1.87$ for real measures (i.e. of the form $m_{a,x}$ for $a \in \RR^N$ and $x \in \TT^N$). Extrapolating from numerical simulations on a finite grid, the authors conjecture that for complex measures, one has $C\geq 1$.
 In this section we apply results from Section~\ref{sec-dualcertif} to show that for real measures, necessarily $C\geq \frac{1}{2}$.

We rely on the following theorem, proved by P. Tur\'an~\cite{Turan1946}.

\begin{thm}[Tur\'an]
Let $P(z)$ be a non trivial polynomial of degree $n$ such that $|P(1)|=\max_{|z|=1} |P(z)|$. Then for any root $z_0$ of $P$ on the unit circle, $|\arg (z_0)|\geq \frac{\pi}{n}$. Moreover, if $|\arg (z_0)|= \frac{\pi}{n}$, then $P(z)=c(1+z^n)$ for some $c\in \CC^*$.
 \label{thm-turan}
\end{thm}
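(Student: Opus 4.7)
The plan is to reduce the theorem to a classical Chebyshev-type inequality for real trigonometric polynomials. After dividing $P$ by the nonzero scalar $P(1)$, we may assume $|P(z)|\leq 1$ on the unit circle with equality at $z=1$. Fix a root $z_0=e^{i\theta}$ with $\theta\in(-\pi,\pi]$. The crucial observation is that $f(\phi):=|P(e^{i\phi})|^2$ is a real nonnegative trigonometric polynomial of degree at most $n$ (since $|P(e^{i\phi})|^2=\sum_{j,k=0}^n a_j\bar a_k e^{i(j-k)\phi}$ only involves frequencies in $\{-n,\dots,n\}$), satisfying $0\leq f\leq 1$, $f(0)=1$ and $f(\theta)=0$. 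Consequently $T(\phi):=2f(\phi)-1$ is a real trigonometric polynomial of degree $\leq n$ with $\|T\|_\infty\leq 1$, $T(0)=1$ and $T(\theta)=-1$.

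The heart of the proof is then the Chebyshev-type lemma: if $T$ is a real trigonometric polynomial of degree $\leq n$ with $\|T\|_\infty\leq 1$, $T(a)=1$ and $T(b)=-1$, then $|b-a|\geq \pi/n$ on the circle $\RR/2\pi\ZZ$. I would establish the lemma by contradiction: assume $0<b-a<\pi/n$ and compare $T$ with the candidate $C(\phi):=\cos(n(\phi-a))$. The difference $U:=T-C$ is a trigonometric polynomial of degree $\leq n$, hence has at most $2n$ zeros per period unless it is identically zero. At the Chebyshev nodes $z_k:=a+k\pi/n$, $k=0,\ldots,2n-1$, one has $C(z_k)=(-1)^k$, so the bound $|T|\leq 1$ enforces the sign constraints $(-1)^{k+1}U(z_k)\geq 0$. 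Since $T$ and $C$ both attain the maximum $1$ at $z_0=a$, we have $U(z_0)=U'(z_0)=0$, so $z_0$ is a zero of multiplicity $\geq 2$. Moreover, $U(b)=-1-\cos(n(b-a))<0$ because $n(b-a)\in(0,\pi)$, which forces one extra interior sign change in $(z_0,z_1)$ beyond those produced by alternation at the nodes $z_k$. A careful zero-count then yields at least $2n+1$ zeros of $U$ per period (with multiplicity), contradicting $\deg U\leq n$. Therefore $U\equiv 0$, i.e.\ $T=\cos(n(\cdot-a))$, which gives $\cos(n(b-a))=-1$ and forces $b-a\geq\pi/n$, contrary to the assumption.

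Applying the lemma to $T$ with $a=0$ and $b=\theta$ yields $|\theta|\geq\pi/n$, which is the first assertion. In the equality case $|\theta|=\pi/n$, the lemma additionally forces $T(\phi)=\cos(n\phi)$, i.e.\ $|P(e^{i\phi})|^2=\tfrac{1+\cos(n\phi)}{2}=\tfrac14|1+e^{in\phi}|^2$. Thus $P$ and $\tfrac12(1+z^n)$ are two polynomials of degree $n$ with identical modulus on the unit circle; a standard comparison of their zero sets (both consisting of the $n$th roots of $-1$, all on the unit circle) shows that $P$ differs from $(1+z^n)/2$ only by a unimodular multiplicative constant, giving $P(z)=c(1+z^n)$ for some $c\in\CC^*$. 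The principal technical obstacle is the careful zero-counting step inside the Chebyshev lemma; once that is secured, both the lower bound and the characterization of the equality case follow immediately.
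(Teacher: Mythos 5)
The paper does not prove this theorem: it is quoted as a classical result of Tur\'an with a citation to the 1946 paper, so there is no in-paper argument to compare yours against. Judged on its own, your proof is essentially correct. The reduction is clean: $P(1)\neq 0$ (else $P\equiv 0$ on the circle), so after normalizing, $f(\phi)=|P(e^{i\phi})|^2$ is indeed a real trigonometric polynomial of degree at most $n$ with $0\le f\le 1$, $f(0)=1$, $f(\theta)=0$, and the whole theorem rests on the Chebyshev-type lemma for $T=2f-1$. That lemma is true, and your comparison with $\cos(n(\cdot-a))$ at the nodes $z_k=a+k\pi/n$ is the standard route; the one point that genuinely needs care (and that you correctly flag) is the degenerate situation where $U=T-C$ vanishes at an interior node $z_k$, in which case the zeros attributed to the two adjacent intervals may coincide and one must either argue a multiplicity $\ge 2$ at $z_k$ or run the count on a perturbation such as $T-(1-\epsilon)C$. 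An alternative that avoids this bookkeeping entirely is the Bernstein--Szeg\H{o} inequality $T'(\phi)^2+n^2T(\phi)^2\le n^2\|T\|_\infty^2$, which makes $\arccos(T)$ an $n$-Lipschitz function and yields both the bound $|b-a|\ge\pi/n$ and the equality case $T=\cos(n(\cdot-a))$ at once. Finally, the equality case can be closed even more directly than by comparing moduli: once $|P(e^{i\phi})|^2=\tfrac{1}{2}(1+\cos(n\phi))$, the polynomial $P$ vanishes at all $n$ distinct $n$-th roots of $-1$, and since $\deg P=n$ this already forces $P(z)=c(1+z^n)$.
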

From this theorem we derive necessary conditions for measures that can be reconstructed by~\eqref{eq-constrained-pbm}.

\begin{cor}[Non identifiable measures]
There exists a discrete measure $m_0$ with $\Delta(m_0)=\frac{1}{2f_c}$ such that $m_0$ is not a solution of~\eqref{eq-constrained-pbm} for $y_0=\Phi m_0$.
\end{cor}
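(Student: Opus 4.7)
I would prove this for $f_c\ge 2$ by exhibiting an explicit $m_0$ with $\Delta(m_0)=1/(2f_c)$ that admits no dual certificate. Take $m_0 = \sum_{k=0}^{2f_c-1}\sigma_k\delta_{k/(2f_c)}$ with $\sigma\in\{\pm 1\}^{2f_c}$ neither constant nor strictly alternating (for $f_c=2$ take $\sigma=(+,+,-,-)$; such a pattern exists as soon as $f_c\ge 2$). Clearly $\Delta(m_0)=1/(2f_c)$. To show $m_0$ is not a solution I argue by contradiction using the extremality relation~\eqref{eq-extremal-constrained}: if $m_0$ were optimal, there would exist a dual certificate $\eta=\Phi^* p\in\partial\normTV{m_0}$, i.e.\ a real trigonometric polynomial of degree at most $f_c$ with $\normi{\eta}\le 1$ and $\eta(k/(2f_c))=\sigma_k$ for every $k$.

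Each support point is an interior extremum of $\eta$ at $\pm 1=\pm\normi{\eta}$, so $\eta'$ vanishes there as well; hence $1-\eta^2$ has a double zero at every $k/(2f_c)$, i.e.\ at least $4f_c$ zeros with multiplicity. But $1-\eta^2$ is a real trigonometric polynomial of degree $\le 2f_c$, admitting at most $4f_c$ zeros (with multiplicity) unless identically zero. The case $1-\eta^2\equiv 0$ gives $\eta\equiv\pm 1$ and hence $\sigma$ constant---excluded by construction. Otherwise, by Fej\'er--Riesz one has $1-\eta^2=|q(e^{2i\pi t})|^2$ with $q$ vanishing simply at every $2f_c$-th root of unity, so $q(z)=c(z^{2f_c}-1)$ and $1-\eta^2 = C\sin^2(2\pi f_c t)$ for some constant $C>0$.

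The main obstacle is the rigidity step: ruling out $C\in(0,1)$. I match the Fourier coefficients of $\eta^2=(1-C/2)+(C/2)\cos(4\pi f_c t)$ against those of $\eta=\sum_{|j|\le f_c}c_je^{2i\pi jt}$ (with $c_{-j}=\overline{c_j}$). The mode $\pm 2f_c$ gives $c_{f_c}^2=C/4\ne 0$; descending through the modes $2f_c-1,2f_c-2,\ldots,f_c$ inductively kills $c_{f_c-1},\ldots,c_1,c_0$ (at each step exactly one new coefficient appears multiplied by the nonzero $c_{f_c}$, all other terms having already been annihilated). The mode-$0$ constraint $2|c_{f_c}|^2=1-C/2$ combined with $c_{f_c}^2=C/4$ then forces $c_{f_c}$ to be real and $C=1$, so $\eta=\pm\cos(2\pi f_c t)$. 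But then $\sigma_k=\pm(-1)^k$---the alternating pattern---contradicting our choice of $\sigma$. Alternatively, Tur\'an's Theorem~\ref{thm-turan} applied to $z^{f_c}(1-\eta)$, whose equality case rigidly determines $\eta$, yields the same rigidity. Hence no valid certificate exists, and $m_0$ cannot be a solution.
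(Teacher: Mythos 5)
Your proof is correct, but it follows a genuinely different route from the paper's. The paper uses a three-spike measure $\delta_{-1/(2f_c)}+\delta_0-\delta_{1/(2f_c)}$ and argues via the intermediate value theorem that any certificate $\eta$ must vanish somewhere in $(0,\tfrac{1}{2f_c})$; Tur\'an's Theorem~\ref{thm-turan} then pins that zero to $\tfrac{1}{4f_c}$ and, through its equality case, forces $\eta(t)=\cos(2\pi f_c t)$, contradicting $\eta(-\tfrac{1}{2f_c})=1$. You instead place $2f_c$ spikes on the full half-integer grid with a sign pattern that is neither constant nor alternating, and exploit the fact that each saturation point is an interior extremum, so $1-\eta^2$ acquires $4f_c$ zeros with multiplicity; zero-counting plus the factorization $1-\eta^2=C\sin^2(2\pi f_c t)$ and the Fourier-coefficient descent (which I checked: for modes $2f_c-j$, $1\leq j\leq f_c$, the only surviving term is indeed $2c_{f_c}c_{f_c-j}$, since all indices involved lie in $[f_c-j,f_c]$) rigidly force $\eta=\pm\cos(2\pi f_c t)$ and hence an alternating pattern. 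Your approach is more elementary and self-contained --- it avoids Tur\'an's theorem entirely --- and it actually proves something slightly stronger, namely a complete classification of which sign patterns on the grid $\{k/(2f_c)\}$ admit certificates (only the constant and alternating ones). What it costs is a larger counterexample ($2f_c$ spikes versus three) and a longer rigidity computation; the paper's choice also lets it reuse Tur\'an's theorem for the subsequent corollary on opposite-spike separation. Your restriction to $f_c\geq 2$ is not a real loss: the paper's three points $-\tfrac{1}{2f_c},0,\tfrac{1}{2f_c}$ also collapse on $\TT$ when $f_c=1$, so both constructions implicitly assume $f_c\geq 2$. One presentational caveat: the alternative closing via Tur\'an applied to $z^{f_c}(1-\eta)$ is stated too loosely to stand on its own, but your main argument does not need it.
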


\begin{proof}
Let $m_0=\delta_{-\frac{1}{2f_c}}+ \delta_0 -\delta_{\frac{1}{2f_c}}$, assume by contradiction that $m$ is a solution of~\eqref{eq-constrained-pbm},
 and let $\eta\in C(\TT)$ be an associated dual certificate (which
  exists since $\Im \Phi^*$ is finite-dimensional).
Then necessarily $\eta(-\frac{1}{2f_c})=\eta(0)=1$ and $\eta(\frac{1}{2f_c})=-1$ and by the intermediate value theorem, there exists $t_0\in (0, \frac{1}{2f_c})$ such that $\eta(t_0)=0$.

Writing $\eta(t)=\sum_{k=-f_c}^{f_c}d_k e^{2i\pi kt}$,
  the polynomial $P(z) = \sum_{k=0}^{2f_c}d_{k-f_c}z^k$ satisfies $P(1)=1=\sup_{|z|=1}|P(z)|=|P(e^{\frac{2i\pi}{2f_c}})|$,
   and $P(e^{2i\pi t_0})=0$.

By Theorem~\ref{thm-turan}, we cannot have $|2\pi t_0 - 0 |<\frac{\pi}{2f_c}$ nor $|2\pi t_0-\frac{2\pi}{2f_c}|<\frac{\pi}{2f_c}$, hence $t_0=\frac{1}{4f_c}$ and $P(z)=c(1+z^{2f_c})$, so that $\eta(t)=\cos (2\pi f_ct)$. But this implies $\eta(-\frac{1}{2f_c})=-1$, which contradicts the optimality of $\eta$.
\end{proof}

In a similar way, we may also deduce the following corollary.

\begin{cor}[Opposite spikes separation]
Let $m^\star\in \Mm(\TT)$ be any discrete measure solution of Problem $\Pp_\la(y_0+w)$ or $\Pp_0(y_0)$
 where $y_0=\Phi m_0$ for any data $m_0\in \Mm(\TT)$ and any noise $w\in L^2(\TT)$. If there
  exists $x^\star_0\in \TT$ (resp. $x^\star_1\in \TT$) such that $m^\star(\{x^\star_0\})>0$
   (resp. $m^\star(\{x^\star_1\})<0$), then $|x^\star_0 - x^\star_1 |\geq \frac{1}{2f_c}$.
\end{cor}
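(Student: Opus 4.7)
The plan is to mirror the argument of the preceding corollary, building a dual certificate and invoking Tur\'an's theorem, but applying it twice so as to exploit the two extremal values $+1$ and $-1$ simultaneously.

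First I produce a dual certificate $\eta$: for a solution of $\Pp_\la(y+w)$ take $\eta=\eta_\la=\Phi^*p_\la$ (from the extremality condition~\eqref{eq-extremal-cdt}), while for $\Pp_0(y)$ existence of $\eta\in\Im\Phi^*$ follows from the proposition above since $\Im\Phi^*$ is finite-dimensional. In both cases $\eta$ is a real trigonometric polynomial of degree at most $f_c$ satisfying $\normi{\eta}\le 1$, $\eta(x^\star_0)=+1$ and $\eta(x^\star_1)=-1$. Translating on the torus I may assume $x^\star_0=0$, and I let $s\in(0,1/2]$ be the representative of $x^\star_1$ realizing the torus distance, so that the goal becomes $s\ge 1/(2f_c)$.

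Writing $\eta(t)=\sum_{k=-f_c}^{f_c} d_k e^{2i\pi kt}$, I introduce the algebraic polynomial $P(z)=\sum_{k=0}^{2f_c} d_{k-f_c} z^k$ of degree $2f_c$, so that $|P(e^{2i\pi t})|=|\eta(t)|$ for every $t\in\TT$. Consequently $\max_{|z|=1}|P(z)|=1$, and this maximum is attained both at $z=1$ (where $\eta=+1$) and at $z_1=e^{2i\pi s}$ (where $\eta=-1$). The intermediate value theorem applied to the continuous real-valued function $\eta$ on $[0,s]$ yields some $t_0\in(0,s)$ with $\eta(t_0)=0$, hence $P(e^{2i\pi t_0})=0$.

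Finally I apply Tur\'an's theorem twice. Applied to $P$ itself, whose maximum modulus is attained at $z=1$, it gives $|2\pi t_0|\ge \pi/(2f_c)$, i.e.\ $t_0\ge 1/(4f_c)$. Applied to the rotated polynomial $\tilde P(z):=P(e^{2i\pi s}z)$, which still has degree $2f_c$ and satisfies $|\tilde P(1)|=|P(e^{2i\pi s})|=1=\max_{|z|=1}|\tilde P(z)|$ and vanishes at the unit-circle point $e^{-2i\pi(s-t_0)}$, Tur\'an yields $2\pi(s-t_0)\ge \pi/(2f_c)$, i.e.\ $s-t_0\ge 1/(4f_c)$. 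Adding these two inequalities gives $s\ge 1/(2f_c)$, which is the claimed separation. The only point requiring any care is the verification that the rotated polynomial legitimately satisfies Tur\'an's hypotheses, which is an immediate consequence of the rotation invariance of $\max_{|z|=1}|\cdot|$; there is no substantive obstacle beyond that.
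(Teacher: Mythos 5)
Your proof is correct and follows essentially the route the paper intends: the paper only says the corollary is deduced ``in a similar way'' to the preceding one, and your argument is exactly that Tur\'an-based argument --- extract a dual certificate $\eta$ with $\eta(x_0^\star)=1$, $\eta(x_1^\star)=-1$, find a zero between them by the intermediate value theorem, and apply Tur\'an's theorem at each of the two maximizing points of the associated algebraic polynomial to get $1/(4f_c)$ from each side. No gaps beyond the same harmless technicalities the paper itself glosses over (e.g.\ the exact degree of $P$, which can only improve the bound).
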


% !TEX root = ../DuvalPeyre-SparseSpikes.tex

\section{Noise Robustness}
\label{sec-noise-robust}

This section is devoted to the study of the behavior of solutions to $\Pp_\la(y_0+w)$ for small values of $\la$ and $\norm{w}$. In order to study such regimes, as already defined in~\eqref{eq-constr-set}, we consider sets of the form
\begin{align*}
	D_{\alpha,\la_0}=\enscond{(\la,w)\in \RR_+\times L^2(\TT) }{ 0\leq \la \leq \la_0 \qandq \norm{w}_2\leq \alpha \la  },
\end{align*}
for $\alpha>0$ and $\la_0>0$.

First, we introduce the notion of extended support of a measure. Then we show that this 
concept governs the structure of solutions at small noise regime. After introducing the 
Non Degenerate Source Condition, we state the main result of the paper, \textit{i.e.} that under this assumption,
the solutions of $\Pp_\la(y_0+w)$ have the same number of spikes as the original measure, and that these
spikes converge smoothly to those of the original measure.

%%%%%%%%%%%%%%%%%%%%%%%%%%%%%%%%%%%%%%%%%%%%%
\subsection{Extended signed support}

Our first step in understanding the behavior of solutions to~$\Pp_\la(y_0+w)$ at low noise
 regime is to introduce the notion of extended signed support.
\begin{defn}[Extended signed support]
Let $m_0\in \Mm(\TT)$ such that there exists a solution to~\eqref{eq-constrained-dual}
 (where as usual $y_0=\Phi m_0$), and let $\eta_0\in C(\TT)$ be the associated minimal norm certificate.

The extended support of $m_0$ is defined as:
\begin{align}
  \ext(m_0) = \enscond{ t\in \TT }{ \eta_0(t)=\pm 1 },
  \end{align}
  and the extended signed support of $m_0$ as:
  \begin{align}
  \exts (m_0) = \enscond{ (t,v)\in \TT\times \{+1,-1\} }{ \eta_0(t)=v  }.
  \end{align}
\end{defn}
Notice that $\ext m_0$ and $\exts m_0$ actually depend on $y_0=\Phi m_0$ rather than on $m_0$ itself.
For any measure $m_0\in \Mm(\TT)$, the (signed) support and the extended (signed) support of $m_0$
 are in general not related. Yet, from the optimality conditions~\eqref{eq-extremal-constrained} we observe:
\begin{prop}
Let $m_0\in \Mm(\TT)$ and $y_0=\Phi m_0$ such that there exists a solution to~\eqref{eq-constrained-dual}.
Then:
\begin{itemize}
  \item $m_0$ is a solution to \eqref{eq-constrained-pbm} if and only if $\ssupp m_0 \subset \exts m_0$.
  \item In any case, if $\Phi_{\ext m_0}$ has full rank, the solution to \eqref{eq-constrained-pbm} is unique.
\end{itemize}
\label{prop-extend-solution}
\end{prop}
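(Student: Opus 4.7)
The plan is to deduce both statements directly from the extremality conditions~\eqref{eq-extremal-constrained} together with the explicit description of $\partial \normTV{m}$ for discrete measures given in Section~\ref{sec-preliminaries}. Since a solution to~\eqref{eq-constrained-dual} exists by assumption, the minimal-norm certificate $\eta_0=\Phi^*p_0$ is well-defined, and by dual feasibility $\normi{\eta_0}\leq 1$ automatically.

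For the first equivalence, I would argue in both directions. If $m_0$ solves~\eqref{eq-constrained-pbm}, then the primal-dual pair $(m_0,p_0)$ satisfies~\eqref{eq-extremal-constrained}, so $\eta_0 \in \partial\normTV{m_0}$. Applied to the discrete measure $m_0=\sum_i a_{0,i}\delta_{x_{0,i}}$, the subdifferential formula forces $\eta_0(x_{0,i})=\sign(a_{0,i})$ for each $i$, which is exactly $\ssupp m_0 \subset \exts m_0$. Conversely, if $\ssupp m_0 \subset \exts m_0$, then $\normi{\eta_0}\leq 1$ together with the saturation $\eta_0(x_{0,i})=\sign(a_{0,i})$ gives $\eta_0 \in \partial\normTV{m_0}$; combined with $\Phi m_0=y$, the extremality conditions~\eqref{eq-extremal-constrained} hold for $(m_0,p_0)$, so $m_0$ is a solution of~\eqref{eq-constrained-pbm}.

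For the uniqueness claim, let $m_1,m_2$ be any two solutions of~\eqref{eq-constrained-pbm}. The argument above, applied to each pair $(m_i,p_0)$, yields $\eta_0 \in \partial\normTV{m_i}$, hence $\supp m_i \subset \ext m_0$ for $i=1,2$. The difference $m_1-m_2$ is therefore supported on the (finite) set $\ext m_0=\{t_1,\ldots,t_K\}$, so it may be written as $\sum_{k=1}^K b_k \delta_{t_k}$ for some $b \in \RR^K$, and the constraint $\Phi m_1=\Phi m_2=y$ becomes $\Phi_{\ext m_0}(b)=0$. Full rank, i.e.\ injectivity, of $\Phi_{\ext m_0}$ forces $b=0$, and hence $m_1=m_2$.

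There is no genuine obstacle here: both claims reduce to bookkeeping once the extremality conditions and the subdifferential description are in hand. The only point requiring a little care is that the \emph{same} certificate $\eta_0$ can serve every primal optimum, which is immediate from the fact that~\eqref{eq-extremal-constrained} couples any primal optimum with any dual optimum, and in particular with $p_0$. One should also note that the full-rank hypothesis implicitly presupposes $\ext m_0$ to be finite, so that the operator $\Phi_{\ext m_0}$ defined in~\eqref{eq-notation-Phix} makes sense.
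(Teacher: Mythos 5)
Your proof is correct and follows exactly the route the paper intends: the paper offers no argument beyond ``from the optimality conditions~\eqref{eq-extremal-constrained} we observe,'' and your write-up is precisely that observation made explicit, using the same certificate $\eta_0=\Phi^*p_0$ for both directions and for every primal optimum. Two small points of care: the proposition is stated for arbitrary $m_0\in\Mm(\TT)$, so in the first bullet you should invoke the general subdifferential description ($\normi{\eta_0}\leq 1$ and $\int\eta_0\,\d m_0=\normTV{m_0}$, which by continuity of $\eta_0$ is equivalent to $\eta_0=+1$ on $\supp m_{0,+}$ and $\eta_0=-1$ on $\supp m_{0,-}$) rather than the finite-support formula; and the full-rank hypothesis does not presuppose $\ext m_0$ finite, since the paper defines $\Phi_{\ext m_0}$ as the restriction of $\Phi$ to measures supported on $\ext m_0$, so in the uniqueness step one applies injectivity directly to the measure $m_1-m_2$ without decomposing it into Diracs.
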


Here, following the notation~\eqref{eq-notation-Phix}, we have denoted by $\Phi_{\ext m_0}$ the restriction of $\Phi$ to the space of measures with support in $\ext m_0$.
The link between Proposition~\ref{prop-extend-solution} and the source condition~\cite{Burger_Osher04} is discussed in Section~\ref{sec-source-cdt}

%%%%%%%%%%%%%%%%%%%%%%%%%%%%%%%%%%%%%%%%%%%%%%%%%%%
\subsection{Local behavior of the support}

In this paragraph, we focus on the local properties of the support of solutions to $\Pp_\la(y_0+w)$ at low noise regime. As usual, we denote $y_0=\Phi m_0$ for some $m_0\in \Mm(\TT)$. For now, we make as few assumptions as possible on $m_0$. In particular, we do not assume that $\Phi_{\ext m_0}$ has full rank. Any solution to $\Pp_\la(y_0+w)$ (which is not necessarily unique) is denoted by $\tilde{m}_\la$.

\begin{lem}
Assume that there exists a solution to~\eqref{eq-constrained-dual} and let $\epsilon>0$.
Then there exists $\alpha>0$, $\la_0>0$ such that for all $(\la, w)\in D_{\alpha, \la_0}$, 
  \begin{align}
    \ssupp \tilde{m}_\la \subset \left(\exts m_0\right) \oplus\left((-\epsilon,+\epsilon)\times\{0\}\right),
 \end{align}
 where given two sets $A$ and $B$, $A\oplus B=\enscond{a+b }{ a\in A, b\in B}$ denotes their Minkowski sum.
  \label{lem-robust-boites}
\end{lem}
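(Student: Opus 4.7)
The whole proof will rest on the following uniform approximation statement: for any $\delta>0$ there exist $\alpha>0$ and $\la_0>0$ with $\normi{\eta_{\la,w}-\eta_0}<\delta$ for every $(\la,w)\in D_{\alpha,\la_0}$, where $\eta_{\la,w} = \Phi^* p_{\la,w}$ and $p_{\la,w}$ is the unique solution of the noisy dual problem $\Dd_\la(y+w)$. To get it, I would use the reformulation~\eqref{eq-initial-dualbis}: $p_{\la,w}$ is exactly the $L^2$-projection of $(y+w)/\la$ onto the closed convex set $C=\enscond{p\in L^2(\TT)}{\normi{\Phi^* p}\leq 1}$, and likewise $p_\la$ is the projection of $y/\la$. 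By non-expansiveness of the metric projection,
\[
\norm{p_{\la,w}-p_\la}_2 \;\leq\; \frac{\norm{w}_2}{\la} \;\leq\; \alpha.
\]
Combining this with Proposition~\ref{prop-gamma-convergence} and the Cauchy-Schwarz bound $|\eta_{\la,w}(t)-\eta_0(t)|\leq \norm{\varphi}_2\,\norm{p_{\la,w}-p_0}_2$ (which is exactly the estimate used at the end of the proof of Proposition~\ref{prop-gamma-convergence}), one can first choose $\alpha$ small, then shrink $\la_0$ so that $\normi{\eta_\la-\eta_0}$ is small on $(0,\la_0]$, to guarantee the claimed uniform bound.

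\textbf{Compactness step.} By the extremality condition~\eqref{eq-extremal-cdt}, any primal solution $\tilde{m}_\la$ of $\Pp_\la(y+w)$ is paired with the unique dual solution $p_{\la,w}$ by $\eta_{\la,w}\in \partial\normTV{\tilde{m}_\la}$, so $\eta_{\la,w}(t) = \sign \tilde{m}_\la(\{t\}) = v$ for every $(t,v)\in \ssupp \tilde{m}_\la$. Thus it is enough to show that for $\normi{\eta_{\la,w}-\eta_0}$ small enough, $\{t:\eta_{\la,w}(t)=v\}\subset A_v + (-\epsilon,\epsilon)$ for $v=\pm 1$, where $A_v=\enscond{t\in\TT}{\eta_0(t)=v}$. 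Set $K_v = \TT\setminus(A_v + (-\epsilon,\epsilon))$. This is a compact set (possibly empty), and since $\normi{\eta_0}\leq 1$ the continuous function $t\mapsto v-\eta_0(t)$ does not vanish on $K_v$; compactness therefore gives $c_v := \inf_{t\in K_v}|v-\eta_0(t)|>0$ (with the convention $c_v=+\infty$ when $K_v=\emptyset$). Applying the first paragraph with $\delta := \tfrac{1}{2}\min(c_{+1},c_{-1})>0$, any $(t,v)\in\ssupp \tilde{m}_\la$ satisfies $|v-\eta_0(t)|\leq \normi{\eta_{\la,w}-\eta_0}<c_v$, so $t\notin K_v$, i.e.\ $t\in A_v + (-\epsilon,\epsilon)$, which is exactly the inclusion $(t,v)\in(\exts m_0)\oplus((-\epsilon,\epsilon)\times\{0\})$.

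\textbf{Main obstacle.} The only nontrivial ingredient is the uniform control of $\eta_{\la,w}-\eta_0$ in the cone $\norm{w}_2\leq \alpha\la$: Proposition~\ref{prop-gamma-convergence} handles only the deterministic limit $\la\to 0^+$, and one has to absorb the noise term despite the small denominator $\la$. The non-expansive projection formulation of $p_{\la,w}$ is what cleanly converts the assumption $\norm{w}_2\leq\alpha\la$ into an $L^2$-bound on $p_{\la,w}-p_\la$ that is of order $\alpha$ and independent of $\la$, making the triangle inequality with Proposition~\ref{prop-gamma-convergence} work. Once this is in hand, the rest is just a compactness argument on $\TT$.
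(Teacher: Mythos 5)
Your proposal is correct and follows essentially the same route as the paper: non-expansiveness of the projection in~\eqref{eq-initial-dualbis} to bound $\norm{p_{\la,w}-p_\la}_2$ by $\norm{w}_2/\la\leq\alpha$, Proposition~\ref{prop-gamma-convergence} plus the triangle inequality for the uniform control of $\eta_{\la,w}-\eta_0$, a compactness argument on the complement of the $\epsilon$-enlarged saturation set, and the extremality conditions to transfer the conclusion to $\ssupp\tilde m_\la$. The only (immaterial) difference is that you separate the two signs via $K_{\pm 1}$ and $c_{\pm 1}$, where the paper works with the single compact $K_\epsilon=\TT\setminus(\ext^{+,\epsilon}\cup\ext^{-,\epsilon})$ and the margin $r=1-\sup_{K_\epsilon}|\eta_0|$.
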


In particular, if $\ext m_0$ consists in isolated points $x_{0,1}, \ldots x_{0,N}$, Lemma~\ref{lem-robust-boites} states that  all the mass of $\tilde{m}_\la$ is concentrated in boxes $(x_{i,0}-\epsilon, x_{i,0}+\epsilon)$, where $\epsilon \rightarrow 0$ when $\la, \norm{w} \rightarrow 0$. Moreover, in each box, $\tilde{m}_\la$ has the sign of $\eta_0(x_{0,i})$.

Also, if $\exts m_0=\emptyset$ (i.e. $y=0$), we see that $\tilde{m}_\la=0$ for $\la$ and $\frac{\norm{w}_2}{\la}$ small enough (in fact,
any $\la_0>0$ and $\alpha = \frac{1}{ \norm{\Phi^*}_{2,\infty}}$ suffices, as can be seen from \eqref{eq-extremal-cdt}).

\begin{proof}
We split the proof in several parts.
%%%%%%%%%%%%%%%%
  \paragraph{Behavior of the minimal norm certificate.}
Let us consider the sets:
\begin{align*}
  \ext^+ = \enscond{ t\in \TT }{ \eta_0(t)=1 },\quad & \ext^- = \enscond{ t\in \TT }{ \eta_0(t)=-1 },\\
  \ext^{+,\epsilon} = \ext^+ \oplus (-\epsilon,\epsilon),\quad &  \ext^{-,\epsilon} = \ext^- \oplus (-\epsilon,\epsilon).
\end{align*}
From the uniform continuity of $\eta_0$, for $\epsilon$ small enough, $\eta_0>\frac{1}{2}$ in $\ext^{+,\epsilon}$ and $\eta_0<-\frac{1}{2}$
in $\ext^{-,\epsilon}$, so that $\ext^{+,\epsilon} \cap \ext^{-,\epsilon}=\emptyset$.

If $\ext^{+,\epsilon} \cup  \ext^{-,\epsilon} \subsetneq \TT$, the set $K_\epsilon = \TT \setminus \left(\ext^{+,\epsilon} \cup  \ext^{-,\epsilon}\right)$ being compact, 
$\sup_{K_\epsilon} |\eta_0|<1$. We define $r= 1 -\sup_{K_\epsilon}|\eta_0|$.

If $\ext^{+,\epsilon} \cup  \ext^{-,\epsilon} = \TT$, the connectedness of $\TT$
implies that $\ext^{+,\epsilon} =\TT$ and $\ext^{-,\epsilon}=\emptyset$, or conversely.
In that case we define $r=1$.

In any case, we see that for all $g\in C(\TT)$, if $\norm{g-\eta_0}_\infty < r$, then
\begin{align}
  \enscond{t\in \TT }{ g(t)=1 }\subset \ext^{+,\epsilon} 
  \qandq
  \enscond{ t\in \TT }{ g(t)=-1 }\subset \ext^{-,\epsilon}.
\label{eq-g-proche}
\end{align}

%%%%%%%%%%%%%%%%
\paragraph{Variations of dual certificates.}
Let $p_\la$ be the solution of the noiseless problem~\eqref{eq-initial-dual} and $\tilde{p}_\la$ be the solution of the noisy dual problem $\Dd_\la(y_0+w)$ for $w\in L^2(\TT)$. Since the mapping $\frac{y_0}{\la} \mapsto p_\la$ is a projection onto a convex set (see~\eqref{eq-initial-dualbis}), it is non-expansive, \textit{i.e.} 
\begin{align}
	\norm{p_\la-\tilde{p}_{\la}}_2 \leq \frac{\norm{w}_2}{\la}.
  \label{eq-dual-nonexpansif}
\end{align}

As a consequence, if $\eta_\la = \Phi^* p_\la$ (resp. $\tilde{\eta}_\la = \Phi^* \tilde p_\la$) is the dual certificate of the noiseless (resp. noisy) problem, we have
\begin{align}\label{eq-dual-certif-control}
\norm{\eta_\la -\tilde{\eta}_\la}_\infty \leq M \frac{\norm{w}_2}{\la}
\end{align}
for some $M>0$ (in fact $M=\sqrt{\int_\TT |\varphi(t)|^2dt}= \norm{\Phi^*}_{\infty, 2}$).

From now on, we set $\alpha = \frac{r}{2M}$ and we impose $\frac{\norm{w}_2}{\la}\leq \alpha$.
Writing 
\begin{align*}
	\normi{\eta_0-\tilde{\eta}_\la}  & \leq \normi{\eta_0-\eta_\la} + 
		\normi{\eta_\la-\tilde{\eta}_\la},\\
		&\leq \normi{\eta_0-\eta_\la} + \frac{r}{2},
\end{align*}
we see using Proposition~\ref{prop-gamma-convergence} that for $\la$ small enough $\tilde{\eta}_\la$ satisfies~\eqref{eq-g-proche}.

%%%%%%%%%%%%%%%%
\paragraph{Structure of the reconstructed measure.}

By~\eqref{eq-g-proche} for $g=\tilde{\eta}_\la$ and using the extremality conditions we obtain that
$|\tilde{m}_\la|(K_\epsilon)=0$ and that $\tilde{m}_\la$ (resp. $-\tilde{m}_\la$) is non-negative in $\ext^{+,\epsilon}$ (resp. $\ext^{-,\epsilon}$).
Indeed, the extremality conditions impose that $\tilde{\eta}_\la= \sign \frac{d\tilde{m}_\la}{d|\tilde{m}_\la|}$, $\tilde{m}_\la$-almost everywhere,
hence the claimed result.
\end{proof}

Lemma~\ref{lem-robust-boites} does not make any assumption on the local structure
of $\exts m_0$, and does not provide any information on the local structure of $\tilde{m}_\la$ either (it might even not be discrete).
If we assume that $\eta_0'' (x)\neq 0$ for some $x\in \ext m_0$, then the reconstructed measure
has at most one spike in the neighborhood of $x$.

\begin{lem}
  Assume that there exists a solution to~\eqref{eq-constrained-dual} and that $\eta_0''(x)\neq 0$ for some $x\in \ext m_0$.
  Then for $\epsilon>0$ small enough, there exists $\alpha>0$, $\la_0>0$ such that
  for all $(\la, w)\in D_{\alpha, \la_0}$, the restriction of $\tilde{m}_\la$ to $(x-\epsilon,x+\epsilon)$ is
  \begin{itemize}
    \item either the null measure,
    \item or of the form $\tilde{a}_{\la, w}\delta_{\tilde{x}_{\la,w}}$ 
  where $\sign \tilde{a}_{\la, w}= \eta_0(x)$ and $\tilde{x}_{\la,w}\in (x-\epsilon,x+\epsilon)$.
\end{itemize}
If, in addition, $m_0$ is identifiable and $|m_0|((x-\epsilon,x+\epsilon))\neq 0$, only the second case may happen.
  \label{lem-nondegenerate}
\end{lem}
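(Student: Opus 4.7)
The plan is to upgrade Lemma~\ref{lem-robust-boites}, which only places the signed support of $\tilde{m}_\lambda$ near $\exts m_0$, by exploiting the non-vanishing second derivative of $\eta_0$ to control the local geometry of the noisy certificate $\tilde\eta_\lambda = \Phi^*\tilde p_\lambda$. Without loss of generality, suppose $\eta_0(x) = +1$ (the other case is symmetric). Since $|\eta_0|\leq 1$ on $\TT$, the point $x$ is a local maximum, so $\eta_0'(x)=0$ and $\eta_0''(x)\leq 0$; the nondegeneracy assumption then forces $\eta_0''(x)<0$. In particular $x$ is \emph{strictly} isolated in $\ext m_0$, so by choosing $\epsilon>0$ small enough one has $(x-\epsilon,x+\epsilon)\cap\ext m_0=\{x\}$ and $\eta_0''<\eta_0''(x)/2<0$ uniformly on $[x-\epsilon,x+\epsilon]$.

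Next I would establish the uniform convergence $\tilde\eta_\lambda''\to\eta_0''$ on $\TT$ for $(\lambda,w)\in D_{\alpha,\lambda_0}$ as $\alpha,\lambda_0$ shrink to $0$. This is exactly the argument used to prove~\eqref{eq-dual-certif-control} but applied to $\varphi''$ instead of $\varphi$: since $\varphi\in C^2(\TT)$, we have $\|\tilde\eta_\lambda''-\eta_\lambda''\|_\infty \leq \|\varphi''\|_2\,\|\tilde p_\lambda - p_\lambda\|_2 \leq \|\varphi''\|_2\,\alpha$ by~\eqref{eq-dual-nonexpansif}, and $\eta_\lambda''\to\eta_0''$ uniformly by Proposition~\ref{prop-gamma-convergence}. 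Fixing $\alpha$ small and then $\lambda_0$ small, we obtain $\tilde\eta_\lambda''<0$ on $[x-\epsilon,x+\epsilon]$, so $\tilde\eta_\lambda$ is strictly concave there.

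From Lemma~\ref{lem-robust-boites} applied with this $\epsilon$, after possibly further shrinking $\alpha,\lambda_0$, the support of $\tilde m_\lambda$ on $(x-\epsilon,x+\epsilon)$ is contained in $\{t:\tilde\eta_\lambda(t)=+1\}$ and $\tilde m_\lambda$ is non-negative there. A strictly concave function attains its maximum value at most once, so $\{t\in(x-\epsilon,x+\epsilon):\tilde\eta_\lambda(t)=1\}$ is either empty or a single point $\tilde x_{\lambda,w}$. Hence $\tilde m_\lambda|_{(x-\epsilon,x+\epsilon)}$ is either zero or of the form $\tilde a_{\lambda,w}\delta_{\tilde x_{\lambda,w}}$ with $\tilde a_{\lambda,w}>0$, giving the dichotomy.

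For the final assertion, assume $m_0$ is identifiable and $|m_0|((x-\epsilon,x+\epsilon))\neq 0$. By Proposition~\ref{prop-extend-solution}, $\supp m_0\subset \ext m_0$, and by our choice of $\epsilon$ this forces the only possible atom of $m_0$ in $(x-\epsilon,x+\epsilon)$ to be at $x$, with $m_0(\{x\})\neq 0$. Identifiability together with the bound $\|w\|_2^2/\lambda\leq \alpha^2\lambda\to 0$ ensures weak-$*$ convergence $\tilde m_\lambda\rightharpoonup m_0$ (the result of~\cite{Bredies-space-measures} recalled in the introduction). Testing against any $\psi\in C(\TT)$ supported in $(x-\epsilon,x+\epsilon)$ with $\psi(x)=1$ gives $\int\psi\,d\tilde m_\lambda\to m_0(\{x\})\neq 0$, so $\tilde m_\lambda$ cannot vanish on the interval for $(\lambda,w)$ small enough, ruling out the first alternative. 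The main technical point is really the upgrade of Proposition~\ref{prop-gamma-convergence} to $\tilde\eta_\lambda''$ in the noisy regime; once that is secured, strict concavity collapses the support to a single point.
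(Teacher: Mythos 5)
Your proof is correct and follows essentially the same route as the paper's: observe that $\eta_0''(x)<0$ (resp. $>0$) forces $x$ to be isolated in $\ext m_0$, transfer the sign of the second derivative to $\tilde\eta_\la''$ via the non-expansiveness bound \eqref{eq-dual-nonexpansif} together with Proposition~\ref{prop-gamma-convergence}, use strict concavity to reduce the saturation set to at most one point, and invoke the weak-* convergence of $\tilde m_\la$ to $m_0$ to exclude the null measure in the identifiable case. The only (harmless) cosmetic difference is that you rule out the first alternative by testing against a continuous bump function rather than directly asserting convergence of $\tilde m_\la((x-\epsilon,x+\epsilon))$.
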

\begin{proof}
  The proof follows the same steps as those of Lemma~\ref{lem-robust-boites}.

%%%%%%%%%%%%%%%%
  \paragraph{Behavior of the minimal norm certificate.}
  First, observe that if $\eta_0''(x)\neq 0$  and $\eta_0(x)=1$ (resp. $-1$) for $x\in \ext m_0$, then $\eta_0''(x)<0$ (resp. $>0$).
  As a consequence, $x$ is an isolated point of $\ext m_0$.
  For $\epsilon>0$ small enough, $\ext m_0\cap (x-\epsilon,x+\epsilon)=\{x\}$
  and $|\eta_0''(t)|\geq \frac{|\eta_0''(x)|}{2}>0$ for all $t\in (x-\epsilon,x+\epsilon)$.

%%%%%%%%%%%%%%%%
\paragraph{Variations of dual certificates.}
From~\eqref{eq-dual-nonexpansif}, we infer that 
\begin{align}\label{eq-dual-certif-control2}
\norm{\eta_\la'' -\tilde{\eta}_\la''}_\infty \leq M \frac{\norm{w}_2}{\la}
\end{align}
with $M>0$ (here $M=\sqrt{\int_\TT |\varphi''(t)|^2 \d t}= \norm{(\Phi'')^*}_{\infty, 2}$).

We set $\alpha = \frac{r}{2M}$ with $r=\frac{|\eta_0''(x)|}{2}$ and we impose $\frac{\norm{w}_2}{\la}\leq \alpha$, so that
\begin{align*}
	\normi{\eta_0''-\tilde{\eta}_\la''}  & \leq \normi{\eta_0''-\eta_\la''} + 
		\normi{\eta_\la''-\tilde{\eta}_\la''},\\
		&\leq \normi{\eta_0''-\eta_\la''} + \frac{r}{2},
\end{align*}
thus $\normi{\eta_0''-\tilde{\eta}_\la''}<\frac{|\eta_0''(x)|}{2}$ for $\la$ small enough.

%%%%%%%%%%%%%%%%
\paragraph{Structure of the reconstructed measure.}
From the above inequality, we know that $\tilde{\eta}_\la$ is strictly concave (resp. strictly convex) in $(x-\epsilon,x+\epsilon)$.
As a result, there is at most one point $\tilde{x}_{\la, w}$  in $(x-\epsilon,x+\epsilon)$ such that $\tilde{\eta}_\la(\tilde{x}_{\la, w})=1$ (resp. $-1$).

If $m_0$ is identifiable, it remains to prove that there is indeed one spike in $(x-\epsilon,x+\epsilon)$.
This is obtained by relying on a result by Bredies and Pikkarainen~\cite{Bredies-space-measures} which is an application of~\cite[Th. 3.5]{Hofmann-measures}.
It guarantees that $\tilde{m}_{\la}$ converges to $m$ for the weak-* topology when $\la, \norm{w}_2\to 0$. 
We recall the result below (see Proposition~\ref{prop-hoffman-cvweak}) for the convenience of the reader.

By weak-* convergence of $\tilde{m}_\la$ to $m$ for $\la\to 0^+$ and $\norm{w}_2\to 0$, $\tilde{m}_{\la}((x-\epsilon,x+\epsilon))$ must converge to $m_0((x-\epsilon,x+\epsilon))$.
By the optimality conditions, we see that $|m_0((x-\epsilon,x+\epsilon))|=|m_0(\{x\})|$, so that $m_0(\{x\})\neq 0$ and $\sign m_0(\{x\})=\eta_0(x)$,
hence the result.
\end{proof}

In the proof of Lemma~\ref{lem-nondegenerate} we have relied on the following result.

\begin{prop}[\protect{\cite[Th. 3.5]{Hofmann-measures},\cite[Prop.~5]{Bredies-space-measures}}]
Let $m_0$ be an identifiable measure, if $\la \to 0$ and $\norm{w}\to 0$ with $\frac{\norm{w}_2^2}{\la}\to 0$, then
$\tilde{m}_\la$ converges to $m_0$ with respect to the weak-* topology.
\label{prop-hoffman-cvweak}
\end{prop}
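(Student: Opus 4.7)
The approach is the standard compactness-plus-identifiability argument for inverse problems regularized by total variation; the scaling hypothesis $\|w\|_2^2/\la\to 0$ is exactly what is needed to turn the optimality inequality into a useful bound on $\|\tilde m_\la\|_{TV}$. The plan is to produce, by energy estimates, a bounded sequence of measures whose images converge strongly to $y$, extract a weak-* cluster point, identify it via identifiability of $m_0$, and finally upgrade subsequential convergence to full convergence.

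First I would write down the optimality inequality coming from testing $\tilde m_\la$ against the admissible competitor $m_0$ in $\Pp_\la(y+w)$:
\[
	\tfrac12\|\Phi\tilde m_\la - (y+w)\|_2^2 + \la\|\tilde m_\la\|_{TV} \;\leq\; \tfrac12\|w\|_2^2 + \la\|m_0\|_{TV}.
\]
Dividing by $\la$ yields $\|\tilde m_\la\|_{TV} \leq \|m_0\|_{TV} + \tfrac{\|w\|_2^2}{2\la}$, so the assumption $\|w\|_2^2/\la\to 0$ gives $\limsup \|\tilde m_\la\|_{TV} \leq \|m_0\|_{TV}$. The same inequality also gives $\|\Phi\tilde m_\la - (y+w)\|_2 \leq \sqrt{\|w\|_2^2 + 2\la\|m_0\|_{TV}}$, and combining with $\|w\|_2\to 0$ yields $\Phi\tilde m_\la \to y$ strongly in $L^2(\TT)$. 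These two bounds are the core analytic content.

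Next, fix any sequence $(\la_n,w_n)$ satisfying the hypotheses. Since $(\tilde m_{\la_n})_n$ is bounded in $\Mm(\TT)$, and since bounded sets of $\Mm(\TT)$ are weak-* compact (Banach-Alaoglu) and metrizable (by separability of $C(\TT)$, as recalled in Section~\ref{sec-preliminaries}), I extract a subsequence, still denoted $(\tilde m_{\la_n})$, weak-* converging to some $m^\star\in\Mm(\TT)$. The weak-* to weak continuity of $\Phi$ (Section~\ref{sec-preliminaries}) gives $\Phi\tilde m_{\la_n} \rightharpoonup \Phi m^\star$ in $L^2(\TT)$; combined with the strong convergence $\Phi\tilde m_{\la_n}\to y$ from Step~1, uniqueness of the weak limit forces $\Phi m^\star = y$. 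The weak-* lower semi-continuity of $\|\cdot\|_{TV}$, immediate from the supremum representation~\eqref{eq-def-tv}, gives $\|m^\star\|_{TV} \leq \liminf_n \|\tilde m_{\la_n}\|_{TV} \leq \|m_0\|_{TV}$. Thus $m^\star$ is admissible for $\Pp_0(y)$ and achieves a value no larger than that of $m_0$, so it is a solution of $\Pp_0(y)$; identifiability of $m_0$ then yields $m^\star = m_0$.

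Finally, every subsequence of $(\tilde m_{\la_n})_n$ has a further subsequence weak-* convergent to the same limit $m_0$, which in the metrizable closed ball containing the sequence implies that the full sequence converges weak-* to $m_0$. There is no serious obstacle: the delicate point is only the rate assumption $\|w\|_2^2/\la \to 0$, which is precisely what is needed to close the total-variation bound and hence enable the compactness step; everything else is bookkeeping with the tools already collected in Section~\ref{sec-preliminaries}.
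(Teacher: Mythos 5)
Your proof is correct and follows exactly the standard compactness-plus-identifiability scheme: the paper itself gives no proof of this proposition, citing it instead from \cite[Th.~3.5]{Hofmann-measures} and \cite[Prop.~5]{Bredies-space-measures}, and the argument in those references is the one you give (optimality inequality against the competitor $m_0$, the bound $\normTV{\tilde m_\la}\leq \normTV{m_0}+\norm{w}_2^2/(2\la)$ which is where the scaling hypothesis enters, strong convergence of $\Phi\tilde m_\la$ to $y$, weak-* subsequence extraction via Banach--Alaoglu and metrizability of bounded sets, identification of the limit by lower semicontinuity and uniqueness of the solution of $\Pp_0(y)$, and the subsequence-of-subsequence upgrade). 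Nothing is missing.
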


%%%%%%%%%%%%%%%%%%%%%%%%%%%%%%%%%%%%%%%%%%%%%%
\subsection{Non Degenerate Source Condition}
\label{sec-source-cdt}

The notion of extended signed support has strong connections with the source condition introduced in~\cite{Burger_Osher04} to derive convergence rates for the Bregman distance.

\begin{defn}[Source Condition]
	A measure $m_0$ satisfies the source condition if there exists $p\in L^2(\TT)$ such that
	\eq{
		\Phi^* p \in \partial{\normTV{m_0}}.
	}
  \label{def-source-cdt}
\end{defn}

In a finite-dimensional framework, the source condition is simply equivalent to the optimality of 
$m_0$ for~\eqref{eq-constrained-pbm} given $y_0=\Phi m_0$. In the framework of Radon measures,
the source condition amounts to assuming that $m_0$ is a solution of~\eqref{eq-constrained-pbm} \textit{and} that there exists a solution to~\eqref{eq-constrained-dual}. 
In fact, the source condition simply means that the conditions of Proposition~\ref{prop-extend-solution} hold.

If one is interested in $m_0$ being the \textit{unique} solution of~\eqref{eq-constrained-pbm} for $y_0=\Phi m_0$ (in which case we say that $m_0$ is \textit{identifiable}), the source condition may be strengthened to give a sufficient condition.
 
\begin{prop}[\protect{\cite[Lemma 1.1]{deCastro-beurling}}]\label{prop-unique-sol}
Let $m_0 = m_{x_0,a_0}$ be a discrete measure. If $\Phi_{x_0}$ has full rank, and if
\begin{itemize}
	\item there exists $\eta \in \Im \Phi^*$ such that $\eta\in \partial{\normTV{m_0}}$,
	\item $\foralls s \notin \supp(m_0), \quad |\eta(s)| < 1$,
\end{itemize}
then $m_0$ is the unique solution of~\eqref{eq-constrained-pbm}.
\end{prop}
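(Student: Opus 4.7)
The plan is to start with an arbitrary solution $m^\star$ of $\eqref{eq-constrained-pbm}$ and show two things in succession: first, that $\supp(m^\star)\subset \supp(m_0)$, using the certificate $\eta$ together with the strict inequality $|\eta(s)|<1$ off the support; and second, that the amplitudes must then match, using the full-rank hypothesis on $\Phi_{x_0}$.

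For the first step, I would exploit the saturation identity $\int \eta\,\mathrm{d}m^\star = \normTV{m^\star}$. To obtain this identity, write $\eta=\Phi^*p$ for some $p\in L^2(\TT)$, and use that $\Phi m^\star = \Phi m_0 = y$ to compute
\begin{equation*}
\int \eta\,\mathrm{d}m^\star = \langle p,\Phi m^\star\rangle = \langle p,\Phi m_0\rangle = \int \eta\,\mathrm{d}m_0 = \normTV{m_0},
\end{equation*}
where the last equality uses $\eta\in\partial\normTV{m_0}$. Since $m^\star$ is optimal and $m_0$ is feasible, $\normTV{m^\star}=\normTV{m_0}$, which yields the claimed saturation.

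The main technical step is then to deduce $\supp(m^\star)\subset \supp(m_0)$ from this saturation. Decompose $m^\star = m^\star_{\mathrm{on}}+m^\star_{\mathrm{off}}$, where $m^\star_{\mathrm{on}}$ is the restriction of $m^\star$ to the finite set $\supp(m_0)$ and $m^\star_{\mathrm{off}}$ its restriction to the open complement $U = \TT\setminus\supp(m_0)$, so that $\normTV{m^\star}=\normTV{m^\star_{\mathrm{on}}}+\normTV{m^\star_{\mathrm{off}}}$. The estimate $\int \eta\,\mathrm{d}m^\star_{\mathrm{on}}\leq \normTV{m^\star_{\mathrm{on}}}$ follows from $\normi{\eta}\leq 1$. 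The delicate point is the strict bound $\bigl|\int \eta\,\mathrm{d}m^\star_{\mathrm{off}}\bigr|<\normTV{m^\star_{\mathrm{off}}}$ whenever $m^\star_{\mathrm{off}}\neq 0$: pointwise strictness of $|\eta|<1$ on $U$ does not immediately upgrade to a uniform bound, but inner regularity of the Radon measure $|m^\star_{\mathrm{off}}|$ provides a compact $K\subset U$ carrying positive mass, and on $K$ the continuity of $\eta$ yields $\max_K|\eta|<1$; splitting the integral over $K$ and $U\setminus K$ then gives the strict inequality. This contradicts saturation unless $m^\star_{\mathrm{off}}=0$.

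Once $\supp(m^\star)\subset \{x_{0,1},\ldots,x_{0,N}\}$, write $m^\star=m_{x_0,a^\star}$ for some $a^\star\in\RR^N$. The constraint $\Phi m^\star = \Phi m_0$ reads $\Phi_{x_0} a^\star = \Phi_{x_0} a_0$ in the notation~\eqref{eq-notation-Phix}, and since $\Phi_{x_0}$ has full (column) rank it is injective, so $a^\star = a_0$ and $m^\star = m_0$. I expect the only subtle point to be the strict-inequality argument for $m^\star_{\mathrm{off}}$; everything else is a direct application of the extremality condition~\eqref{eq-extremal-constrained} and the full-rank hypothesis.
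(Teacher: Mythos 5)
Your proof is correct. Note that the paper itself does not prove this proposition --- it imports it from \cite[Lemma 1.1]{deCastro-beurling} --- so there is no in-text argument to compare against; your proof is the standard one for this kind of statement and matches the cited reference's strategy. The three ingredients are all in place: the saturation identity $\int\eta\,\mathrm{d}m^\star=\normTV{m_0}=\normTV{m^\star}$ obtained by combining the adjoint relation $\int\Phi^*p\,\mathrm{d}m^\star=\langle p,\Phi m^\star\rangle$ with feasibility and optimality; the passage from pointwise strictness $|\eta|<1$ on the open complement of $\supp(m_0)$ to a strict integral inequality via inner regularity and compactness (this is indeed the one point that requires care, and you handle it properly); and injectivity of $\Phi_{x_0}$ to identify the amplitudes.
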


In this paper, in view of Lemma~\ref{lem-nondegenerate}, we strengthen a bit more the Source Condition so as to derive 
a global stability result concerning the support of the solutions of $\Pp(y_0+w)$
(see Theorem~\ref{thm-noise-robustness}).

\begin{defn}[Non Degenerate Source Condition]
Let $m_0=m_{x_0,a_0}$ be a discrete measure, and $\{x_{0,1},\ldots x_{0,N}\}=\supp m_0$. We say that $m_0$ satisfies the Non Degenerate Source Condition (NDSC) if
\begin{itemize}
\item there exists $\eta \in \Im \Phi^*$ such that $\eta \in \partial{\normTV{m_0}}$.

\item the minimal norm certificate $\eta_0$ satisfies
\begin{align*}
	&\foralls s \in \TT\setminus \{x_{0,1},\ldots x_{0,N}\}, \quad &|\eta_0(s)| < 1, \\
	&\foralls i\in \{1,\ldots N\}, \quad &\eta_0''(x_{0,i}) \neq 0.
\end{align*}
\end{itemize}
In that case, we say that $\eta_0$ is not degenerate.
\label{def-ndsc}
\end{defn}
The first assumption in the above definition is the standard Source Condition.
The last two assumptions impose conditions on the extended signed support, namely that $\ssupp m_0 = \exts (m_0)$ and 
$\eta_0''(t)\neq 0$ for all $t\in \supp m_0$.

When $\Phi$ is an ideal low-pass filter with cutoff frequency $f_c$, there are numerical evidences that measures having a large enough separation distance (proportional to $f_c$) satisfy the non degenerate source condition, see Section~\ref{sec-vanishing}.

%%%%%%%%%%%%%%%%%%%%%%%%%%%%%%%%%%%%%%%%%%%%%%%%%%%%%%%%%%%%%%%%%%%%%%%%%%%%%%%%%%%%%%%
\subsection{Main Result}

The following theorem, which is the main result of this paper, gives a global result on
the precise structure of the solution when the signal-to-noise ratio is large enough and $\la$ is small enough.

\begin{thm}[Noise robustness]
Let $m_0=m_{a_0,x_0}=\sum_{i=1}^N a_{0,i} \delta_{x_{0,i}}$ be a discrete measure.
Assume that $\Ga_{x_0}$ (defined in~\eqref{eq-gammax}) has full rank and that $m_0$ satisfies the Non Degenerate Source Condition.

Then there exists $\alpha>0, \la_0>0$, such that for $(\la,w) \in D_{\alpha,\la_0}$, the solution $\tilde{m}_\la$ of $\Pp_\la(y+w)$ is unique and is composed of exactly $N$ spikes. 

Moreover, up to a permutation of indices, we may write $\tilde{m}_\la=\sum_{i=1}^N \tilde{a}_{\la, i}\delta_{\tilde{x}_{\la,i}}$ with $\tilde{a}_{\la, i} \neq 0$ and $\sign(\tilde{a}_{\la, i})=\sign(a_{0, i})$ (for $1\leq i \leq N$), and writing $(\tilde{a}_0,\tilde{x}_0)=(a_0,x_0)$, the mapping 
\eq{
	(\lambda,w) \in D_{\alpha,\la_0} \mapsto (\tilde{a}_\la, \tilde{x}_\la ) \in \RR^N\times \TT^N, 
}
is $C^{k-1}$ whenever $\phi\in C^{k}(\TT)$ ($k\geq 2$).

In particular, for $\la =\frac{1}{\alpha}\norm{w}_2$, we have 
\begin{align}
 \forall i\in \{1,\ldots N\},\quad 
 |\tilde{x}_{\la,i}-x_{0,i}|=O(\norm{w}_2) \qandq |\tilde{a}_{\la,i}-a_{0,i}|=O(\norm{w}_2).
\end{align}
\label{thm-noise-robustness}
\end{thm}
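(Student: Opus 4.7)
The strategy is to apply the implicit function theorem (IFT) to the first-order optimality system, then use Lemmas~\ref{lem-robust-boites} and~\ref{lem-nondegenerate} to show that the resulting candidate exhausts all minimizers. Writing $s_i = \sign(a_{0,i})$, I encode the first-order conditions $\tilde\eta_\la(x_i)=s_i$ and $\tilde\eta_\la'(x_i)=0$ in a $C^{k-1}$ map $F:\RR^N\times\TT^N\times\RR\times L^2(\TT)\to\RR^{2N}$ whose $i$-th pair of components reads
\begin{align*}
F_i^1(a,x,\la,w) &= \Phi^*(\Phi m_{a,x}-y-w)(x_i)+\la s_i,\\
F_i^2(a,x,\la,w) &= [\Phi^*(\Phi m_{a,x}-y-w)]'(x_i).
\end{align*}
Since $\Phi m_0 = y$, $F$ vanishes at the base point $(a_0,x_0,0,0)$. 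A direct computation of the partial derivatives there---in which the terms stemming from the double dependence on $x_i$ drop out because $\Phi^*(\Phi m_0-y)$ vanishes identically---yields the factorization
\[
\partial_{(a,x)} F \,\big|_{(a_0,x_0,0,0)} \;=\; \Gamma_{x_0}^*\,\Gamma_{x_0} \cdot \begin{pmatrix} I_N & 0 \\ 0 & \mathrm{diag}(a_0) \end{pmatrix},
\]
with $\Gamma_{x_0}$ as in~\eqref{eq-gammax}. Because $\Gamma_{x_0}$ has full column rank and $a_{0,i}\neq 0$, this Jacobian is invertible, so the IFT in the Banach setting $\RR\times L^2(\TT)$ (note $F$ is affine in $w$) produces a $C^{k-1}$ map $(\la,w)\mapsto(\tilde a_\la,\tilde x_\la)$ on a neighborhood of $(0,0)$, with $F(\tilde a_\la,\tilde x_\la,\la,w)=0$ and $(\tilde a_0,\tilde x_0)=(a_0,x_0)$.

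\textbf{Global optimality.} For $(\la,w)$ close to $(0,0)$, continuity gives $\sign(\tilde a_{\la,i})=s_i$ and distinct $\tilde x_{\la,i}$, so $\tilde m_\la:=m_{\tilde a_\la,\tilde x_\la}$ is a bona fide $N$-spike measure. Set $\tilde p_\la=(y+w-\Phi\tilde m_\la)/\la$ and $\tilde\eta_\la=\Phi^*\tilde p_\la$; by construction $\tilde\eta_\la(\tilde x_{\la,i})=s_i$ and $\tilde\eta_\la'(\tilde x_{\la,i})=0$, so in view of the extremality conditions~\eqref{eq-extremal-cdt} it remains only to establish $\normi{\tilde\eta_\la}\le 1$. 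Combining~\eqref{eq-dual-certif-control} and~\eqref{eq-dual-certif-control2} with Proposition~\ref{prop-gamma-convergence}, $\tilde\eta_\la\to\eta_0$ in $C^2(\TT)$ uniformly over $\norm{w}_2/\la\le\alpha$ as $\la\to 0^+$ (by taking $\alpha$ small and then $\la_0$ small). By NDSC, $|\eta_0|<1$ on the compact set $K_\epsilon:=\TT\setminus\bigcup_i(x_{0,i}-\epsilon,x_{0,i}+\epsilon)$ for $\epsilon>0$ small enough, and $\eta_0''(x_{0,i})$ has sign $-s_i$. Hence, after shrinking $\alpha,\la_0$: on $K_\epsilon$ one has $|\tilde\eta_\la|<1$ by uniform closeness; on each interval $(x_{0,i}-\epsilon,x_{0,i}+\epsilon)$, $\tilde\eta_\la''$ has constant sign $-s_i$, making $\tilde\eta_\la$ strictly concave or convex with unique extremum at $\tilde x_{\la,i}$ of value $s_i=\pm 1$, whence $|\tilde\eta_\la|\le 1$ there as well. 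Therefore $(\tilde m_\la,\tilde p_\la)$ satisfies~\eqref{eq-extremal-cdt} and $\tilde m_\la$ solves~\eqref{eq-initial-pb}.

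\textbf{Uniqueness and rate.} Let $\bar m_\la$ be any solution of~\eqref{eq-initial-pb} for $(\la,w)\in D_{\alpha,\la_0}$. By NDSC, $\exts m_0$ consists of the $N$ isolated points $(x_{0,i},s_i)$, so Lemma~\ref{lem-robust-boites} confines $\ssupp\bar m_\la$ to small boxes around them. Lemma~\ref{lem-nondegenerate} then caps the number of spikes per interval at one (with the correct sign), while Proposition~\ref{prop-hoffman-cvweak} (applicable because $\norm{w}_2^2/\la\le\alpha\norm{w}_2\to 0$) yields $\bar m_\la\rightharpoonup m_0$ and hence forces exactly one spike per interval. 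Therefore $\bar m_\la=m_{\bar a_\la,\bar x_\la}$ satisfies $F=0$, and local uniqueness in the IFT forces $\bar m_\la=\tilde m_\la$. Finally, the local Lipschitz property of the $C^{k-1}$ map $(\la,w)\mapsto(\tilde a_\la,\tilde x_\la)$, evaluated along $\la=\alpha^{-1}\norm{w}_2$, yields the claimed $O(\norm{w}_2)$ rates. I expect the main obstacle to be the orchestration of the three ingredients---the IFT candidate, the $C^2$-closeness argument promoting the local extremum condition into the global bound $\normi{\tilde\eta_\la}\le 1$, and the local lemmas that preclude alternative minimizers---each of which rests essentially on the strict non-vanishing of $\eta_0''$ on $\supp m_0$ mandated by NDSC.
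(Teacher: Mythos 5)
Your proof follows essentially the same route as the paper: the same implicit system (the paper's $E_{s_0}$ is your $F$), the same Jacobian factorization $(\Ga_{x_0}^*\Ga_{x_0})\,\mathrm{diag}(\Id,\diag(a_0))$, and the same use of Lemmas~\ref{lem-robust-boites} and~\ref{lem-nondegenerate} together with Proposition~\ref{prop-hoffman-cvweak} to confine every minimizer to one spike of the correct sign per interval, after which IFT local uniqueness identifies it with the implicit branch. The only difference is the order: the paper establishes the structure of the (existing) minimizers first and invokes the IFT only to get uniqueness and smoothness of the parameters, whereas you first build the IFT candidate and then try to certify it.

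That extra certification step is where you have a problem. You set $\tilde p_\la=(y+w-\Phi\tilde m_\la)/\la$ from the IFT candidate and then invoke \eqref{eq-dual-certif-control} and \eqref{eq-dual-certif-control2} to conclude $\tilde\eta_\la\to\eta_0$ in $C^2$. But those estimates are derived from the non-expansiveness of the projection in \eqref{eq-initial-dualbis} and therefore apply to the \emph{actual dual solutions} of $\Dd_\la(y+w)$, not to the residual of a candidate whose optimality is precisely what you are trying to prove; as written the argument is circular. Controlling $\Phi^*\bigl((y+w-\Phi m_{f(\la,w)})/\la\bigr)$ directly is not free: in the noiseless case the paper needs the integral-remainder computation of Proposition~\ref{prop-vanish-certif} to show this quantity converges to $\eta_V$, and a noisy, $C^2$-uniform version over $\norm{w}_2\le\alpha\la$ would require an additional argument (the parameter $w$ ranges over an infinite-dimensional ball, so one cannot just differentiate along a fixed direction). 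Fortunately the step is redundant: existence of minimizers of $\Pp_\la(y+w)$ is known (cf.~\cite{Bredies-space-measures}), and your final paragraph already shows that \emph{any} minimizer has parameters solving $F=0$ in the IFT neighborhood, hence coincides with the implicit branch. Deleting the ``global optimality'' paragraph, or replacing it by an appeal to existence plus your uniqueness argument, yields exactly the paper's proof.
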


\begin{proof}
  Applying Lemma~\ref{lem-nondegenerate} at each point $x_{0,i}$ for $1\leq i\leq N$ and Lemma~\ref{lem-robust-boites},
  we see that for $\epsilon>0$ small enough, there exists $\alpha>0$, $\la_0>0$ such that $\tilde{m}_\la$ has at most one spike in each interval $(x_{i,0}-\epsilon,x_{i,0}+\epsilon)$, and 
\eq{
  	|\tilde{m}_\la|\left(\TT\setminus\bigcup_{i=1}^N (x_{i,0}-\epsilon,x_{i,0}+\epsilon)\right)=0.
}

  In fact, since $\Ga_{x_0}$ has full rank, $\Phi_{\ext m_0}$ has full rank as well and $m_0$ is identifiable (by Proposition~\ref{prop-extend-solution}).
  Therefore, Lemma~\ref{lem-nondegenerate} ensures that there is indeed one spike in each interval, with sign equal to $\eta_0(x_{0,i})$.

%%%%%%%%%%%%%%%%
It remains to prove the uniqueness of the amplitudes and locations $(\tilde{a}_\la,\tilde{x}_\la)$ and their smoothness as function of $(\la, w)$.
To this end, we observe that they satisfy the following implicit equation
\eq{
	E_{s_0}(\tilde{a}_\la,\tilde{x}_\la,\la,w)=0
}
where $s_0=\sign (a_0)=(\eta_0(x_{i,0}))_{1\leq i\leq N}$, and  
\eq{
	E_{s_0}(a,x,\la,w) = 
	\begin{pmatrix}
		\Phi_x^* ( \Phi_x a - y_0 - w ) + \la s_0 \\
		{\Phi'_x}^* ( \Phi_x a - y_0 - w ) 
	\end{pmatrix}
	= 
	\Ga_x^*
	( \Phi_x a - y_0 - w ) 
	+ \la
	\begin{pmatrix}
		s_0\\
		0
	\end{pmatrix}.
}
Indeed, this implicit equation simply states that $\tilde{\eta}_\la(\tilde{x}_{\la,i})= \sign (a_{0,i})=\sign(\tilde{a}_{\la,i})$, and that $\tilde{\eta}_\la'(\tilde{x}_{\la,i})= 0$.

Since $((a,x),(\la,w)) \mapsto E_{s_0}(a,x,\la,w)$ is a $C^1$ function defined on $(\RR^{N}\times\TT^N) \times (\RR\times L^2(\TT^N))$, we may apply the implicit functions theorem.

The derivative of $E_{s_0}$ with respect to $x$ and $a$ reads
\begin{align*}
	\pd{E}{a}(a,x,\la,w)& = 
	\Ga_x^*
	\Phi_x \\
	\pd{E}{x}(a,x,\la,w) &= \begin{pmatrix}
    \diag({\Phi_x^{*}}' (\Phi_x a - y_0-w ))\\
    \diag({\Phi_x^{*}}'' (\Phi_x a - y_0-w ))
	\end{pmatrix}
	+ 
	\Ga_x^*
	{\Phi'_x} \diag(a).
\end{align*}
so that for $\la = 0$, $w=0$ and using $y_0 = \Phi_{x_0}a_0$, one obtains
\begin{align*}
	\pd{E_s}{(a,x)}(a_0,x_0,0,0) &= 
  \Ga_{x_0}^*
	\begin{pmatrix}
		\Phi_{x_0}, \: 
		\Phi'_{x_0} \diag(a_0)
	\end{pmatrix} \\
	& = 
	(\Ga_{x_0}^* \Ga_{x_0})
	\begin{pmatrix}
		\Id & 0 \\ 0 & \diag(a_0)
	\end{pmatrix}.
\end{align*}
Since we assume $\Ga_{x_0}$ has full rank, then $\pd{E_{s_0}}{(a,x)}(a_0,x_0,0,0)$ is invertible and the implicit functions theorem applies:
 there is a neighborhood $V\times W$ of $(a_0,x_0)\times \{(0,0)\}$ in $(\RR^{N}\times \TT^N)\times (\RR\times L^2(\TT))$
 and a function $f : W\rightarrow V$ such that
\begin{align*}
	& ((a,x),\lambda,w) \in V\times W \qandq E_{s_0}(a,x,\la,w)=0  \\
	\Longleftrightarrow \quad & (\la,w) \in W \qandq (a,x)=f(\la,w).
\end{align*}
Moreover, writing $(\hat a_{\lambda,w}, \hat x_{\lambda,w}) = f(\la,w)\in \RR^N\times \TT^N$, we have
\begin{itemize}
	\item $(\hat a_{0,0}, \hat x_{0,0}) = (a_0, x_0)$,
 	\item for any $(\la,w) \in W$, $\sign(\hat a_{\la,w}) = s_0$,
 	\item if $\phi\in C^k(\TT)$ (for $k\geq 2$), then $f\in C^{k-1}(W)$.
\end{itemize}
The constructed amplitudes and locations $(\hat a_{\lambda,w}, \hat x_{\lambda,w})$ coincide with those of the solutions of~$\Pp_\la(y_0+w)$ for all $(\la, w)\in W$ such that $\norm{w}_2\leq \alpha \la$.
 Possibly changing the value of $\la_0$ so that $D_{\alpha, \la_0}\subset W$, we obtain the desired result.
\end{proof}

\begin{rem}
Although this paper focuses on identifiable measures,  Theorem~\ref{thm-noise-robustness} 
describes the evolution of the solutions of $\Pp_\la(y_0+w)$ for any input measure $m_1$ such that there exists $m_0$
 which satisfies the non degenerate source condition and $y_0=\Phi m_1= \Phi m_0$. Instead of converging towards $m_1$, the solutions will converge towards $m_0$.
\end{rem}

%%%%%%%%%%%%%%%%%%%%%%%%%%%%%%%%%%%%%%%%%%%%%%%%%%%%%%%%%%%%%%%%%%%%%%%%%%%%%%%%%%%%%%%
\subsection{Extensions}
\label{subsec-extensions}

Theorem~\ref{thm-noise-robustness} extends in a straightforward manner to higher dimensions, i.e. when replacing $\TT$ by $\TT^d$ for $d \geq 1$.  In the NDSC introduced in Definition~\ref{def-ndsc}, one should replace, for $i=1,\ldots,N$, the constraint $\eta_0''(x_{0,i}) \neq 0$ by the constraint that the Hessian $D^2 \eta_0(x_{0,i}) \in \RR^{d \times d}$ is invertible. 

The proof also extends to non-stationary filtering operators, i.e. which can be written as
\eq{
	\foralls t \in \TT^d, \quad
	\Phi m(t) = \int_{\TT^d} \phi(x,t) \d m(x)
}
where $\phi \in C^2(\TT^d \times \TT^d)$.

%%%%%%%%%%%%%%%%%%%%%%%%%%%%%%%%%%%%%%%%%%%%%%%%%%%%%%%%%%%%%%%%%%%%%%%%%%%%%%%%%%%%%%%
\subsection{Application to the ideal Low-pass filter}

We first observe that the injectivity condition on $\Ga_x$ assumed in Theorem~\ref{thm-noise-robustness} always holds.
\begin{prop}[Injectivity of $\Ga_{x}$]
Let $x=(x_1,\ldots x_N)\in \TT^N$ with $x_i\neq x_j$ for $i\neq j$ and $N\leq f_c$.
Then $\Ga_x=(\Phi_x, \Phi_x')$ has full rank.
\label{prop-gamma-injective}
\end{prop}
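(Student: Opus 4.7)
The plan is to prove that $\Ga_x$ has trivial kernel. Suppose $(u,v) \in \RR^N \times \RR^N$ satisfies
\[
f(t) := \sum_{i=1}^N u_i \varphi(x_i - t) + \sum_{i=1}^N v_i \varphi'(x_i - t) = 0 \quad \text{in } L^2(\TT).
\]
I would expand via the Fourier series of the Dirichlet kernel, $\varphi(x_i - t) = \sum_{|k|\leq f_c} e^{2i\pi k x_i} e^{-2i\pi k t}$ and $\varphi'(x_i - t) = \sum_{|k|\leq f_c} 2i\pi k \, e^{2i\pi k x_i} e^{-2i\pi k t}$, and identify the Fourier coefficients of $f$ to obtain
\[
\sum_{i=1}^N \bigl(u_i + 2i\pi k \, v_i\bigr) e^{2i\pi k x_i} = 0 \quad \text{for every } |k|\leq f_c.
\]
Equivalently, the linear functional $L(P) := \sum_{i=1}^N \bigl(u_i P(x_i) + v_i P'(x_i)\bigr)$ vanishes on every exponential $e^{2i\pi k \cdot}$ with $|k|\leq f_c$, and hence on every trigonometric polynomial of degree at most $f_c$.

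The core of the proof is then to show, by Hermite interpolation, that $L \equiv 0$ on this space forces $u = v = 0$. For each $i \in \{1,\ldots,N\}$, I would construct real trigonometric polynomials $Q_i, R_i$ of degree at most $f_c$ satisfying $Q_i(x_j) = \delta_{ij}$, $Q_i'(x_j) = 0$, $R_i(x_j) = 0$, and $R_i'(x_j) = \delta_{ij}$ for all $j$. Evaluating $L$ on them would instantly yield $u_i = L(Q_i) = 0$ and $v_i = L(R_i) = 0$, concluding the proof.

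The construction relies on the non-negative trigonometric polynomial $e_i(x) := \prod_{j \neq i} \sin^2(\pi(x - x_j))$, which has degree $N-1$ and a double zero at each $x_j$ for $j \neq i$, while $e_i(x_i) > 0$ by distinctness of the $x_j$. Setting $Q_i(x) = e_i(x)\bigl(A_i + B_i \sin(2\pi(x - x_i))\bigr)$, the Hermite conditions at $x_j$ for $j \neq i$ are automatic thanks to the double zero of $e_i$; the two remaining conditions $Q_i(x_i) = 1$ and $Q_i'(x_i) = 0$ uniquely determine $(A_i, B_i)$ through a $2\times 2$ triangular linear system with non-vanishing diagonal entries $e_i(x_i)$ and $2\pi e_i(x_i)$. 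A symmetric ansatz yields $R_i$. The resulting polynomials have degree exactly $N$, so they lie in the required space precisely because $N \leq f_c$; this degree count is where, and only where, the hypothesis enters, and it is the main point I would expect to have to verify carefully.
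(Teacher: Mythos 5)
Your proof is correct, and it shares the paper's first step but not its second. Like the paper, you expand in the Fourier basis of the Dirichlet kernel and reduce the injectivity of $\Ga_x$ to the linear independence, on the space of trigonometric polynomials of degree at most $f_c$, of the $2N$ evaluation functionals $P \mapsto P(x_i)$ and $P \mapsto P'(x_i)$. Where you diverge is in how that independence is established. The paper assembles the resulting equations into a confluent Vandermonde-type matrix, completes it (adding extra nodes when $N < f_c$) to a square $(2f_c+1)\times(2f_c+1)$ matrix, and proves invertibility by a root-counting argument: a nonzero Laurent polynomial $\sum_{|k|\le f_c} \alpha_k z^k$ cannot have $2f_c+1$ roots on the unit circle counted with multiplicity. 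You instead exhibit the dual Hermite basis explicitly via $e_i(x)=\prod_{j\ne i}\sin^2(\pi(x-x_j))$ and evaluate the annihilating functional on it. The two arguments are dual to one another --- the paper shows that no nonzero degree-$f_c$ trigonometric polynomial annihilates the data (too many roots), while you show the evaluation map is surjective onto $\RR^{2N}$ by interpolation. Your route is more constructive, stays entirely in the real trigonometric setting, and avoids the node-completion step; the paper's is a little shorter and is the standard confluent-Vandermonde argument. Both proofs invoke the hypothesis $N\le f_c$ through the same kind of degree count (you need $\deg Q_i \le N \le f_c$; the paper needs room for $2f_c+1$ roots of a degree-$2f_c$ polynomial), and both are complete.
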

The proof is given in Appendix~\ref{sec-proof1}.

\begin{figure}[htb]
\centering
\begin{tabular}{@{}c@{}c@{}}
\includegraphics[width=0.48\linewidth,clip,trim=0 -27px 0 0]{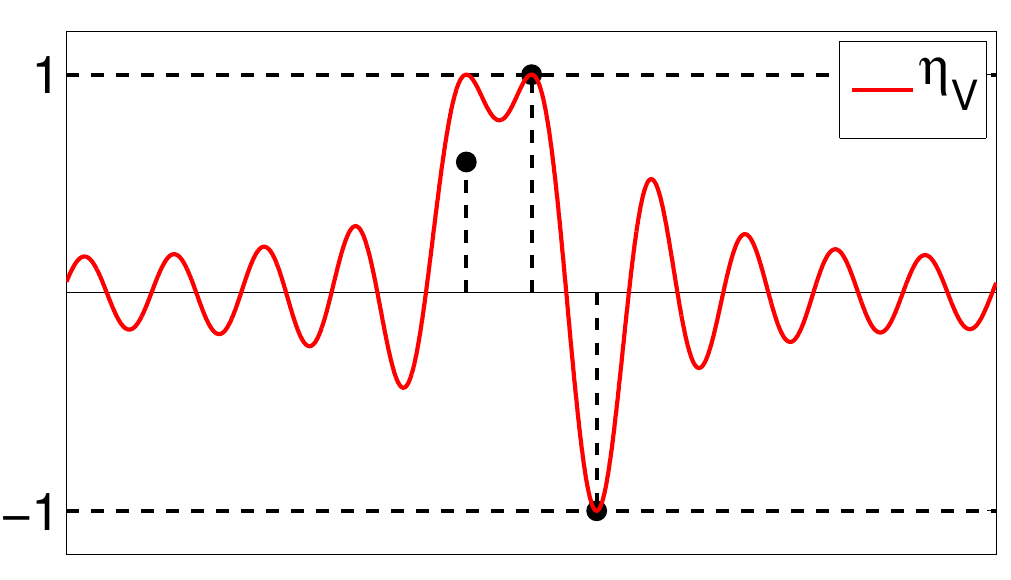}&
\includegraphics[width=0.49\linewidth]{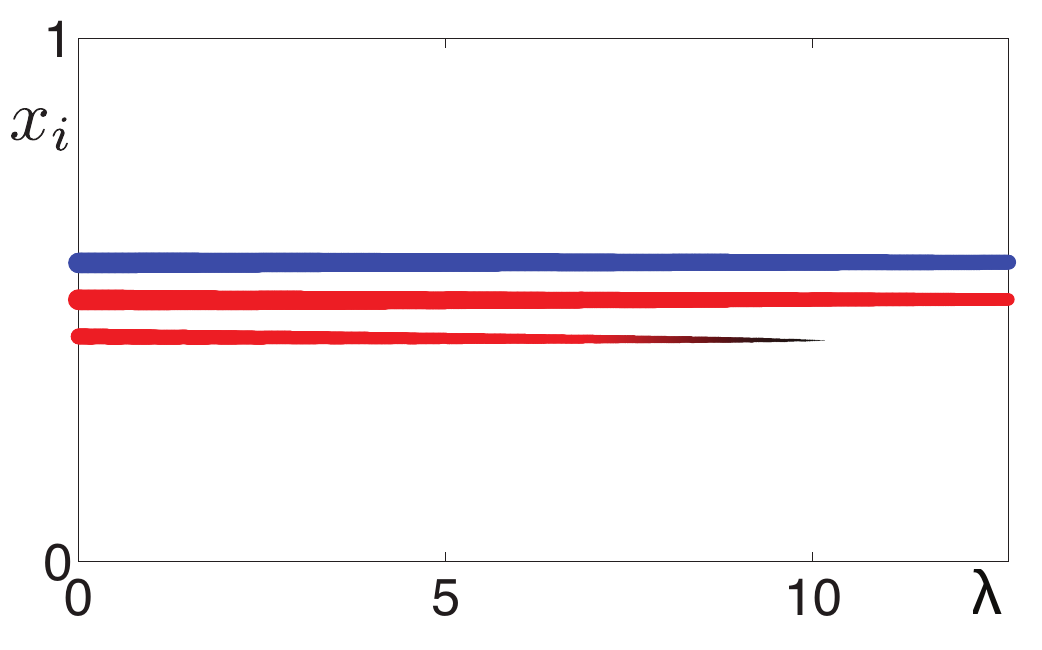}\\
(a) $m_0$ and $\eta_0$ & (b) $w=0$ \\
\includegraphics[width=0.49\linewidth]{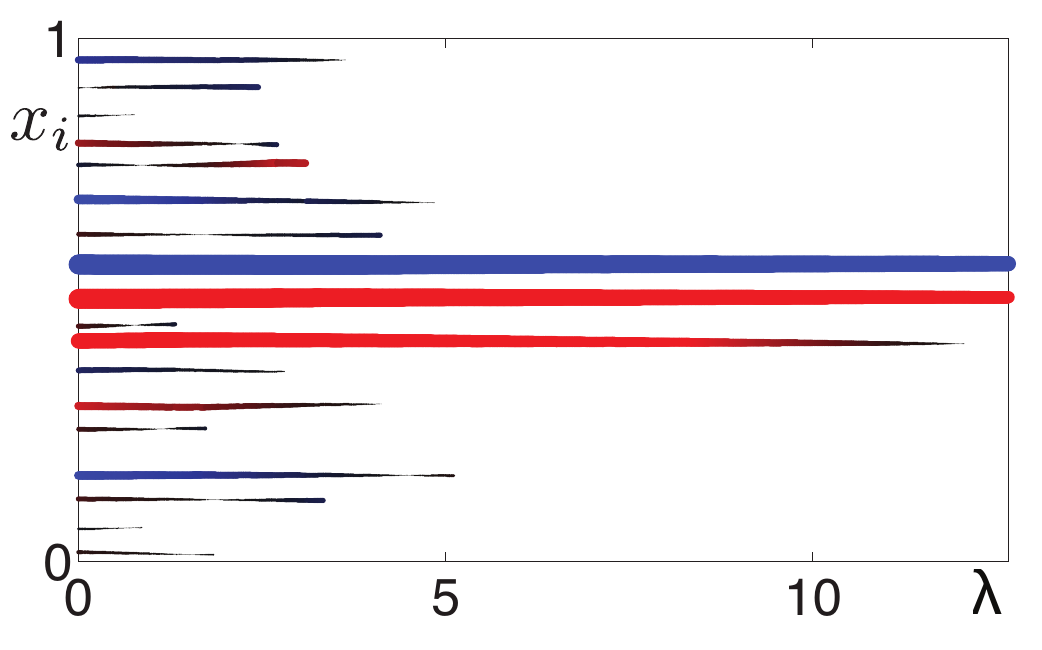}&
\includegraphics[width=0.49\linewidth]{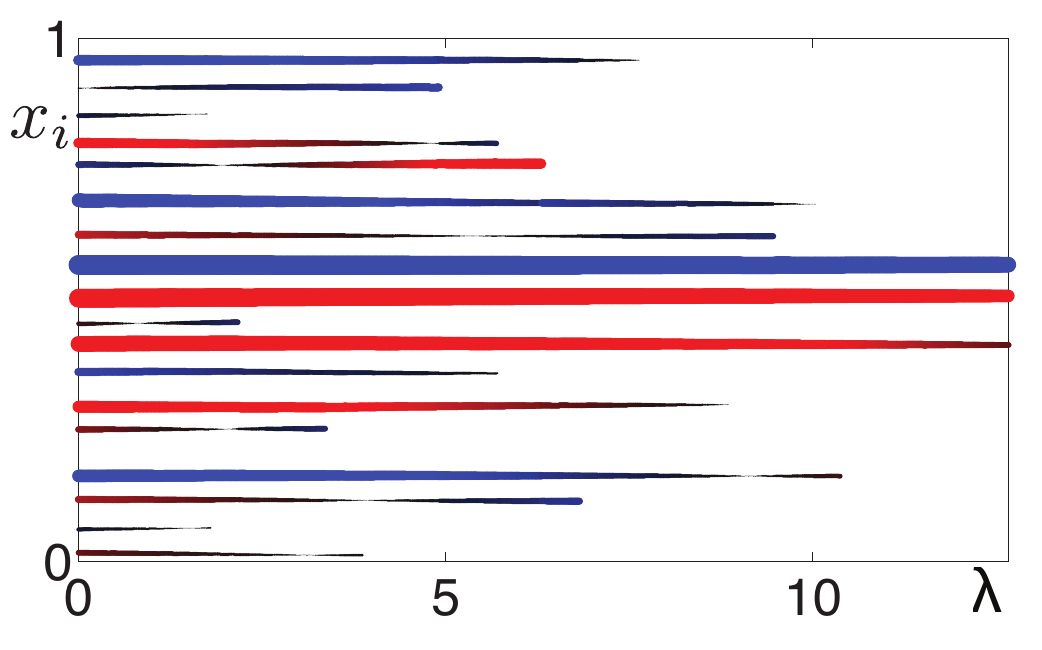}\\
(c) $\norm{w}=0.4 \norm{y}$ & (d) $\norm{w}=0.8 \norm{y}$
\end{tabular}
\caption{\label{fig-paths} 
(a) Input measure $m_0$, and corresponding minimal norm certificate. 
(b,c,d) Regularization paths $\la \mapsto \tilde{m}_\la$ that are solutions of $\Pp_\la(\Phi m_0 + w)$ for three different noise levels $\norm{w}$. 
Each ``strip'' represents the evolution of a spike as $\la$ varies. The color refers to the sign of the spike (blue for negative and red for positive)
and the (vertical) width is proportional to its amplitude.
The exact location is given by the middle of each band.
%The horizontal axis is $\la$, and the vertical axis indexes the Diracs' locations. Each vertical slice of these plots display graphically $m_\la$ with a dot for each Dirac location with a size and color (with blue for negative and red for positive) is proportional to the spike weights.
}
\end{figure} 

As to whether or not the Non Degenerate Source Condition holds for discrete measures, we will discuss this matter in Section~\ref{sec-vanishing} more in depth. For now, let us mention that we have observed empirically that this condition holds under the hypotheses of Theorem~$1.2$ in \cite{Candes-toward}, namely that $\Delta(m)>\frac{1.87}{f_c}$, but also with measures with far smaller values of $\Delta(m)$.

\begin{figure}[htb]
\centering
\begin{tabular}{@{}c@{}c@{}}
\includegraphics[width=0.48\linewidth,clip,trim=0 -27px 0 0]{paths/3diracsa-fc10-certif}&
\includegraphics[width=0.49\linewidth]{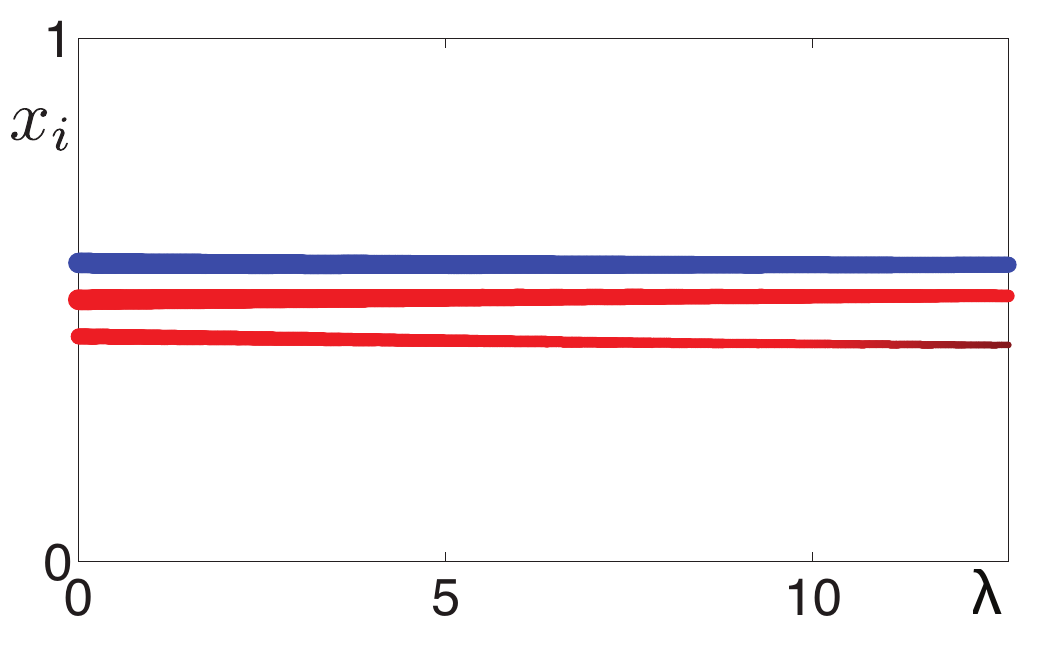}\\
(a) $m_0$ and $\eta_0$ & (b) $\norm{w_0}=0.07 \norm{y}$ \\
\includegraphics[width=0.49\linewidth]{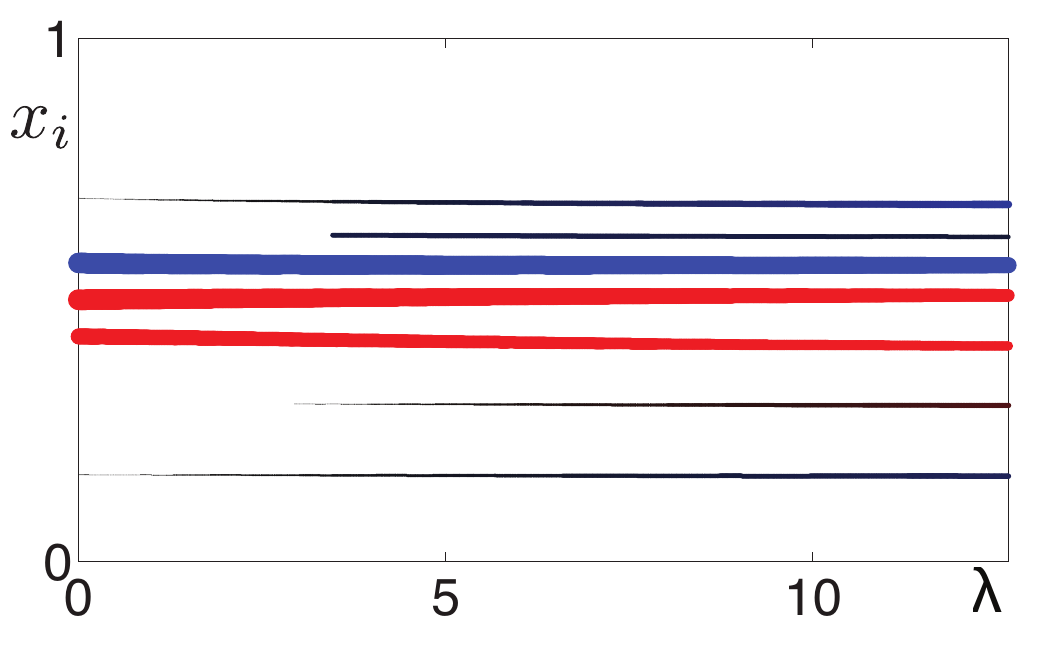}&
\includegraphics[width=0.49\linewidth]{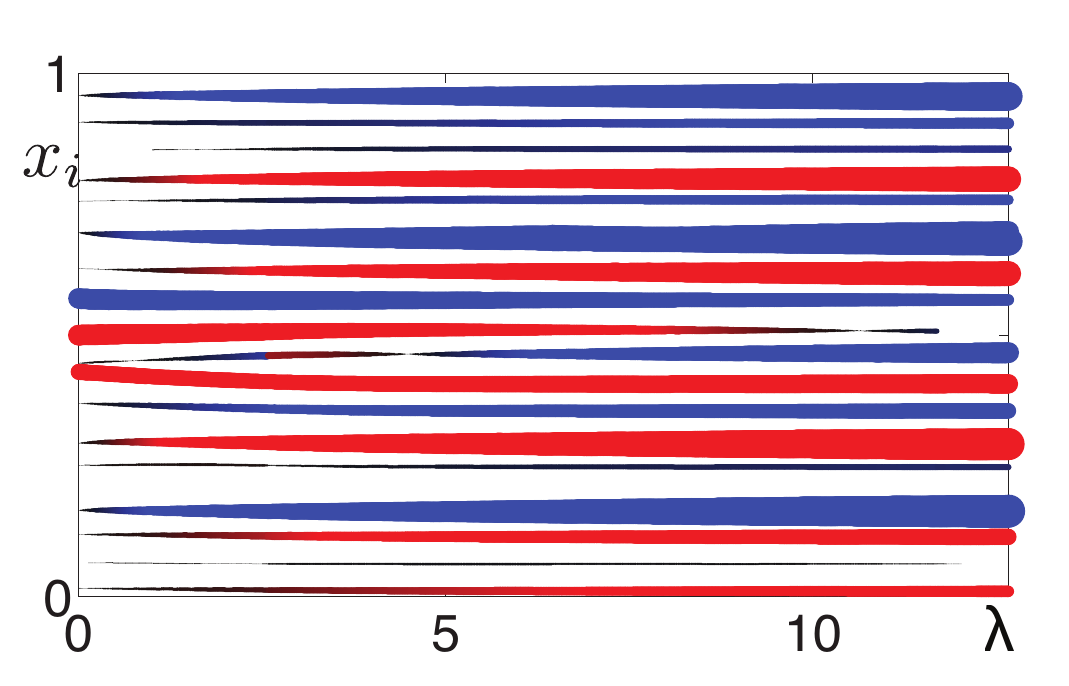}\\
(c) $\norm{w_0}=0.1 \norm{y}$ & (d) $\norm{w_0}=0.5 \norm{y}$
\end{tabular}
\caption{\label{fig-paths-scaling} 
Same plots as Figure~\ref{fig-paths} except that the solutions of $\Pp(\Phi m_0 + \la w_0)$ are displayed instead of those of $\Pp_\la(\Phi m_0 + w)$. 
}
\end{figure} 

Figure~\ref{fig-paths} shows the whole solution path $\la \mapsto \tilde{m}_\la$ of the solutions of $\Pp_\la(\Phi m_0 + w)$ when $f_c=10$ and the input measure is identifiable and has three spikes separated by $\Delta(m_0)=0.7/f_c$. Such a measure satisfies the Non-degenerate Source Condition as shown in plot (a). The plots (b,c,d) illustrate the conclusion of Theorem~\ref{thm-noise-robustness}. For values of $\la$ which are too small with respect to $\norm{w}$, the solution $\tilde{m}_\la$ is perturbed with spurious spikes, but as soon as $\la$ is large enough, $\tilde{m}_\la$ has a support that closely (but not exactly) matches the one of $m_0$. For large value of $\la$, spikes starts disappearing, and the support is not correctly estimated. Figure~\ref{fig-paths-scaling} shows the solutions of $\Pp_\la(\Phi m_0 + \la w_0)$, i.e. the noise $w=\la w_0$ is scaled by the regularization parameter $\la$. In accordance with Theorem~\ref{thm-noise-robustness}, this shows that for $\|w\|_2/\la = \|w_0\|_2 \leq 0.07$, the support of the spikes is precisely estimated.

% Figure~\ref{fig-robust} shows an example of the behavior predicted by Theorem~\ref{thm-noise-robustness}. Here, in order to illustrate Theorem~\ref{thm-noise-robustness}, for each value of $\la$, we have rescaled the noise so that $\norm{w}=\alpha \la$ (with $\alpha=0.7$). The frequency cutoff is $f_c=13$. First, the number of spikes may differ, with some spikes missing or appearing at strange places. But as $\la$ decreases, the number of spikes becomes the same as the original signal, and their positions and amplitudes of the estimated spikes converge to those of the true spikes.
 
% Several animated examples can be downloaded at \url{http://www.ceremade.dauphine.fr/~vduval/spikes.html}.

% !TEX root = ../DuvalPeyre-SparseSpikes.tex

\section{Vanishing Derivatives Pre-certificate}
\label{sec-vanishing}

We show in this section that, if the Non Degenerate Source Condition holds, the minimal norm certificate $\eta_0$ is characterized by its values on the support of $m_0$ and the fact that its derivative must vanish on the support of $m_0$. Thus, one may compute the minimal norm certificate simply by solving a linear system, without handling
    the cumbersome constraint $\normi{\eta_0} \leq 1$.

%%%%%%%%%%%%%%%%%%%%%%%%%%%%%%%%
\subsection{Dual Pre-certificates}
Loosely speaking, we call \textit{pre-certificate} any ``good candidate'' for a solution of \eqref{eq-extremal-constrained}.
Typically, a pre-certificate is built by solving a linear system (with possibly a condition on its norm).
The following pre-certificate appears naturally in our analysis.

\begin{defn}[Vanishing derivative pre-certificate]\label{defn-vanishing-der-certif}
The vanishing derivative pre-certificate associated with a measure $m_0 = m_{a_0,x_0}$ is $\etaV = \Phi^* \pV$ where
\begin{align}
	\pV =\uargmin{p \in L^2(\TT)} \norm{p}_2
		\quad \text{subj. to} \quad \foralls 1\leq i\leq N,\quad
	\choice{
		 (\Phi^* p)(x_{0,i})=\sign(a_{0,i}), \\
		 (\Phi^* p)'(x_{0,i})=0.
	}
\label{eq-vanishing-der-certif}
\end{align}
\end{defn}
It is clear that if the Source Condition (see Definition~\ref{def-source-cdt}) holds, then $\pV$ exists (since Problem~\eqref{eq-vanishing-der-certif} is feasible).
Observe that, in general, $\etaV$ is not a certificate for $m_0$ since it does not satisfy the constraint $\|\etaV\|_\infty\leq 1$.
The following proposition gathers several facts about the vanishing derivative pre-certificate which show that it is indeed a good candidate for the minimal norm certificate.

\begin{prop}
Let $m_0=m_{a_0,x_0}=\sum_{i=1}^N a_{0,i} \delta_{x_{0,i}}$ be a discrete measure.
The following assertions hold.
\begin{itemize}
  \item Problem~\eqref{eq-vanishing-der-certif} is feasible and $\|\etaV\|_\infty\leq 1$ if and only if the Source Condition holds and $\etaV=\eta_0$.
  \item If Problem~\eqref{eq-vanishing-der-certif} is feasible and $\Ga_{x_0}$ has full rank, i.e. $\Ga_{x_0}^* \Ga_{x_0} \in \RR^{2N \times 2N}$ is invertible, then 
    	\eq{
		\etaV = \Phi^* \Ga_{x_0}^{+,*} 
		\begin{pmatrix}
			\sign(a_0) \\
			0
		\end{pmatrix}
		\qwhereq
		\Ga_{x_0}^{+,*} = \Ga_{x_0}(\Ga_{x_0}^* \Ga_{x_0})^{-1}.
	}
\item If $\Ga_{x_0}$ has full rank, then $m_0$ satisfies the Non Degenerate Source Condition if and only if Problem~\eqref{eq-vanishing-der-certif} is feasible and 
\begin{align*}
	&\foralls s \in \TT\setminus \{x_{0,1},\ldots x_{0,N}\}, \quad &|\etaV(s)| < 1, \\
	&\foralls i\in \{1,\ldots N\}, \quad &\etaV''(x_{0,i}) \neq 0.
\end{align*}
\end{itemize}
\label{prop-vanish-tools}
\end{prop}

The third assertion of Proposition~\ref{prop-vanish-tools} states that it is equivalent to check the Non Degenerate Source Condition on $\eta_0$ (Definition~\ref{def-ndsc}) or to check the same conditions on $\etaV$. In case those conditions hold, one even has $\etaV=\eta_0$ (first assertion). The main point of this equivalence is that the second assertion yields a practical expression to compute $\etaV$ which may be used in numerical experiments (see Section~\ref{sec-vanishing-application}).

\begin{proof} 
  For the first assertion, we observe that if Problem~\eqref{eq-vanishing-der-certif} is feasible (and thus $\pV$ exists) and $\|\etaV\|_\infty\leq 1$, then $\etaV\in \partial{\normTV{m_0}}$ and the Source Condition holds. Hence, $\|\pV\|_2\geq \|p_0\|_2$. On the other hand the minimal norm certificate $\eta_0$ must satisfy all the constraints of \eqref{eq-vanishing-der-certif},  thus the minimality of the norms of both $\etaV$ and $\eta_0$ implies that $\etaV=\eta_0$. The converse implication is obvious.

For the second assertion, Problem~\eqref{eq-vanishing-der-certif} can be written as
\eq{
	\etaV =  \uargmin{ \eta = \Phi^* p } \norm{p}_2.
	\quad \text{subj. to} \quad
	\choice{
		\Phi_{x_0}^*p = \sign(a_0), \\
		\Phi_{x_0}'^*p = 0,
	}
}
which is a quadratic optimization problem in a Hilbert space with a finite number of affine equality constraints.
Moreover, the assumption that $\Ga_{x_0}$ has full rank implies that the constraints are qualified. Hence it can be solved by introducing Lagrange multipliers $u$ and $v$ for the constraints. One should therefore solve the following linear system to obtain the value of $p=\pV$
	\eq{
		\begin{pmatrix}
			\Id 		& \Phi_{x_0} 	& \Phi'_{x_0} \\
			\Phi_{x_0}^*	& 0 	&0 \\
			{\Phi'_{x_0}}^*	& 0 	&0 \\
		\end{pmatrix}
		\begin{pmatrix}
			p \\ u \\ v
		\end{pmatrix}
		=
		\begin{pmatrix}
			0 \\ s \\ 0
		\end{pmatrix}.
	}
	Solving for $(u, v)$ in these equations gives the result.

  For the third assertion, if the Non Degenerate Source condition holds, we apply Theorem~\ref{thm-noise-robustness} which yields a $C^1$ path $\la \mapsto (\tilde{a}_\la,\tilde{x}_\la )$ of solutions of $\Pp_\la(y_0)$ (we consider the case $w=0$). Then from Proposition~\ref{prop-vanish-certif} below, we obtain that $\etaV$ is a valid certificate and $\etaV=\eta_0$, hence $\etaV$ is non-degenerate.
The converse implication is a straightforward consequence of the first assertion.
\end{proof}

%%%%%%%%%%%%%%%%%%%%%%%%%%%%%%%%%%%%%%%%%%%%%%%%%%%%%%%%%%%%%%%%%%%%%%%%%%%%
\subsection{Necessary condition for support recovery}

There is a priori no reason for the vanishing derivative pre-certificate $\etaV$ to satisfy
$\normi{\etaV}\leq 1$. Here, we prove that that is in fact a necessary condition for (noiseless) exact support recovery to hold on some interval $[0,\la_0)$ with $\la_0>0$, \textit{i.e.} the solutions of $\Pp_\la(y_0)$ having exactly $N$ spikes which converge smoothly towards those of the original measure.
 
\begin{prop}
Let $m_0=m_{a_0,x_0}=\sum_{i=1}^N a_{0,i} \delta_{x_{0,i}}$ be a discrete measure such that $\Ga_{x_0}$ has full rank. 
Assume that there exists $\la_0>0$ and a $C^1$ path $[0,\la_0)\rightarrow \RR^N\times \TT^N$, $\la \mapsto (a_\la, x_\la)$ such that for all $\la\in [0,\la_0)$ the measure $m_\la=m_{a_\la,x_\la}$ is a solution to $\Pp_\la(y_0)$ (the noiseless problem).
  
  Then $\etaV$ exists, $\|\etaV\|_\infty\leq 1$ and $\etaV=\eta_0$.
\label{prop-vanish-certif}
\end{prop}

\begin{proof}
  Let $p_\la =\frac{1}{\la} (y_0-\Phi m_\la)=\frac{1}{\la} (\Phi_{x_0}a_0 -\Phi_{x_\la} a_\la)$ be the certificate defined by the optimality conditions~\eqref{eq-extremal-cdt}. We show that $\Phi^* p_\la$ converges towards $ \Phi^* \Ga_{x_0}^{+,*} 
		\begin{pmatrix}
			\sign(a_0) \\
			0
		\end{pmatrix}=\etaV$
 (and that the latter exists).

Writing
	\eq{
    a_\la' = \frac{\d a_\la}{\d\la} \in \RR^N
		\qandq
		x_\la' = \frac{\d x_\la}{\d\la} \in \RR^N,
	}
 we observe that for any $i\in \{1,\ldots N\}$ and any $x\in \TT$, 
\begin{align*}
	&\frac{a_{\la,i}\varphi (x_{\la, i}-x) - a_{0,i}\varphi (x_{0, i}-x)}{\la}- \left[ a_{0,i}\varphi' ( x_{0, i}-x) x_{0,i}' + a_{0,i}'\varphi ( x_{0, i}-x) \right]\\
 &\quad = \int_{0}^1 \left[ a_{\la t,i}\varphi' ( x_{\la t, i}-x) x_{\la t,i}' + a_{\la t,i}'\varphi ( x_{\la t, i}-x) \right]\\
 & \qquad \qquad - \left[ a_{0,i}\varphi' ( x_{0, i}-x) x_{0,i}' + a_{0,i}'\varphi ( x_{0, i}-x) \right]  \d t,
\end{align*}
and the latter integral converges (uniformly in $x$) to zero when $\la \to 0^+$ by uniform continuity of its integrand (since $ a$, $ x$ and $\varphi$ are $C^1$).
As a consequence, we obtain that $\frac{y_0-\Phi_{ x_\la}{a}_\la}{\la}$ converges uniformly to $-\Ga_{x_0}
		\begin{pmatrix}
			\Id & 0 \\ 0 & \diag(a_0)
		\end{pmatrix}
		\begin{pmatrix}
			 a_0' \\  x_0'
		\end{pmatrix}$.

    On the other hand, we observe that for $\la$ small enough, $\sign (a_\la)=\sign (a_0)$, and using the notations of the proof of Theorem~\ref{thm-noise-robustness}, the implicit equation $E_{s_0}(a_\la,x_\la,\la,0)=0$ holds. Differentiating that equation at $\la=0$ we obtain:
\begin{align*}	
  \pa{ \pd{E_{s_0}}{(a,x)}(a_0,x_0,0,0) }
			\begin{pmatrix}
			 a_0' \\  x_0'
		\end{pmatrix}
	+\pd{E_{s_0}}{\la}(a_0,x_0,0,0)&=0, 
\end{align*}
or equivalently
\begin{align*}	
	(\Ga_{x_0}^* \Ga_{x_0})\begin{pmatrix}
		\Id & 0 \\ 0 & \diag(a_0)
	\end{pmatrix}
\begin{pmatrix}
			 a_0' \\  x_0'
		\end{pmatrix}
&= - 	\begin{pmatrix}
		s_0\\
		0
	\end{pmatrix}.
\end{align*}

As a consequence, Problem~\eqref{eq-vanishing-der-certif} is feasible and we see that $\frac{y_0-\Phi_{ x_\la}{a}_\la}{\la}$ converges uniformly (and thus in the $L^2$ strong topology) to $\Ga_{x_0}(\Ga_{x_0}^* \Ga_{x_0})^{-1}\begin{pmatrix}
			\sign(a_0) \\
			0
		\end{pmatrix}$ and $\Phi^*\left(\frac{y_0-\Phi_{ x_\la}}{\la}\right)$ converges uniformly to $\Phi^* \Ga_{x_0}^{+,*} 
		\begin{pmatrix}
			\sign(a_0) \\
			0
    \end{pmatrix}$ (which is precisely $\etaV$ from the second assertion of Proposition~\ref{prop-vanish-tools}).

   Since $\|\Phi^* \left(\frac{y_0-\Phi_{ x_\la}a_\la}{\la}\right)\|_\infty = \|\Phi^*p_\la\|_\infty\leq 1$ for all $\la >0$, we obtain that $\|\etaV\|_\infty\leq 1$, hence the claimed result.
\end{proof}

%%%%%%%%%%%%%%%%%%%%%%%%%%%%%%%%%%%%%%%%%%%%%%%%%%%%%%%%%%%%%%%%%%%%%%%%%%%%
\subsection{Application to the Ideal Low-pass Filter}
\label{sec-vanishing-application}

In order to prove their identifiability result for measures, the authors of~\cite{Candes-toward} also introduce a ``good candidate'' for a dual certificate associated with $m=m_{a,x}$ for $a \in \CC^N$ and $x \in \RR^N$. 
For $K$ being the square of the Fejer kernel, they build a trigonometric polynomial
\eq{
	\etaCF(t)=\sum_{i=1}^N \left(\alpha_{i}K(t-x_i) + \beta_{i}K'(t-x_i) \right) \mbox{ with } K(t)=\left(\frac{\sin \left(\left(\frac{f_c}{2}+1 \right)\pi t\right)}{\left(\frac{f_c}{2}+1\right)\sin \pi t}\right)^4
}
and compute $(\al_i,\be_i)_{i=1}^N$ by imposing that $\etaCF(x_i)=\sign (a_i)$ and $(\etaCF)' (x_i)=0$.

They show that the constructed pre-certificate is indeed a certificate,  i.e. that $\normi{\etaCF} \leq 1$, provided that the support is separated enough (i.e. when $\Delta(m)\geq C/f_c$). This result is important since it proves that measures that have sufficiently separated spikes are identifiable. Furthermore, using the fact that $\etaCF$ is not degenerate (i.e. $(\etaCF)''(x_i) \neq 0$ for all $i=1,\ldots,N$), the same authors derive  an $L^2$ robustness to noise result in~\cite{Candes-superresol-noisy}, and Fernandez-Granda and Azais et al.  use the constructed certificate to analyze finely the local averages of the spikes in~\cite{Fernandez-Granda-support,Azais-inaccurate}.

From a numerical perspective, we have investigated how this pre-certificate compares with the vanishing derivative pre-certificate that appears naturally in our analysis, by generating real-valued measures for different separation distances and observing  when each pre-certificate $\eta$ satisfies $\normi{\eta} \leq 1$.

%\begin{figure}[ht]
%\centering
%	\begin{tabular}{@{}c@{\hspace{1mm}}c@{}}
%	   \includegraphics[width=0.46\linewidth] {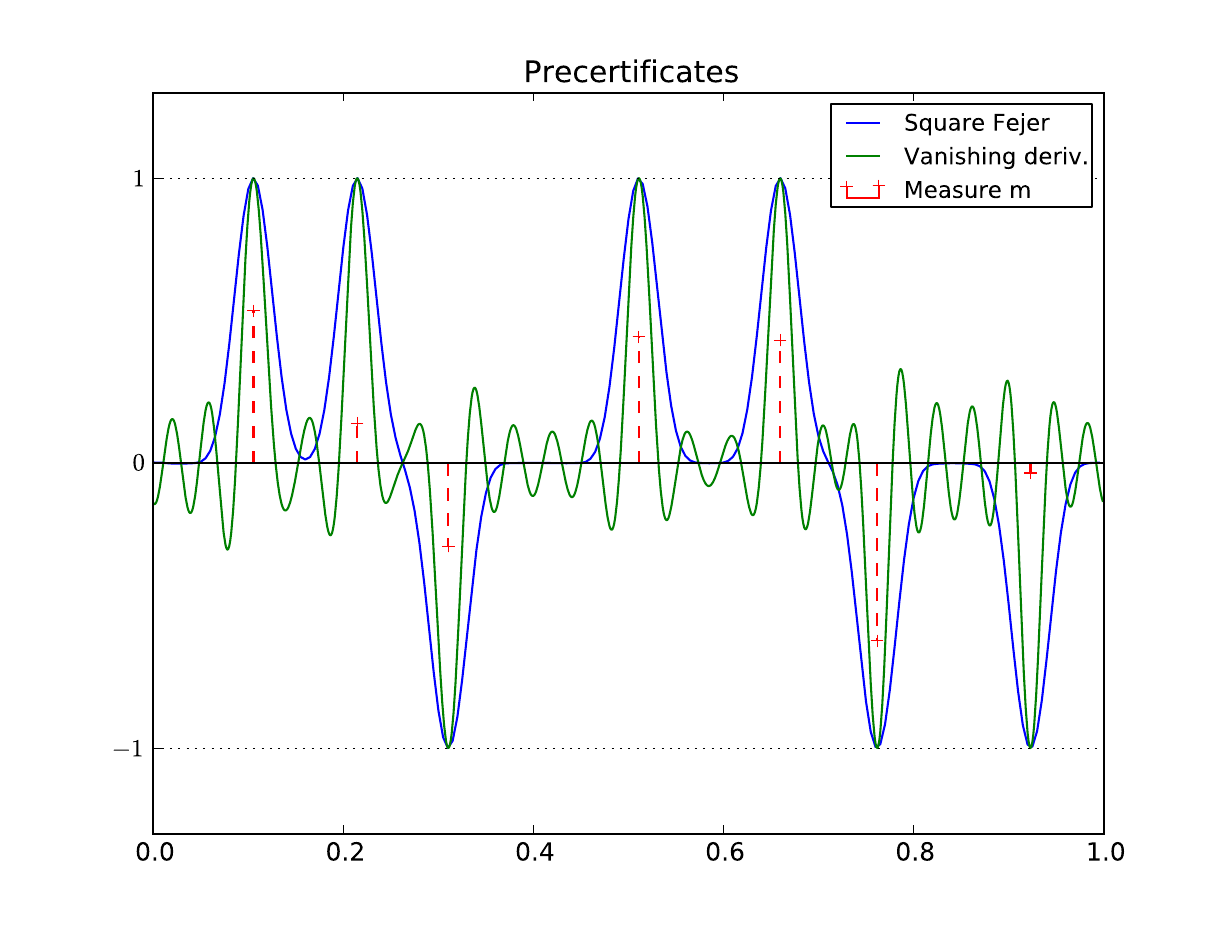} &
%   		\includegraphics[width=0.46\linewidth] {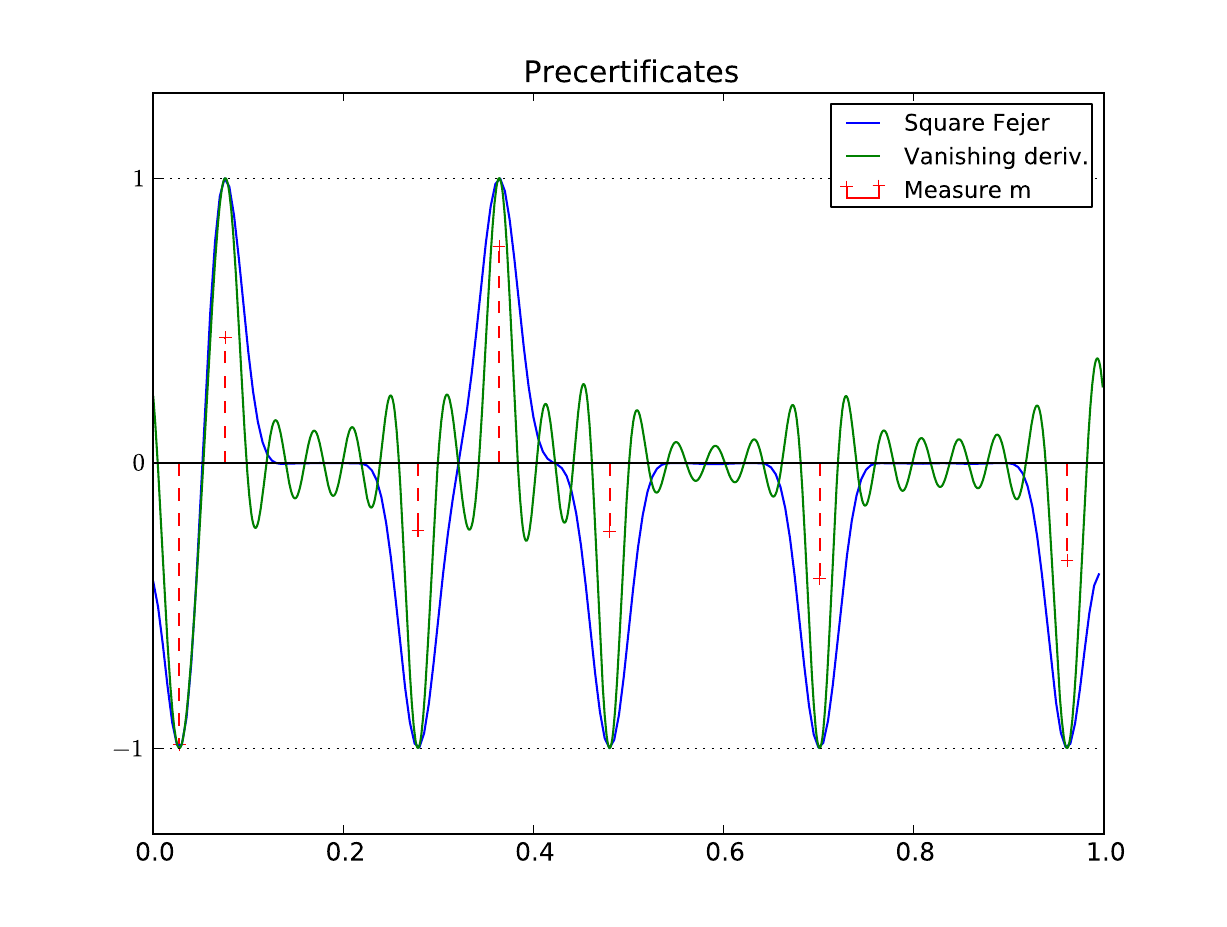} \\
%		$\Delta(m)\approx 2.50/f_c$ & $\Delta(m)\approx 1.26/f_c$ \\[3mm]
%	   \includegraphics[width=0.46\linewidth] {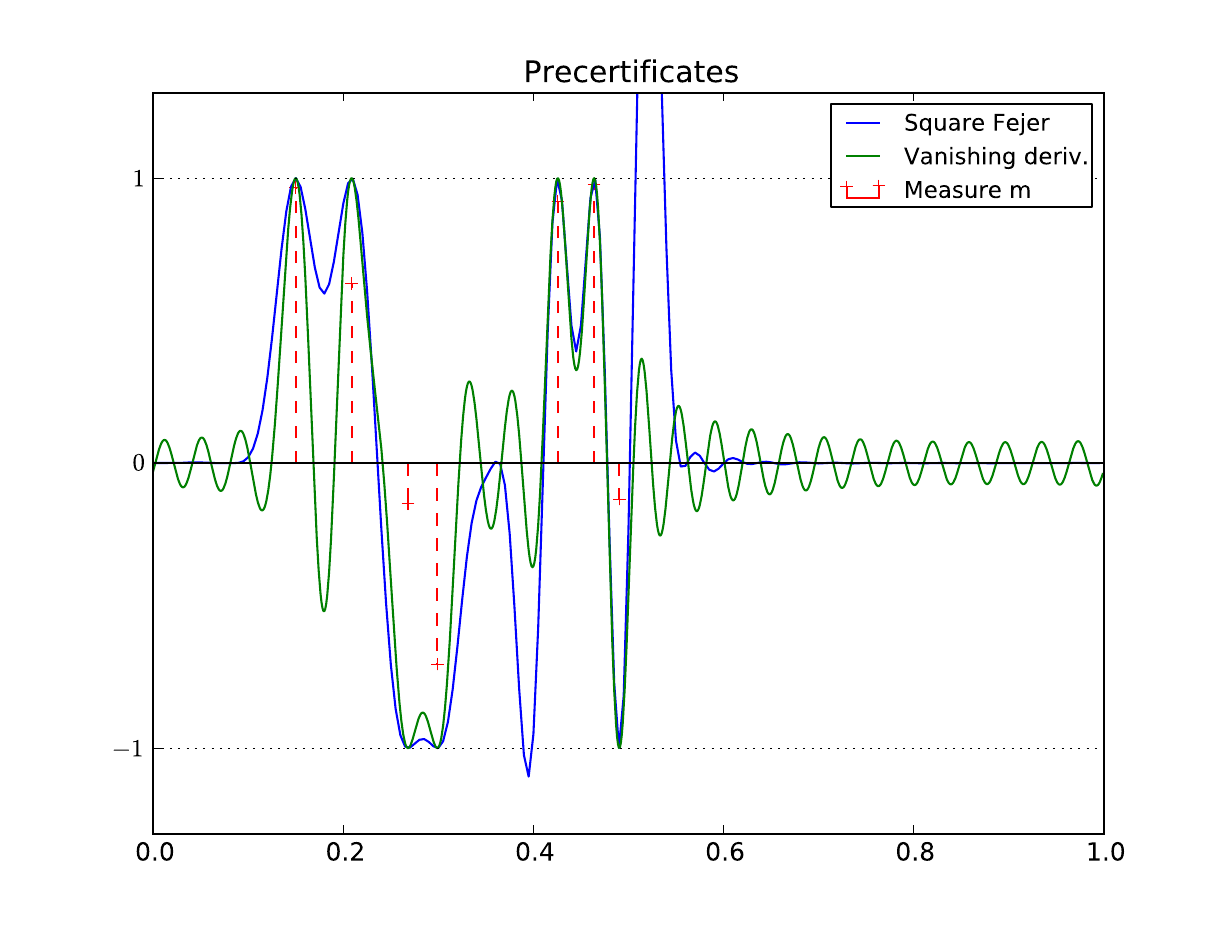} &
%	   \includegraphics[width=0.46\linewidth] {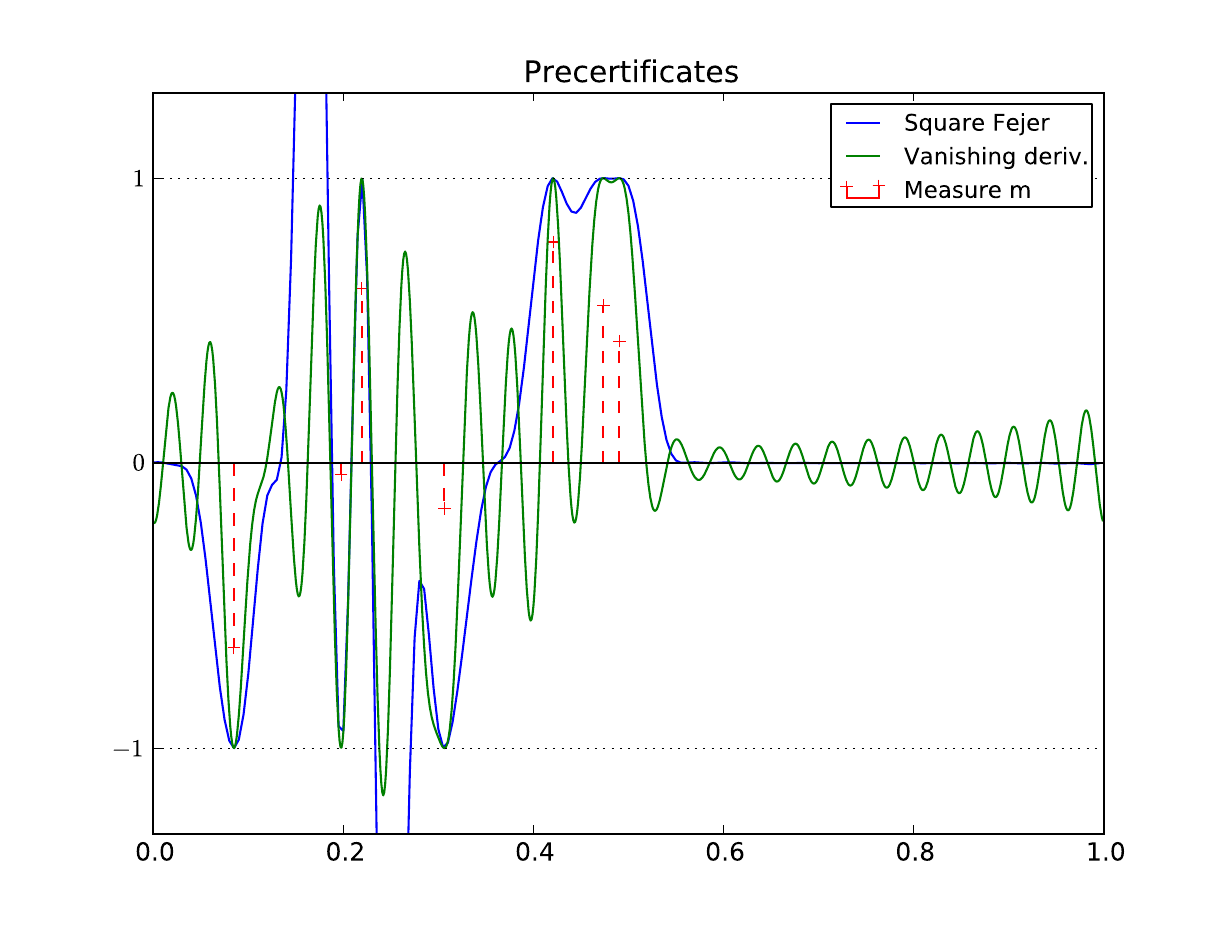}\\
%		$\Delta(m)\approx 0.69/f_c$ & $\Delta(m)\approx 0.44/f_c$
%	\end{tabular}
%\caption{\label{fig-precertif} % 
%The blue curve is the square Fejer kernel pre-certificate $\hat \eta_0$ (introduced in~\cite{Candes-toward}) and the green curve shows the vanishing derivative pre-certificate $\bar\eta_0$, defined in~\eqref{eq-vanishing-der-certif}, for several measures $m$ (whose Diracs' locations and elevation are display in dashed red) with different separation distances $\De(m)$. Eventually when $\Delta(m)$ is small enough, both pre-certificates break (i.e. are not anymore certificates), but the square Fejer always breaks before the vanishing derivative pre-certificate.\vspace{2mm}}
%\end{figure}

\begin{figure}[ht]
\centering
% \begin{tabular}{@{}c@{}c@{}}
\subfloat[$\Delta(m_0)=0.8/f_c$]{\includegraphics[width=0.49\linewidth]{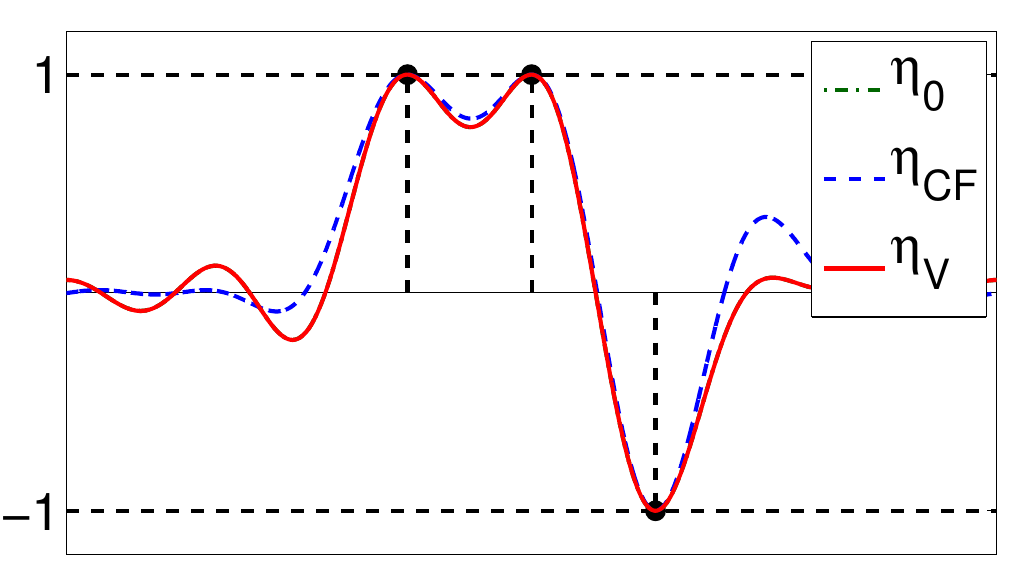}}
\subfloat[$\Delta(m_0)=0.7/f_c$]{\includegraphics[width=0.49\linewidth]{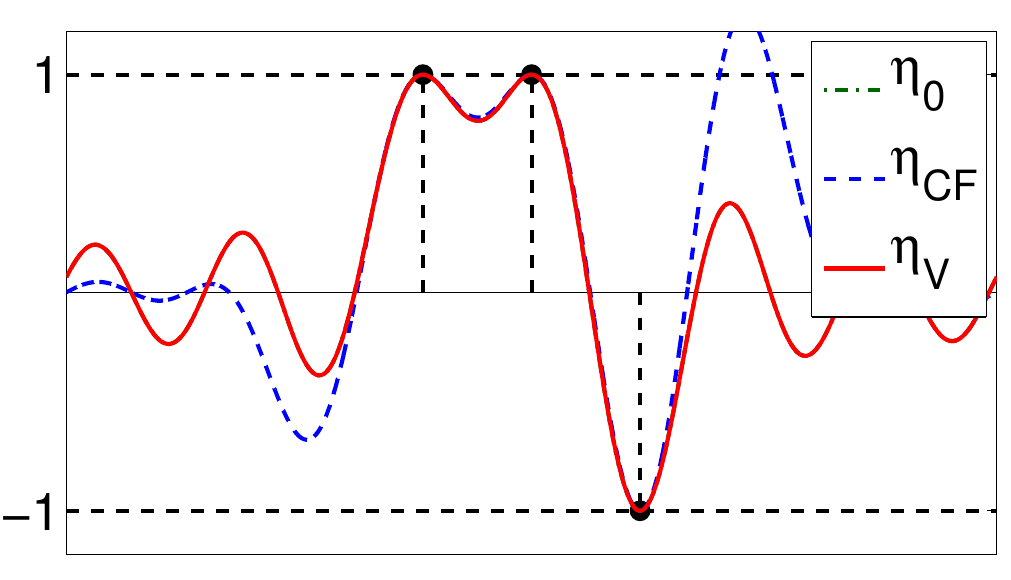}}\\
\subfloat[$\Delta(m_0)=0.6/f_c$]{\includegraphics[width=0.49\linewidth]{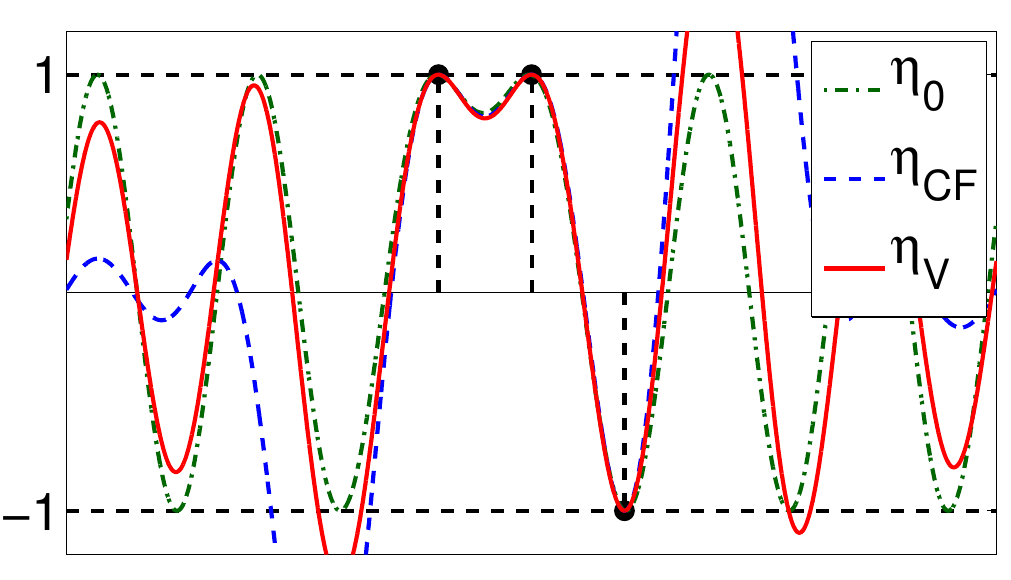}}
\subfloat[$\Delta(m_0)=0.5/f_c$]{\includegraphics[width=0.49\linewidth]{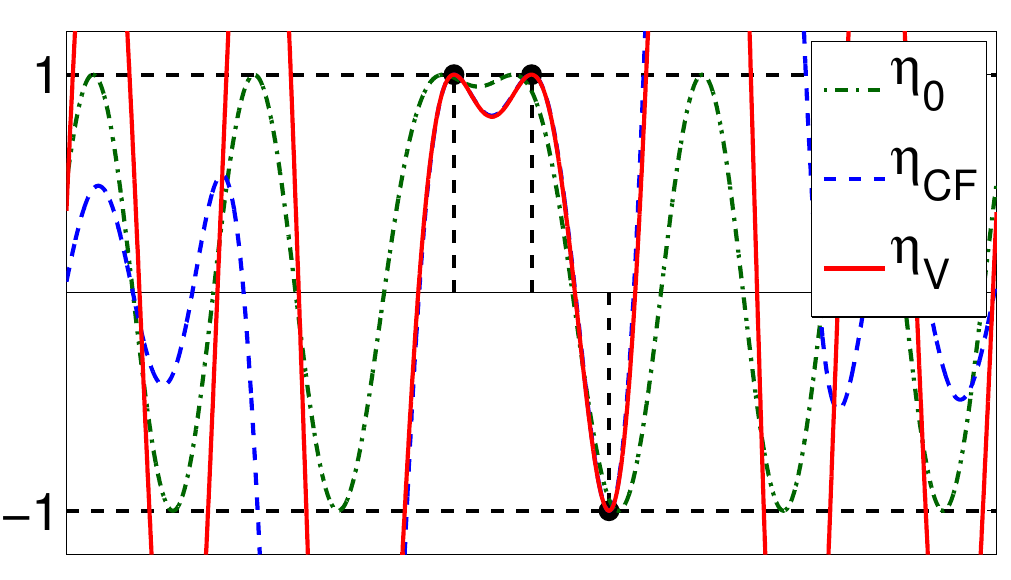}}
\caption{\label{fig-certif} % 
Pre-certificates for three equally spaced masses. The blue curves with dots is the Fejer
 pre-certificate $\etaCF$, while red continuous line is the vanishing derivative $\etaV$. The 
 black dashed line is the minimal norm precertificate $\eta_0$.\vspace{2mm}}
\end{figure}

As predicted by the result of~\cite{Candes-toward}, we observe numerically that the pre-certificate
 $\etaCF$ is a certificate (i.e. $\normi{\etaCF} \leq 1$) for any measure with $\De(m_0) \geq 1.87/f_c$.
 We also observe that this continues to hold up to $\De(m_0) \geq 1/f_c$. Yet, below $1/f_c$,
 it may happen that some measures are still identifiable (as asserted using the
 vanishing derivative pre-certificate $\etaV$)  but $\etaCF$ stops being a certificate, i.e.
 $\normi{\etaCF} > 1$. A typical example is shown in Figure~\ref{fig-certif}, where, for $f_c=6$
 we have used three equally spaced masses as an input, their separation distance being
 $\Delta(m_0)\in \{\frac{0.8}{f_c},\frac{0.7}{f_c},\frac{0.6}{f_c},\frac{0.5}{f_c}\}$. 
 Here, we have computed an approximation of the minimal norm certificate $\eta_0$ by solving
  \eqref{eq-initial-dual} with very small $\la$.
 
 For $\Delta(m_0)=\frac{0.8}{f_c}$, both $\etaV$ and $\etaCF$ are certificates, so that the
vanishing derivatives pre-certificate $\etaV$ is equal to the minimal norm certificate $\eta_0$.
For $\Delta(m_0)=\frac{0.7}{f_c}$, $\etaCF$ violates the constraint $\normi{\etaCF}\leq 1$ but
 the vanishing derivative pre-certificates is still a certificate (even showing that the measure
is identifiable). For $\Delta(m_0)=\frac{0.6}{f_c}$ and $\frac{0.5}{f_c}$, neither $\etaV$ nor $\etaCF$
satisfy the constraint, hence $\etaV \neq \eta_0$. Yet, $\eta_0$ ensures that $m_0$ is a solution to \eqref{eq-constrained-pbm}.

 % Yet, below $1/f_c$,
% we observe numerically that some measures are still identifiable (as asserted using the
% vanishing derivative pre-certificate $\etaV$)  but $\etaCF$ stops being a certificate, i.e.
% $\normi{\etaCF} > 1$. An illustration is given in Figure~\ref{fig-precertif}, where the
%  chosen parameters are $f_c=26$ and $N=7$. For the cases $\Delta(m)=2.50/f_c$ and
%   $\Delta(m)=1.26/f_c$, both pre-certificates $\etaV$ and $\etaCF$ are certificates,
% showing that the generated measure is identifiable. Notice how the vanishing derivative
% certificate $\etaV$ oscillates much more than the square Fejer certificate $\etaCF$ . For
% $\Delta(m)=0.69/f_c$, the square Fejer pre-certificate breaks the constraint
%  ($\normi{\etaCF} \approx 2.39$) whereas the vanishing derivative certificate still satisfies
% $\normi{\etaV} \leq 1$. Eventually, for $\Delta(m)=0.44/f_c$, both pre-certificates violate
% the constraint, with $\normi{\etaCF} \approx 3.39$ and $\normi{\etaV}=1.17$.
% 

\begin{figure}[ht]
\centering
\subfloat[$m$ and certificates]{\includegraphics[width=0.49\linewidth]{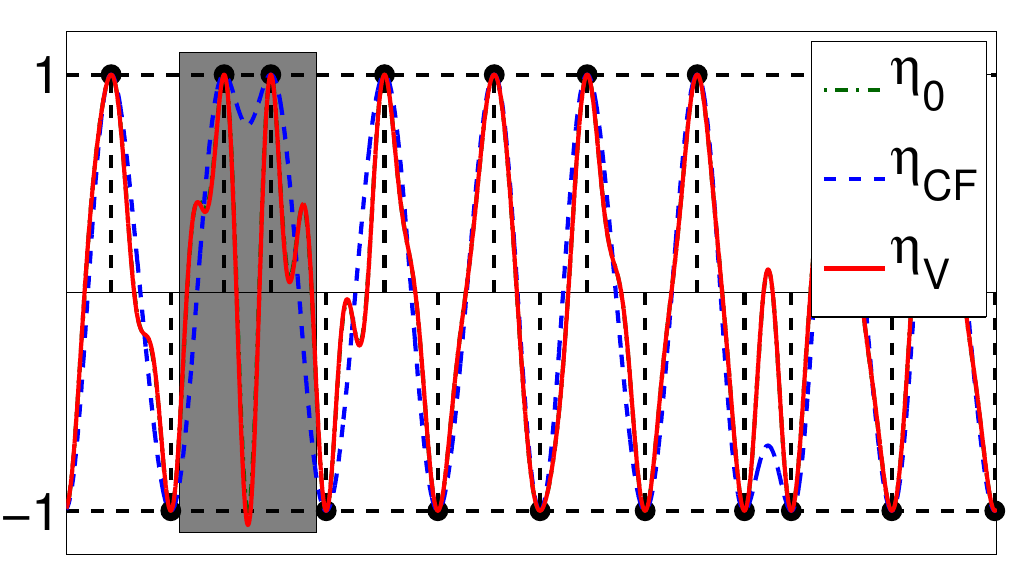}}
\subfloat[Zoom on the gray area]{\includegraphics[width=0.49\linewidth]{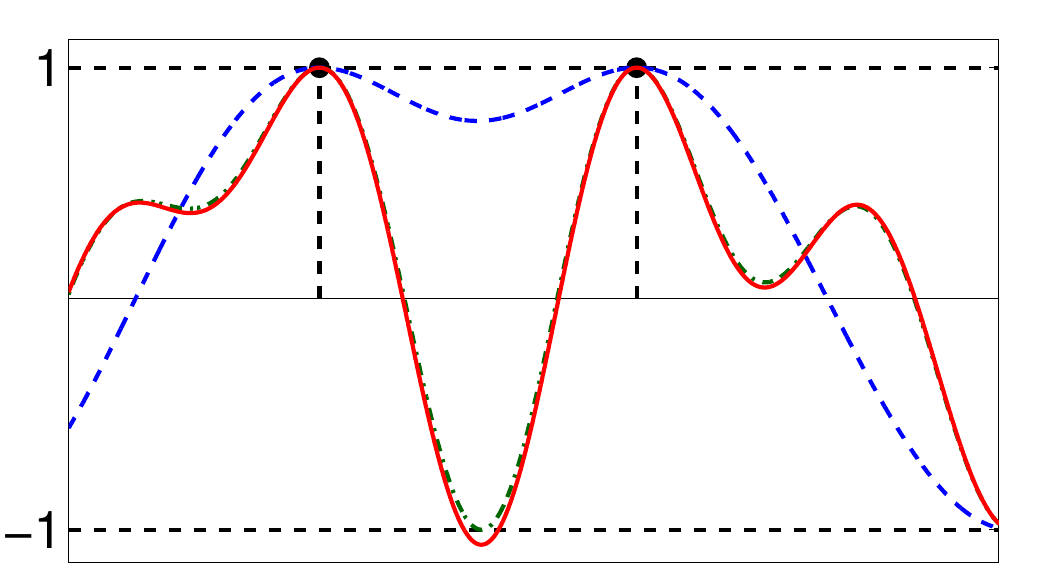}}
\caption{\label{fig-certif-pathologic} % 
	Example of measure for which $\etaV \neq \eta_0$. }
\end{figure}

From the experiments we have carried out, we have observed that the vanishing derivative pre-certificate $\etaV$
 behaves in general at least as well as the square Fejer $\etaCF$. The only exceptions we have 
noticed is for a large number of peaks (when $N$ is close to $f_c$), with $\Delta(m_0)\leq \frac{1.5}{f_c}$.
This is illustrated in Figure~\ref{fig-certif-pathologic} which shows a measure $m_0$ for which $\etaCF$ is a non-degenerate certificate (which shows that it is identifiable), but for which $\eta_0 \neq \etaV$ since $\normi{\etaV}>1$ (thus $\etaV$ is not a certificate). Typically, we have in this case $\ssupp m_0 \subsetneq \exts(m_0)$. Such a measure is identifiable but there is no support recovery for $\la >0$ (in the sense of Proposition~\ref{prop-vanish-certif}), hence its support is not stable.
 
Such pathological cases are relatively rare. An intuitive explanation for this is the fact that having $\eta_0(x)=\pm 1$
for $x\in \TT\setminus \supp (m_0)$ or  $\eta_0''(x)=0$ for some $x\in \supp (m_0)$ tend to impose a large $L^2$ norm, thus contradicting the minimality of $p_0$ 
(recall that when $\phi$ is an ideal low pass filter $\norm{\eta}_2=\norm{p}_2$).

% !TEX root = ../DuvalPeyre-SparseSpikes.tex

%%%%%%%%%%%%%%%%%%%
\section{Discrete Sparse Spikes Deconvolution}
\label{sec-discrete}

%%%%%%%%%%%%%%%%%%%%%%%%%%%%%%%%%%%%%%%%%%%%%%%%%%%%%%%%%
\subsection{Finite Dimensional $\ell^1$ Regularization}

A popular way to compute approximate solutions to~\eqref{eq-initial-pb} with fast algorithms is to solve this problem on a finite discrete grid $\Gg\subset \TT$.
Denoting by $P$ the cardinal of the grid $\Gg$, and by $g\in \TT^P$ the finite
 sequence of elements of $\Gg$, the idea is to solve $\Pp_\la(y_0)$ (or~\ref{eq-constrained-pbm})
with the additional constraint that $m=\sum_{i=1}^P a_i \delta_{g_i}$ for some $a\in \RR^P$.

This is nothing but the so-called basis pursuit denoising problem~\cite{chen1999atomi}, also known as the Lasso~\cite{tibshirani1996regre} in statistics.
Indeed, defining the linear operator $\Psi$ through 
\eq{
	\Psi a = \Phi m= \sum_{i=1}^P (\Phi \delta_{g_i})a_i, 
}
the problem amounts to:
\eql{\label{eq-lasso-discr-finite}\tag{$\tilde{\Pp}_\la^\Gg(y_0)$}
	\umin{ a \in \RR^P } \frac{1}{2}\norm{y_0 - \Psi a}^2 + \la \norm{a}_1
	\qwhereq
	\norm{a}_1 = \sum_{i=1}^P |a_i|,
}
where $\Psi:\RR^P \rightarrow L^2(\TT)$ is a linear operator ($L^2(\TT)$ may as well be replaced with $\RR^Q$ or any Hilbert space),
 and $a_i$ denotes the mass at each point $i$ of the grid.
In the noiseless case, the exact reconstruction problem reads:
\eql{\label{eq-lasso-discr-noiseless-finite}\tag{$\tilde{\Pp}_0^\Gg(y_0)$}
 	\umin{ \Psi a = y_0 } \norm{a}_1.
}
The aim of the present section is to study the asymptotic of Problems~\eqref{eq-lasso-discr-finite} and~\eqref{eq-lasso-discr-noiseless-finite} as the stepsize of the grid $\Gg$ vanishes. To this end, we keep the framework of measures and we reformulate the constraint that $\supp(m) \subset \Gg$, i.e. that $m$ can be written as $m=m_{a,x}$, where $x = (x_1,\ldots,x_N)\in \Gg^N$. Recall that the notation $m_{a,x}$ hints that $a_i\neq 0$ for all $i$ and that the $x_i$'s are all distinct, so that in general $N\leq P$. We thus adopt the following penalization term
\begin{align}
	\normTVG{m}= \sup \enscond{ \int \psi \d m }{  
		\psi\in C(\TT), \forall t\in \Gg \ |\psi(t)| \leq 1 },
\end{align}
so that $\normTVG{m}=+\infty$ when $\supp(m) \not\subset \Gg$, and $\sum_{i=1}^N |a_i|$ otherwise.

Problems~\eqref{eq-lasso-discr-finite} and~\eqref{eq-lasso-discr-noiseless-finite} are then respectively  equivalent to:
\eql{\label{eq-lasso-discr}\tag{$\Pp_\la^\Gg(y_0)$}
	\umin{m \in \Mm(\TT)} \frac{1}{2} \norm{\Phi(m) - y_0}^2 + \la \normTVG{m}
}
and 
\eql{\label{eq-lasso-discr-noiseless}\tag{$\Pp_0^\Gg(y_0)$}
	\umin{ \Phi m = y_0 } \normTVG{m},
}

% Gab: removed "may be derived directly in a finite dimensional framework, and more importantly that they"
 
Let us stress the fact that the results of Sections~\ref{subsec-certif-discr} and~\ref{sec-discr-robustness} hold for \textit{any finite dimensional matrix} $\Psi \in \RR^{P \times Q}$ or linear operator $\Psi: \RR^P\rightarrow L^2(\TT)$: the columns of $\Psi$ need not be the samples of a convolution operator.

%%%%%%%%%%%%%%%%%%%%%%%%%%%%%%%%%%%%%%%%%%%%%%%%%%%%%%%%%
\subsection{Certificates over a Discrete Grid}
\label{subsec-certif-discr}

%\todo{The following notations needed to be introduced somewhere. Check where you want to put it.} 
% Je l'ai mis dans la partie notation
As in Section~\ref{sec-preliminaries}, we may compute the subdifferential of the $\ell^1$ norm. For $m=m_{a,x}=\sum_{i=1}^N a_i\delta_{x_i}$ with support in $\Gg$:
\begin{align}
 	\partial \normTVG{m} = \enscond{\eta\in C(\TT)}{ \norm{\eta}_{\infty,\Gg} \leq 1, 
		\foralls i=1,\ldots,N, \; \eta(x_i)=\sign(a_i) }.
\end{align}
where 
\eq{
	\norm{\eta}_{\infty,\Gg}= \max\enscond{ |\eta(t)| }{ t\in \Gg }.
}

We also introduce the corresponding dual problems:
\eql{\label{eq-initial-dualbisdiscr}\tag{$\Dd_\la^\Gg(y_0)$}
	\umin{ \norm{\Phi^* p}_{\infty,\Gg} \leq 1} \left\|\frac{y_0}{\la}-p\right\|_2^2,
}
\eql{\label{eq-constrained-dualdiscr}\tag{$\Dd_0^\Gg(y_0)$}
	\usup{ \norm{\Phi^* p}_{\infty,\Gg} \leq 1} \dotp{y_0}{p}.
}

\begin{rem}
\label{rem-y-imphi}
Let us denote by $G$ the image by $\Phi$ of all measures with support in $\Gg$.
It may happen (for instance if the grid is too rough) that $y_0\notin G$, in which
 case~\eqref{eq-lasso-discr-noiseless} is not feasible and~\eqref{eq-constrained-dualdiscr} has infinite value. But~\eqref{eq-lasso-discr} is then equivalent to $\Pp_\la^\Gg(y_{0,G})$ where $y_0=y_{0,G}+y_{0,G^\perp}$ is an orthogonal decomposition. 
Problem~\eqref{eq-lasso-discr} is thus an approximation of $\Pp_0^\Gg(y_{0,G})$,
 and the relevant dual problems are $\Dd_\la^\Gg(y_{0,G})$ and $\Dd_0^\Gg(y_{0,G})$.
For the sake of simplicity, we shall assume from now on that $y_0\in G$, but the reader may
keep in mind that this hypothesis
 can be withdrawn by replacing $y$ with $y_{0,G}$.
\end{rem}

In view of Remark~\ref{rem-y-imphi}, we observe that problems~\eqref{eq-initial-dualbisdiscr} and 
\eqref{eq-constrained-dualdiscr} are in fact finite dimensional. Indeed, 
 their constraints being invariant by addition of elements of $G^\perp$,
  we may consider their quotient with the space $G^\perp$.
Therefore the condition $p\in L^2(\TT)$ may be reduced to $p\in G$ where $G$ is a finite dimensional space.

As a consequence, a solution to~\eqref{eq-constrained-dualdiscr} always exists,
 so that we may define the \textit{discrete minimal norm certificate}:
 \begin{align}
	\eta_0^\Gg=\Phi^* p_0^\Gg,&  
	\qwhereq
	p_0^\Gg=\uargmin{p}
		\enscond{ \norm{p}_2 }{ p \mbox{ is a solution of }\eqref{eq-constrained-dualdiscr} }.
\label{eq-min-norm-certifdiscr}
\end{align}
The solutions of~\eqref{eq-lasso-discr} and 
\eqref{eq-initial-dualbisdiscr}
(resp.~\eqref{eq-lasso-discr-noiseless} and~\eqref{eq-constrained-dualdiscr})
are related by the extremality conditions~\eqref{eq-extremal-cdt} (resp.~\eqref{eq-extremal-constrained}) where the total variation is replaced with its discrete counterpart $\normTVG{\cdot}$. 

%%%%%%%%%%%%%%%%%%%%%%%%%%%%%%%%%%%%%%%%%%%%%%%%%%%%%%%%%
\subsection{Noise Robustness}
\label{sec-discr-robustness}

As in the continuous case (cf. Section~\ref{sec-noise-robust}), the support of the solutions of $\Pp_\la^\Gg(y_0+w)$ for $\la \to 0^+$ and $\norm{w}_2=O(\la)$ is governed by the minimal norm certificate. We introduce here the discrete counterpart of the extended support of a measure.

\begin{defn}[Extended support]
Let $m_0\in \Mm(\TT)$ such that $y_0=\Phi(m_0)\in G$, and let $\eta^\Gg_0$ 
be the discrete minimal norm certificate defined in~\eqref{eq-min-norm-certifdiscr}.
 The extended support of $m_0$ relatively to $\Gg$ is defined as
\begin{align}
	\extg(m_0) =\enscond{ t\in \Gg }{ |\eta_0^\Gg(t)|= 1 },
\end{align}
and the extended signed support relatively to $\Gg$ as
\begin{align}
	\extsg(m_0) = \enscond{ (t,v) \in \Gg \times \{-1,+1\} }{ \eta_0^\Gg(t)= v }.
\end{align}
\end{defn}

It is important to notice that the assumption $y_0\in G$ does not mean that the support of $m_0$ is included in $\Gg$, but that there exists  a measure with support included in $\Gg$ which produces the same observation $y_0$. Therefore the support
 of $m_0$ and its extended support may even  be disjoint.
 
As in the continuous case, notice that $m_0$ is a solution of~\eqref{eq-lasso-discr-noiseless}
if and only if $\ssupp(m_0) \subset \extsg(m_0)$.

\begin{thm}[Noise robustness, discrete case]
Let $m_0\in \Mm(\TT)$ such that $y_0=\Phi(m_0)\in G$. Then, there exists $\alpha>0, \la_0>0$,
 such that for $(\la,w) \in D_{\alpha,\la_0}$
 (defined in~\eqref{eq-constr-set}) any solution $\tilde{m}_{\la,w}$ of $\Pp_\la^\Gg(y_0+w)$ satisfies:
\begin{align}
 	\ssupp(\tilde{m}_{\la,w}) & \subset \extsg(m_0).
\end{align}
If, in addition, $\Phi_{\extg(m_0)}$ has full rank and $m_0$ is a solution of~\eqref{eq-lasso-discr-noiseless}, then the solution $\tilde{m}_{\la,w}$ is unique, $m_0$ is identifiable and 
choosing $\la = \norm{w}_2/\al$ ensures $\norm{\tilde{m}_{\la,w}-m}_{2,\Gg} = O(\norm{w})$, where 
\eq{
	\norm{\tilde{m}_{\la,w}-m}_{2,\Gg}^2 = \sum_{x\in \Gg} |m(\{x\})-\tilde{m}_{\la,w}(\{x\})|^2.
}
\label{thm-noise-robustness-discr}
\end{thm}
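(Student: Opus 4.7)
The plan is to transpose the arguments of Section~\ref{sec-noise-robust} to the discrete setting, which simplifies considerably because $\Gg$ is finite and spikes are rigidly pinned to grid points. First I establish a discrete analogue of Proposition~\ref{prop-gamma-convergence}: using the hypothesis $y\in G$ together with the decomposition $p = p_G + p_{G^\perp}$, the solutions of~\eqref{eq-initial-dualbisdiscr} and~\eqref{eq-constrained-dualdiscr} effectively lie in the finite-dimensional space $G$, and the extraction argument used in the proof of Proposition~\ref{prop-gamma-convergence} goes through verbatim, giving $p_\la^\Gg \to p_0^\Gg$ strongly in $L^2$ and hence $\eta_\la^\Gg \to \eta_0^\Gg$ uniformly on $\Gg$. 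Combined with the non-expansiveness of the projection in the formulation~\eqref{eq-initial-dualbisdiscr} (which yields $\norm{\tilde p_\la^\Gg - p_\la^\Gg}_2 \le \norm{w}_2/\la$), this produces the bound $\norm{\tilde\eta_\la^\Gg - \eta_\la^\Gg}_{\infty,\Gg} \le M\norm{w}_2/\la$ for some constant $M$ depending only on $\varphi$ and $\Gg$.

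To establish $\ssupp \tilde m_{\la,w}\subset \extsg(m_0)$, I exploit that finiteness of $\Gg$ makes the gap
\[ r = 1 - \max\left\{ |\eta_0^\Gg(t)| : t\in \Gg\setminus \extg(m_0)\right\} \]
strictly positive (with the convention $r=1$ if the maximum is over the empty set). Choosing $\alpha = r/(2M)$ and $\la_0$ small enough that $\norm{\eta_\la^\Gg - \eta_0^\Gg}_{\infty,\Gg}\le r/2$ for $\la\le \la_0$, the triangle inequality keeps $|\tilde\eta_\la^\Gg(t)|<1$ on $\Gg\setminus \extg(m_0)$ and preserves the sign $\eta_0^\Gg(t)$ wherever $\tilde\eta_\la^\Gg$ saturates on $\extg(m_0)$. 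The discrete extremality conditions then force $\ssupp \tilde m_{\la,w}\subset \extsg(m_0)$, which proves the first claim.

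For the second assertion, set $E = \extg(m_0)$, $s_0 = (\eta_0^\Gg(t))_{t\in E}$, and $a_0 = \restr{m_0}{E}$. Since $m_0$ solves~\eqref{eq-lasso-discr-noiseless}, one has $\supp m_0\subset E$ and $\Phi_E a_0 = y$. Identifiability follows by applying the first assertion at $(\la,w)=(0,0)$ to any other noiseless solution $m^\star$: then $\supp m^\star\subset E$, so $\Phi_E(\restr{m^\star}{E} - a_0) = 0$, and the full-rank hypothesis on $\Phi_E$ gives $m^\star = m_0$. For the noisy case, write $\tilde a = \restr{\tilde m_{\la,w}}{E}$; since $\sign \tilde a_i = s_{0,i}$ wherever $\tilde a_i\neq 0$, the vector $s_0$ lies in $\partial \norm{\tilde a}_1$, and the restriction of~\eqref{eq-lasso-discr} to $\RR^E$ (strictly convex thanks to injectivity of $\Phi_E$) yields the optimality equation
\[ \Phi_E^*(\Phi_E \tilde a - y - w) + \la s_0 = 0. \]
Inverting with $(\Phi_E^* \Phi_E)^{-1}$ and using $\Phi_E a_0 = y$ gives $\tilde a - a_0 = (\Phi_E^* \Phi_E)^{-1}(\Phi_E^* w - \la s_0)$, so that choosing $\la = \norm{w}_2/\alpha$ produces the claimed $O(\norm{w})$ rate. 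Uniqueness of $\tilde m_{\la,w}$ follows from strict convexity of the restricted problem together with the already established containment $\supp \tilde m_{\la,w}\subset E$.

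The main technical obstacle is the discrete analogue of Proposition~\ref{prop-gamma-convergence}, which relies on the hypothesis $y\in G$ to reduce the dual problems to the finite-dimensional subspace $G$. Once this is in place, the analysis proceeds without any non-degeneracy condition on the second derivative of $\eta_0^\Gg$, in contrast with Theorem~\ref{thm-noise-robustness}, because spikes are pinned to grid points and the whole stability analysis boils down to inverting the well-conditioned Gram matrix $\Phi_E^*\Phi_E$.
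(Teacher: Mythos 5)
Your overall strategy coincides with the paper's: non-expansiveness of the projection in~\eqref{eq-initial-dualbisdiscr} plus convergence $\eta_\la^\Gg\to\eta_0^\Gg$ gives $\normig{\tilde\eta_\la-\eta_0^\Gg}\le C\norm{w}_2/\la+o(1)$, the finiteness of $\Gg$ makes the gap $r$ strictly positive, and the extremality conditions then pin the signed support inside $\extsg(m_0)$; the $\ell^2$ bound is obtained by inverting the first-order condition on $E=\extg(m_0)$. Your identifiability and uniqueness arguments (restriction to $\RR^E$ and injectivity of $\Phi_E$) are also fine and only slightly more explicit than what the paper writes.

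There is, however, one incorrect step. The displayed optimality equation
$\Phi_E^*(\Phi_E\tilde a - y - w)+\la s_0=0$
does not hold in general. Fermat's rule gives $\Phi_E^*(\Phi_E\tilde a - y - w)+\la u=0$ for the \emph{specific} subgradient $u=\restr{\tilde\eta_\la}{E}\in\partial\norm{\tilde a}_1$; the fact that $s_0$ also belongs to $\partial\norm{\tilde a}_1$ does not entitle you to substitute it, because at points $t\in E$ where $\tilde a(t)=0$ (and the theorem explicitly allows $\supp\tilde m_{\la,w}\subsetneq E$, since $E$ may strictly contain $\supp m_0$) one only knows $|\tilde\eta_\la(t)|\le 1$, not $\tilde\eta_\la(t)=s_0(t)=\pm1$. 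Consequently your closed-form expression for $\tilde a-a_0$ is generally false. The conclusion is unharmed: replacing $s_0$ by $\restr{\tilde\eta_\la}{E}$ and using only $\normig{\restr{\tilde\eta_\la}{E}}\le 1$ still yields $\norm{\tilde a-a_0}_2=O(\norm{w}_2+\la)=O(\norm{w}_2)$ for $\la=\norm{w}_2/\alpha$, which is exactly how the paper argues. You should make that substitution to render the proof correct.
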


\begin{proof}
The proof is essentially the same as in the continuous case, therefore we only sketch it. To simplify the notation, we write $J=\extg(m_0)$. The solutions of~\eqref{eq-initial-dualbisdiscr} converge to $p_0^\Gg\in L^2(\TT)$ for $\la \to 0^+$, where $\Phi^*p_0^\Gg=\eta_0^\Gg$ is the discrete minimal norm certificate.

By the triangle inequality
\begin{align*}
\normig{\tilde{\eta}_\la - \eta_0^\Gg}&\leq \underbrace{\normig{\tilde{\eta}_\la-\eta_\la^\Gg}}_{\leq C\frac{\norm{w}_2}{\la} } + \normig{\eta_\la^\Gg-\eta_0^\Gg}
\end{align*}
Thus, there exist two constants $\alpha >0$ and $\la_0>0$, such that for $\frac{\norm{w}_2}{\la}\leq \alpha $ and $0<\la<\la_0$, $|\tilde{\eta}_\la(x)|<1$ for any $x\in \Gg\setminus J$.
Then, the primal-dual extremality conditions imply that for any solution $\tilde{m}_{\la,w}$ of $\Pp_\la^\Gg(y_0+w)$, one has $\supp(\tilde{m}_{\la,w}) \subset J$ and equality of the signs. 

Now, if $\Phi_J$ has full rank, we can invert the extremality condition:
\begin{align*}
	\frac{1}{\la}\Phi_J^* \pa{ 
		y_0 +w - \Phi_J( \restr{\tilde{m}_{\la,w}}{J} ) 
	} 
	&= \restr{\tilde{\eta}_\la}{J},\\
\mbox{so that} \quad \restr{\tilde{m}_{\la,w}}{J} = \restr{m}{J} + \Phi_J^+ w-& \la (\Phi_J \Phi_J^*)^{-1}  \restr{ \tilde{\eta}_\la }{J}.
\end{align*}
Observing that $\normig{ \restr{\tilde{\eta}_\la}{J} }\leq 1$, we obtain the $\ell_2$-robustness result.
\end{proof}

Theorem~\ref{thm-noise-robustness-discr} is analogous to Lemma~\ref{lem-robust-boites} for the continuous problem.
The discrete nature of the problem makes its conclusions more precise.
Although the $\ell^2$-robustness results are similar to those of Theorem~\ref{thm-noise-robustness},
the focus here is a bit more general, in the sense that this theorem does not assert that the
support of the recovered measures matches the support of the input measure $m_0$.
In fact, if $m_0$ is a solution to~\eqref{eq-lasso-discr-noiseless-finite}, $\ssupp(m_0)\subset \extsg(m_0)$, so  that the recovered solutions to $\Pp_\la^\Gg(y_0+w)$ have in general more spikes 
than $m_0$, and the spikes in $\extg(m_0)\setminus \supp(m_0)$ must vanish as $\la\to 0, \norm{w}_2 \to 0$.

In order to get the exact recovery of the signed support for small noise, we may assume in addition that
$\ssupp(m_0)=\extsg(m_0)$ so as to obtain a result analogous to Theorem~\ref{thm-noise-robustness}.
Precisely, we obtain the following theorem which was initially proved by Fuchs~\cite{fuchs2004on-sp}.
First, we introduce a pre-certificate.

\begin{defn}[Fuchs pre-certificate]
Let $m_0 \in \Mm(\TT)$ such that $\supp(m_0)\subset \Gg$. We define the Fuchs pre-certificate as
\begin{align}
\etaF =  \uargmin{\eta =\Phi^* p, p \in L^2} \norm{p}
	\quad\text{subject to}\quad
		\restr{\eta}{\supp m_0} = \sign( \restr{m_0}{\supp m_0}).
\label{eq-fuchs-certif}
\end{align}
\end{defn}
This pre-certificate, introduced in \cite{fuchs2004on-sp}, is a certificate for $m_0$ if and only if $\normig{\etaF} \leq 1$, in which case it is equal to the discrete minimal norm pre-certificate $\eta_0^\Gg$.
 
If $\Phi_{\supp m_0}$ has full rank, then $\etaF$ can be computed by solving a linear system:
\eq{
	\etaF = \Phi^* \Phi_{I}^{+,*} \sign( \restr{m}{I}) 	
	\qwhereq
  I=\supp m_0 
  \qandq \Phi_{I}^{+,*} = \Phi_{I} (\Phi_{I}^*\Phi_{I})^{-1}. 
}

\begin{cor}[Exact support recovery, discrete case,\cite{fuchs2004on-sp}]\label{cor-fuchs}
	Let $m_0 \in \Mm(\TT)$ such that $\supp(m_0) \subset \Gg$, and that $\Phi_{\supp m_0}$ has full rank.
	 If $|\etaF(t)|<1$ for all $t\in \Gg\setminus \supp m_0$,
	  then $m_0$ is identifiable for $\Gg$ and there exists $\alpha>0, \la_0>0$, such that for
	   $(\la,w) \in D_{\alpha,\la_0}$ the solution $\tilde{m}_{\la,w}$ of $\Pp_\la^\Gg(y+w)$
	    is unique and satisfies $\ssupp(\tilde{m})=\ssupp(m_0)$. Moreover 
	\eql{\label{eq-expression-discr-explicit}
		\restr{\tilde{m}_{\la,w}}{I} = \restr{m_0}{I} + \Phi_{I}^+ w - \la (\Phi_{I}\Phi_{I}^*)^{-1} \sign(\restr{m_0}{I}),
	}
  where $I=\supp m_0$.
\end{cor}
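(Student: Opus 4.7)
The plan is to show that the strict inequality hypothesis forces $\etaF=\eta_0^\Gg$, which collapses the extended signed support to $\ssupp(m_0)$, and then to invoke Theorem~\ref{thm-noise-robustness-discr} together with a short bootstrap to obtain~\eqref{eq-expression-discr-explicit}.

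First I would check that $\etaF\in \partial\normTVG{m_0}$: the defining constraints of $\etaF$ give $\etaF(x_i)=\sign(a_{0,i})$ on $I:=\supp(m_0)$, hence $|\etaF(x_i)|=1$, while the hypothesis yields $|\etaF(t)|<1$ on $\Gg\setminus I$. The observation recalled just before the statement of the corollary then gives $\etaF=\eta_0^\Gg$, so $\extg(m_0)=I$ and $\extsg(m_0)=\ssupp(m_0)$. Identifiability of $m_0$ on $\Gg$ follows by a standard dual argument: for any solution $m'$ of~\eqref{eq-lasso-discr-noiseless}, the extremality condition~\eqref{eq-extremal-constrained} (with $\normTVG{\cdot}$ in place of $\normTV{\cdot}$) forces $\supp(m')\subset \extg(m_0)=I$, and $\Phi_I$ injective then forces $m'=m_0$.

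Next I would apply Theorem~\ref{thm-noise-robustness-discr} with $J=I$: the full-rank hypothesis on $\Phi_I$ is exactly the one needed for $\Phi_{\extg(m_0)}$. This produces $\alpha,\la_0>0$ such that for $(\la,w)\in D_{\alpha,\la_0}$, the solution $\tilde{m}_{\la,w}$ is unique with $\supp(\tilde{m}_{\la,w})\subset I$ and matching signs where nonzero. Moreover the identity derived in its proof,
\[
\restr{\tilde{m}_{\la,w}}{I} = \restr{m_0}{I} + \Phi_I^+ w - \la(\Phi_I\Phi_I^*)^{-1}\restr{\tilde{\eta}_\la}{I},
\]
combined with $\normig{\tilde{\eta}_\la}\leq 1$ and $\norm{w}_2\leq \alpha\la$, shows $\restr{\tilde{m}_{\la,w}}{I} = \restr{m_0}{I}+O(\la)$.

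Finally, I would shrink $\la_0$ further so that this perturbation is smaller than $\frac{1}{2}\min_i |a_{0,i}|$ on $D_{\alpha,\la_0}$; then every entry of $\restr{\tilde{m}_{\la,w}}{I}$ is nonzero with sign $\sign(a_{0,i})$, giving $\ssupp(\tilde{m}_{\la,w})=\ssupp(m_0)$. The extremality condition~\eqref{eq-extremal-cdt} then forces $\restr{\tilde{\eta}_\la}{I}=\sign(\restr{m_0}{I})$, and substituting this back in the displayed identity yields~\eqref{eq-expression-discr-explicit}. The main subtlety is exactly this last bootstrap: the displayed identity contains $\tilde{\eta}_\la$ implicitly through $\restr{\tilde{\eta}_\la}{I}$, so one must first rule out the possibility that noise or regularization annihilates some spike, and the linear control $\restr{\tilde{m}_{\la,w}}{I}=\restr{m_0}{I}+O(\la)$ is precisely what is needed to do so before identifying $\restr{\tilde{\eta}_\la}{I}$ with $\sign(\restr{m_0}{I})$.
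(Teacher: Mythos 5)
Your argument is correct and is exactly the derivation the paper intends: the corollary is stated without a written proof as a consequence of Theorem~\ref{thm-noise-robustness-discr}, and your steps (strict inequality forces $\etaF=\eta_0^\Gg$ hence $\extsg(m_0)=\ssupp(m_0)$, apply the theorem with $J=I$, then use the $O(\la)$ control to rule out vanishing spikes before identifying $\restr{\tilde{\eta}_\la}{I}$ with $\sign(\restr{m_0}{I})$) fill in precisely the omitted details. The final bootstrap you flag is indeed the only nontrivial point, and you handle it correctly.
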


The condition $|\etaF(t)|<1$ for all $t\in \Gg\setminus \supp m_0$ is often called the irrepresentability
 condition in the statistics literature, see~\cite{Zhao-irrepresentability}. This condition
  can be shown to be almost a necessary and sufficient condition to ensure exact recovery of
   the support of $m_0$. For instance, if $|\etaF(t)|>1$ for some $t\in \Gg\setminus \supp m_0$,
    one can show that for all $\la > 0$ $\supp(\tilde{m}_\la) \neq \supp m_0$ where $\tilde{m}_\la$ is any
     solution of $\Pp_\la^\Gg(y_0)$, see~\cite{vaiter2011robust}.
In our framework, we see that this irrepresentability condition means that the precertificate $\etaF$
 is indeed a certificate (so that it is equal to the minimal norm certificate), and that its 
saturation set is equal to the support of $m_0$. %When $\etaF$ is not a certificate, this means that $\ssupp(m_0) \neq \extsg(m_0)$
%so that the signed support of $m_0$ is not stable.

For deconvolution problems, an important issue is that Corollary~\ref{cor-fuchs} is
 useless when studying the stability of the original infinite dimensional problem~\eqref{eq-initial-pb}.
  Indeed, the pre-certificate~\eqref{eq-fuchs-certif} is not constrained to have vanishing derivatives,
so that it generally takes some values strictly greater than $1$  for a generic discrete input measure $m_0$.
When the stepsize of the grid is small enough, such values are sampled and $\normig{\etaF}$ necessarily becomes strictly larger than one.
  As detailed in Section~\ref{sec-vanishing}, when shifting from the discrete grid setting
   to the continuous setting, the natural pre-certificate to consider is the vanishing derivative pre-certificate $\etaV$ defined in~\eqref{eq-vanishing-der-certif}, and not the pre-certificate $\etaF$.

%%%%%%%%%%%%%%%%%%%%%%%%%%%%%%%%%%%%%%%%%%%%%%%%%%%%%%%%%%%%%%%%%%%%%%%%%
\subsection{Structure of the Extended Support for Thin Grids}
\label{sec-extended}

In the previous section, we have introduced the notion of extended signed support of a measure $m_0$ relatively to a grid $\Gg$,
and we have proved that this set, $\extsg{m_0}$, contains the  signed supports of all the reconstructed measures for small noise.
In this section, we focus on the structure of the extended support. We show that, if the
 support of $m_0$ belongs to the grid for a sufficiently small stepsize and if the Non Degenerate Source Condition holds,
  the extended signed support consists in the signed support of $m_0$ and possibly one immediate neighbor with the same sign for each spike. 
Therefore, when the grid stepsize is small enough, the support of the measure is generally not stable
for the discrete problem, but the support of the reconstructed measure is a close approximation
 of the original one.

From now on, for the sake of simplicity, we consider dyadic grids $\Gg_n=\enscond{ \frac{j}{2^n} }{ 0\leq j \leq 2^n-1 }$. The constraint sets in $\Dd_\la^{\Gg_n}(y_0)$ and~\eqref{eq-initial-dual} are denoted respectively by
\begin{align}
	C_n&=\enscond{ p\in L^2(\TT) }{
		 \left|(\Phi^*p)\left(\frac{j}{2^n}\right)\right|\leq 1,\ 0\leq j\leq 2^n }, \\
	\mbox{and }
	C &= \enscond{ p\in L^2(\TT) }{
			\left\|\Phi^*p\right\|_{\infty}\leq 1
		}
		= \bigcap_{n\in \NN} C_n.
\end{align}

The structure of $\extsg(m_0)$ for large $n$ is intimately related to the convergence of $p^{\Gg_n}_0$ to $p_0$. First, let us notice the following result, whose proof is given in Appendix~\ref{sec-proof2}.

\begin{prop}[Convergence for fixed $\la$]
Let $m_0\in \Mm(\TT)$. Then, for any $\la >0$,
\begin{align}
 \lim_{n\to +\infty} p_\la^{\Gg_n}&= p_\la \mbox{ for the } L^2(\TT) \mbox{(strong) topology},\\
\mbox{ and } \lim_{n\to +\infty} \eta_\la^{\Gg_n}&= \eta_\la \mbox{ for the topology of the uniform convergence}.
\end{align}
Moreover, if there exists a solution to the continuous dual problem~\eqref{eq-constrained-dual},
\begin{align}
 \lim_{\la \to 0^+} \lim_{n\to +\infty} p_\la^{\Gg_n}= p_0, \mbox{ and } \lim_{\la \to 0^+} \lim_{n\to +\infty} \eta_\la^{\Gg_n}= \eta_0. 
 \label{eq-double-limite}
\end{align}
\label{prop-cv-fixedla}
\end{prop}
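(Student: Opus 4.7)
The plan is to exploit the projection reformulation of the dual problem. Recalling~\eqref{eq-initial-dualbis}, we have $p_\la = \mathrm{proj}_{C}(y/\la)$ and $p_\la^{\Gg_n} = \mathrm{proj}_{C_n}(y/\la)$, where $C$ and $C_n$ are the closed convex sets defined just before the statement, with $C_n \supset C_{n+1} \supset C$ and $C = \bigcap_n C_n$. The proof then amounts to a standard stability result for projections onto a decreasing sequence of closed convex sets whose intersection is $C$.

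First, since $C \subset C_n$, we have $\|y/\la - p_\la^{\Gg_n}\|_2 \leq \|y/\la - p_\la\|_2$, so the sequence $(p_\la^{\Gg_n})_n$ is bounded in $L^2(\TT)$. I would extract from an arbitrary subsequence a further subsequence, still denoted $(p_\la^{\Gg_n})$, weakly converging to some $p^\star \in L^2(\TT)$. The main step is to show $p^\star \in C$, i.e.\ $\|\Phi^* p^\star\|_\infty \leq 1$. Since $\varphi \in C^2(\TT)$, the operator $\Phi^*$ is compact from $L^2(\TT)$ into $C(\TT)$ (the image is contained in a bounded set of $C^2(\TT)$, hence equicontinuous), so the weak convergence of $p_\la^{\Gg_n}$ upgrades to the uniform convergence $\Phi^* p_\la^{\Gg_n} \to \Phi^* p^\star$ in $C(\TT)$. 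Then, for any $t \in \TT$, pick a sequence $t_n \in \Gg_n$ (for example the dyadic truncation of $t$) with $t_n \to t$. Writing
\[
|(\Phi^* p^\star)(t)| \leq |(\Phi^* p^\star)(t) - (\Phi^* p_\la^{\Gg_n})(t_n)| + |(\Phi^* p_\la^{\Gg_n})(t_n)|,
\]
the first term tends to $0$ by uniform convergence combined with the equicontinuity of $(\Phi^* p_\la^{\Gg_n})_n$, while the second is bounded by $1$ since $t_n \in \Gg_n$. Hence $p^\star \in C$.

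Next, I would show $p^\star = p_\la$. The inequality $\|y/\la - p_\la^{\Gg_n}\|_2 \leq \|y/\la - p_\la\|_2$, together with the lower semicontinuity of the norm for the weak topology, gives
\[
\|y/\la - p^\star\|_2 \leq \liminf_n \|y/\la - p_\la^{\Gg_n}\|_2 \leq \|y/\la - p_\la\|_2,
\]
and since $p_\la$ is the unique projection of $y/\la$ onto $C$ and $p^\star \in C$, one must have $p^\star = p_\la$. The same inequality also shows $\|y/\la - p_\la^{\Gg_n}\|_2 \to \|y/\la - p_\la\|_2$, which combined with the weak convergence yields the strong $L^2$ convergence $p_\la^{\Gg_n} \to p_\la$. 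Because the choice of subsequence was arbitrary, the whole sequence converges. Uniform convergence of $\eta_\la^{\Gg_n} = \Phi^* p_\la^{\Gg_n}$ towards $\eta_\la$ follows immediately from the continuity (indeed compactness) of $\Phi^*$ from $L^2(\TT)$ into $C(\TT)$, exactly as in the last part of the proof of Proposition~\ref{prop-gamma-convergence}.

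Finally, the double limit~\eqref{eq-double-limite} is obtained by composing the two convergences: for every $\la > 0$, $\lim_n p_\la^{\Gg_n} = p_\la$ strongly in $L^2$, and Proposition~\ref{prop-gamma-convergence} then gives $\lim_{\la \to 0^+} p_\la = p_0$ strongly in $L^2$ and $\lim_{\la \to 0^+} \eta_\la = \eta_0$ uniformly. The main obstacle is the first step above, namely passing from the pointwise constraint on the grid $\Gg_n$ to a pointwise constraint on the whole of $\TT$ in the limit; this is where the $C^2$-regularity of $\varphi$ is crucial, as it provides the equicontinuity needed to transfer the discrete constraints to their continuous counterpart via a diagonal argument on the dyadic grids.
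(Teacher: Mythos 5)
Your proof is correct and follows essentially the same route as the paper's (Appendix C): boundedness of $(p_\la^{\Gg_n})_n$, extraction of a weak limit, identification of that limit with $P_C(y/\la)$ via weak lower semicontinuity of the norm, upgrade to strong convergence from norm convergence, and the double limit via Proposition~\ref{prop-gamma-convergence}. The only divergence is in showing that the weak limit lies in $C$: you invoke compactness of $\Phi^*$ and a density argument on the dyadic grids, whereas the paper gets this in one line from the fact that each $C_n$ is weakly closed and the $C_n$ are nested with intersection $C$ --- so the $C^2$-regularity of $\varphi$ is not actually needed for that step, though your argument is valid and would extend to non-nested grids whose union is dense.
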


Proposition~\ref{prop-cv-fixedla} simply states that the projection onto convex sets $C_n$ which
converge (in the sense of set convergence) to $C$ converges to the projection onto $C$. However, 
the case $\la=0$ is not as straightforward, and for instance one cannot easily swap the limits in~\eqref{eq-double-limite}.
In fact, given any decreasing sequence of polyhedra $C_n$, it is not true in general that 
the minimal norm solution of $\sup_{p\in C_n} \dotp{y_0}{p}$ should converge to
the minimal norm solution of  $\sup_{p\in C} \dotp{y_0}{p}$ where $C=\bigcap_{n\in \NN}C_n$.
 As a consequence it is not clear to us whether this convergence always holds for polyhedra of the form 
\eq{
 	C_n = \enscond{ p\in L^2 }{ \norm{\Phi^*p}_{\infty,\Gg_n}\leq 1 }.
}

However, when the spikes locations belong to the grid for $n$ large enough, the convergence of the minimal norm certificates holds.
In the case of dyadic grids, this is equivalent to $m_0\in \Mm(\TT)$ being a \textit{discrete dyadic measure}, i.e. such that for some $n_0\in \NN$:
\begin{align}
	m = \sum_{i=1}^N a_{i} \delta_{x_{i}}, 
	\qwithq 
	x_i=\frac{j_i}{2^{n_0}} \mbox{ and } 0\leq j_i \leq 2^{n_0}-1.
\label{eq-dyadic-measure}
\end{align}

The proofs given below make use of a remark given in~\cite{Candes-toward}: if a solution of the continuous problem~\eqref{eq-constrained-pbm} has support in the grid $\Gg$, then it is also a solution of the discrete problem~\eqref{eq-lasso-discr-noiseless}.

\begin{prop}[Convergence for dyadic measures]
Let $m_0\in \Mm(\TT)$ be a discrete dyadic measure (see~\eqref{eq-dyadic-measure}), and assume
 that the (possibly degenerate) source condition holds. Then
\begin{align}
\lim_{n\to +\infty} p_{0}^{\Gg_n}&=p_0 \mbox{ for the } L^2 \mbox{ (strong) topology},\\
\mbox{and }\lim_{n\to +\infty} \eta_{0}^{\Gg_n(i)}&=\eta_0^{(i)} \mbox{ for } 0\leq i \leq 2, \mbox{ in the sense of the uniform convergence,}
\end{align}
where $\eta_0 = \Phi^* p_0$ (resp. $\eta_{0}^{\Gg_n}=\Phi^* p_{0}^{\Gg_n}$) denotes the corresponding minimal norm certificate.
\label{prop-cv-dyadic}
\end{prop}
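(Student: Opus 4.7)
The strategy is to exploit two facts simultaneously: first, $m_0$ itself is a solution of both the continuous problem \eqref{eq-constrained-pbm} and the discrete ones $\Pp_0^{\Gg_n}(y)$ for $n\ge n_0$; second, $p_0$ is feasible for every discrete dual $\Dd_0^{\Gg_n}(y)$ (since $C \subset C_n$). Together these give a uniform $L^2$ bound on $p_0^{\Gg_n}$, from which I extract a weak limit, identify it with $p_0$, upgrade to strong convergence, and conclude uniform convergence of the certificates and their first two derivatives via the same inequality already used in Proposition~\ref{prop-gamma-convergence}.

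\textbf{Step 1 (double optimality of $m_0$).} The source condition ensures the existence of a solution to \eqref{eq-constrained-dual}, and in particular $p_0$ is well defined; by the extremality conditions $m_0$ is a solution of \eqref{eq-constrained-pbm}. As $m_0$ has support in $\Gg_{n_0}\subset \Gg_n$ for every $n\ge n_0$, the remark of \cite{Candes-toward} recalled at the beginning of this section shows that $m_0$ is also a solution of \eqref{eq-lasso-discr-noiseless} on the grid $\Gg_n$. By strong duality (in both problems) and feasibility of $p_0$ in $\Dd_0^{\Gg_n}(y)$,
\begin{align*}
 \dotp{y}{p_0^{\Gg_n}} = \normTV{m_0} = \dotp{y}{p_0},
\end{align*}
and by minimality of the norm of $p_0^{\Gg_n}$ among solutions of $\Dd_0^{\Gg_n}(y)$, $\norm{p_0^{\Gg_n}}_2 \le \norm{p_0}_2$ for $n\ge n_0$.

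\textbf{Step 2 (weak limit and passage to the constraint).} The sequence $(p_0^{\Gg_n})$ is bounded in $L^2(\TT)$, so some subsequence $p_0^{\Gg_{n_k}}$ weakly converges to $p^\star \in L^2(\TT)$. Since $\phi\in C^2(\TT)$, one has the pointwise bound $|\Phi^*p(t)-\Phi^*p(t')|\le \norm{\phi'}_\infty \norm{p}_2 |t-t'|$, so the family $\{\Phi^* p_0^{\Gg_{n_k}}\}_k$ is uniformly Lipschitz and uniformly bounded. By Arzel\`a--Ascoli, up to a further extraction $\Phi^* p_0^{\Gg_{n_k}}$ converges uniformly to some $g\in C(\TT)$, and weak-$L^2$ convergence forces $g=\Phi^* p^\star$. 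For any $t\in \TT$, picking $t_k \in \Gg_{n_k}$ with $|t-t_k|\le 2^{-n_k}$, equicontinuity plus $|\Phi^* p_0^{\Gg_{n_k}}(t_k)|\le 1$ passes to the limit to give $|\Phi^* p^\star(t)|\le 1$. Hence $p^\star$ is admissible for~\eqref{eq-constrained-dual}, and $\dotp{y}{p^\star}=\normTV{m_0}$ makes it optimal. Since $\norm{p^\star}_2\le \liminf_k \norm{p_0^{\Gg_{n_k}}}_2 \le \norm{p_0}_2$, the definition of $p_0$ as the minimal-norm solution forces $p^\star=p_0$.

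\textbf{Step 3 (strong convergence and derivatives).} As the limit is independent of the subsequence, the full sequence satisfies $p_0^{\Gg_n} \rightharpoonup p_0$, and the sandwich $\norm{p_0}_2 \le \liminf \norm{p_0^{\Gg_n}}_2 \le \limsup \norm{p_0^{\Gg_n}}_2 \le \norm{p_0}_2$ yields norm convergence, hence strong $L^2$ convergence. Writing, exactly as in the proof of Proposition~\ref{prop-gamma-convergence}, $\eta_0^{\Gg_n(i)}(t)-\eta_0^{(i)}(t)=\int \varphi^{(i)}(t-x)(p_0^{\Gg_n}-p_0)(x)\d x$ and applying Cauchy--Schwarz gives a bound of the form $C\norm{p_0^{\Gg_n}-p_0}_2$ uniform in $t$, so $\eta_0^{\Gg_n(i)} \to \eta_0^{(i)}$ uniformly for $i\in\{0,1,2\}$.

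\textbf{Main obstacle.} The only delicate step is transferring the grid-wise pointwise constraint $|\Phi^* p_0^{\Gg_n}(t)|\le 1$ on $\Gg_n$ into the uniform constraint $\normi{\Phi^* p^\star}\le 1$ at the weak limit. This would fail for a generic sequence of polyhedra $C_n\downarrow C$, and it is precisely here that the regularity of $\phi$ and the uniform $L^2$ bound on $p_0^{\Gg_n}$ (itself a consequence of the Cand\`es--Fernandez-Granda remark, hence of $\supp m_0\subset \Gg_{n_0}$) are used through an Arzel\`a--Ascoli argument.
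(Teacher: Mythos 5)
Your proof is correct and follows the same skeleton as the paper's: the Cand\`es--Fernandez-Granda remark makes $\Phi^*p_0$ a certificate for the discrete problem once $n\ge n_0$, giving the uniform bound $\norm{p_0^{\Gg_n}}_2\le\norm{p_0}_2$; then weak compactness, identification of the limit as the minimal-norm solution, the norm sandwich to upgrade to strong convergence, and Cauchy--Schwarz for the uniform convergence of $\eta_0^{\Gg_n}$ and its derivatives. The only place where you genuinely diverge is the step you yourself flag as the ``main obstacle'': passing the grid-wise constraint to $\normi{\Phi^*p^\star}\le 1$ at the weak limit. You do this via an Arzel\`a--Ascoli/equicontinuity argument (uniform Lipschitz bound $|\Phi^*p(t)-\Phi^*p(t')|\le\norm{\phi'}_\infty\norm{p}_2|t-t'|$ plus density of the grid points), whereas the paper simply observes that the dyadic sets are nested, $C_{n+1}\subset C_n$, each $C_n$ is convex and closed hence weakly closed, so the weak limit of the tail lies in every $C_n$ and therefore in $C=\bigcap_n C_n$. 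The paper's route is shorter; yours is marginally more robust in that it only needs the grids to become dense rather than nested, at the price of invoking the regularity of $\phi$. Both are valid, and your identification of the limit via strong duality ($\dotp{y}{p_0^{\Gg_n}}=\normTV{m_0}=\dotp{y}{p_0}$) is an equivalent rephrasing of the paper's optimality comparison against all $p\in C\subset C_n$.
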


\begin{proof}
First, following~\cite{Candes-toward}, we observe that, since $(\Phi^*p_0)(x_i)=\sign(a_i)$ and $\normi{\Phi^*p_0} \leq 1$ (a fortiori $|\Phi^*p_0\left(\frac{j}{2^n}\right)|\leq 1$ for $1\leq j\leq 2^n-1$),
$\Phi^*p_0$ is also a dual certificate for~$(\Pp_0^{\Gg_n})$ provided $n\geq n_0$. As a consequence $\norm{p_{0}^{\Gg_n}}_2\leq \norm{p_0}_2$.

The sequence $(p_{0}^{\Gg_n})_{n\in \NN}$ being bounded in $L^2(\TT)$, we may extract a subsequence (still denoted by $p_{0}^{\Gg_n}$) 
 which weakly converges to some $\tilde{p}\in L^2(\TT)$, and
\begin{align}
 	\norm{\tilde{p}}_2 \leq \liminf_{n\to +\infty} \norm{p_{0}^{\Gg_n}}_2 
	\leq \limsup_{n\to +\infty} \norm{p_{0}^{\Gg_n}}_2 \leq \norm{p_0}_2.
 	\label{eq-limsup-norm}
\end{align}
Moreover, by optimality of $p_{0}^{\Gg_n}$ for the discrete problem, for each $p\in C\subset C_n$,
 $\dotp{y_0}{p_{0}^{\Gg_n}} \geq \dotp{y_0}{p}$ so that in the limit $\dotp{y_0}{\tilde{p}} \geq \dotp{y_0}{p}$.
Observing that $\tilde{p}\in C=\bigcap_{n\in \NN}{C_n}$ (since each $C_n$ is
  weakly closed) we conclude that $\tilde{p}=p_0$. Since the limit does not depend on the extracted subsequence,
  we conclude that the whole sequence $(p_0^{\Gg_n})_{n\in \NN}$ converges to $p_0$, and equality in~\eqref{eq-limsup-norm}
  implies that the convergence is strong.

The consequence regarding $\eta_{0}^{\Gg_n}$ is straightforward.
\end{proof}

We may now describe the structure of the extended support for dyadic measures which satisfy the Non Degenerate Source Condition.

\begin{prop}[Extended support]
Let $m_0=\sum_{i=1}^N a_i \delta_{x_i}$ be a discrete dyadic measure which satisfies the Non Degenerate Source Condition.
Then, for $n$ large enough, there exists $\varepsilon^n \in \{+1,-1\}^N$
such that:
\begin{align}
\ssupp(m_0) \subset \exts_{\Gg_n}(m_0) \subset \ssupp(m_0) \cup \left(\ssupp(m_0) + \frac{\varepsilon^n}{2^n} \right), 
\end{align}
where $\ssupp(m_0) + \frac{\varepsilon^n}{2^n}= \enscond{ (x_i+\frac{\varepsilon^n_i}{2^n},\eta_{0}^{\Gg_n}(x_i)) }{ 1\leq i\leq N }$.
\label{prop-extsupp}
\end{prop}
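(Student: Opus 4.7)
The argument rests on the $C^2$-uniform convergence $\eta_0^{\Gg_n}\to \eta_0$ from Proposition~\ref{prop-cv-dyadic} combined with the non-degeneracy built into the NDSC. For the first inclusion, one observes that under the NDSC $m_0$ is a solution of~\eqref{eq-constrained-pbm}; since $\supp(m_0)\subset \Gg_n$ for $n\geq n_0$, $m_0$ is feasible for~\eqref{eq-lasso-discr-noiseless} with the same total variation, hence it is also a solution of the discrete noiseless problem. The discrete minimal-norm certificate $\eta_0^{\Gg_n}$ is then a certificate for $m_0$, so $\eta_0^{\Gg_n}(x_i)=\sign(a_i)$ for every $i$, which gives $\ssupp(m_0)\subset \exts_{\Gg_n}(m_0)$.

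For the second inclusion, I would first localize the possible saturation points. The NDSC provides pairwise disjoint neighborhoods $U_i=(x_i-\delta,x_i+\delta)$ on which $\eta_0''$ keeps constant sign opposite to $\sign(a_i)$, together with a strict bound $|\eta_0|\leq 1-r$ (for some $r>0$) on the compact set $K=\TT\setminus \bigcup_i U_i$. By the $C^2$-uniform convergence of $\eta_0^{\Gg_n}$ towards $\eta_0$, for $n$ large enough one has $|\eta_0^{\Gg_n}|<1$ on $K$ and $\eta_0^{\Gg_n}$ inherits the strict concavity (resp. convexity) of $\eta_0$ on each $U_i$ when $\sign(a_i)=+1$ (resp. $-1$). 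Consequently, every saturating grid point of $\eta_0^{\Gg_n}$ lies in some $U_i$ and has the same sign as $\eta_0(x_i)$.

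Finally, fix $i$ and suppose some $y\in \Gg_n\cap U_i$ with $y\neq x_i$ is a saturation point: then $\eta_0^{\Gg_n}(y)=\eta_0^{\Gg_n}(x_i)=\sign(a_i)$. Strict concavity (resp. convexity) of $\eta_0^{\Gg_n}$ on $U_i$ forces $|\eta_0^{\Gg_n}(z)|>1$ at any $z$ lying strictly between $x_i$ and $y$, which is incompatible with the discrete constraint $\|\eta_0^{\Gg_n}\|_{\infty,\Gg_n}\leq 1$ as soon as $z\in \Gg_n$. Hence no grid point separates $x_i$ from $y$, forcing $|y-x_i|=1/2^n$. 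A direct application of strict concavity to the three points $x_i-1/2^n,\,x_i,\,x_i+1/2^n$ further shows that both immediate neighbors of $x_i$ cannot saturate simultaneously, so one can define $\varepsilon_i^n\in\{-1,+1\}$ as the side of $y$ when such a $y$ exists (arbitrarily otherwise), and the second inclusion follows. The hard part is precisely this last step: reconciling the smooth strict concavity with the pointwise grid constraint to pin the possible extra saturating point exactly at grid distance $1/2^n$ depends on the uniform $C^2$ convergence of Proposition~\ref{prop-cv-dyadic}, which is itself the reason the dyadic hypothesis $\supp(m_0)\subset \Gg_n$ is used.
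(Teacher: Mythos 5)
Your proof is correct and follows essentially the same route as the paper's: the first inclusion via the observation that $m_0$ (with support in $\Gg_n$) solves the discrete noiseless problem so that the discrete certificate interpolates $\sign(a_i)$, and the second via the uniform $C^2$ convergence of Proposition~\ref{prop-cv-dyadic}, which transfers the strict bound on $K_\epsilon$ and the strict concavity/convexity near each $x_i$ to $\eta_0^{\Gg_n}$, after which the grid constraint $\|\eta_0^{\Gg_n}\|_{\infty,\Gg_n}\leq 1$ pins any extra saturating grid point at distance exactly $1/2^n$ from $x_i$. The only cosmetic difference is that you argue the distance bound by excluding intermediate grid points, whereas the paper notes directly that a strictly concave function attains the value $1$ at most twice and that $\eta_0^{\Gg_n}(x_i\pm 1/2^n)\leq 1$ forces $|t^\star-x_i|\leq 1/2^n$; the two arguments are interchangeable.
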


\begin{cor}
Under the hypotheses of Proposition~\ref{prop-extsupp}, for $n$ large enough, there exist two constants $\alpha(n)>0$
 and $\lambda_0(n)>0$ such that, for $\frac{\norm{w}_2}{\la}<\alpha(n)$ and  $0<\la <\la_{0}(n)$,
 any solution $\tilde{m}_\la^{\Gg_n}$ of $(\Pp_\la^{\Gg_n})$ has support in $\{ x_i,\ 1\leq i\leq N\} \cup \{x_i+\frac{\varepsilon^n_i}{2^n}, \ 1\leq i\leq N \}$, with signs $\eta_{0}^{\Gg_n}(x_i)$, $1\leq i\leq N$.
\label{cor-extended-support}
  \end{cor}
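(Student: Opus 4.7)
The plan is to observe that Corollary~\ref{cor-extended-support} is a direct combination of Theorem~\ref{thm-noise-robustness-discr} applied to the grid $\Gg_n$ with the structural description of the extended signed support given by Proposition~\ref{prop-extsupp}.

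First I would fix $n$ large enough so that Proposition~\ref{prop-extsupp} applies, which in particular requires $n\geq n_0$ where $n_0$ is the dyadic level of $m_0$. Since the support of $m_0$ then lies in $\Gg_n$, one has $y=\Phi m_0\in G$ (in the notation of Remark~\ref{rem-y-imphi}), so Theorem~\ref{thm-noise-robustness-discr} can indeed be invoked on the grid $\Gg_n$. This yields constants $\alpha(n)>0$ and $\lambda_0(n)>0$ such that, for every $(\la,w)\in D_{\alpha(n),\lambda_0(n)}$ and every solution $\tilde m_\la^{\Gg_n}$ of $\Pp_\la^{\Gg_n}(y+w)$,
\[
	\ssupp(\tilde m_\la^{\Gg_n}) \subset \exts_{\Gg_n}(m_0).
\]

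Then I would combine this inclusion with Proposition~\ref{prop-extsupp}, which gives
\[
	\exts_{\Gg_n}(m_0) \subset \ssupp(m_0) \cup \Bigl(\ssupp(m_0)+\tfrac{\varepsilon^n}{2^n}\Bigr).
\]
Unpacking the definition of the right-hand side: a point of the first set is $(x_i,\sign a_i)=(x_i,\eta_0^{\Gg_n}(x_i))$ since $m_0$ is a solution of $\Pp_0^{\Gg_n}(y)$ (by Proposition~\ref{prop-extsupp}), and a point of the second set is $(x_i+\varepsilon_i^n/2^n,\eta_0^{\Gg_n}(x_i))$ by the convention stated in Proposition~\ref{prop-extsupp}. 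The two inclusions therefore show that $\supp(\tilde m_\la^{\Gg_n})\subset \{x_i\}\cup\{x_i+\varepsilon_i^n/2^n\}$, with the prescribed signs $\eta_0^{\Gg_n}(x_i)$ at both locations.

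There is essentially no obstacle: the two key ingredients do the whole job. The only point to check carefully is that the signs attached to the neighbor $x_i+\varepsilon_i^n/2^n$ in Proposition~\ref{prop-extsupp} are given by $\eta_0^{\Gg_n}(x_i)$ (and not $\eta_0^{\Gg_n}(x_i+\varepsilon_i^n/2^n)$), which is exactly what the statement of the corollary claims, so no further manipulation is required. Note that the constants $\alpha(n)$ and $\lambda_0(n)$ depend genuinely on $n$, since they come from Theorem~\ref{thm-noise-robustness-discr} applied to the grid-dependent certificate $\eta_0^{\Gg_n}$; tracking their behavior as $n\to\infty$ would be a distinct question, not addressed by this corollary.
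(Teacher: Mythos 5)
Your proposal is correct and is exactly the intended argument: the paper gives no separate proof of this corollary, treating it as the immediate combination of Theorem~\ref{thm-noise-robustness-discr} (applied on $\Gg_n$, legitimate since the dyadic support of $m_0$ lies in $\Gg_n$ for $n\geq n_0$, so $y\in G$) with the inclusion of Proposition~\ref{prop-extsupp}. Your care about the sign convention at the neighboring grid point matching $\eta_0^{\Gg_n}(x_i)$ is also consistent with the convention stated in Proposition~\ref{prop-extsupp}.
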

    
  \begin{proof}[Proof of Proposition~\ref{prop-extsupp}]
We describe the points where the value of $\eta_0^{\Gg_n}$ may be $\pm 1$.
By the Non-Degenerate Source Condition, there exists $\epsilon>0$ small enough such that
 the intervals $(x_{0,i}-\epsilon, x_{0,i}+\epsilon)$, $1\leq i\leq N$, do not intersect,
 and that for all $t\in \bigcup_{i=1}^N (x_{i}-\epsilon, x_{i}+\epsilon)$, $|\eta_0(t)|\geq C>0$ and $|\eta_0''(t)|\geq C>0$.
Moreover, $\sup_{K_\epsilon} |\eta_0| <1$ with $K_\epsilon = \TT\setminus \bigcup_{i=1}^N (x_{i}-\epsilon, x_{i}+\epsilon)$.
 
 Therefore, by Proposition~\ref{prop-cv-dyadic}, for $n$ large enough: 
 \begin{itemize}
 \item $|\eta_{0}^{\Gg_n}(t)|\geq \frac{C}{2}>0$ for $t\in (x_{i,0}-\varepsilon,x_{i,0}+\varepsilon)$,
 \item $|(\eta_{0}^{\Gg_n})''(t)|\geq \frac{C}{2}>0$ for $t\in (x_{i}-\varepsilon,x_{i}+\varepsilon)$,
 \item $\sup_{K_\epsilon} |\eta_{0}^{\Gg_n}| <1$,
 \end{itemize} 
and in each interval $(x_{i,0}-\varepsilon,x_{i}+\varepsilon)$, $\eta_{0}^{\Gg_n}$ has the same sign as $\eta_{0}$
 and it is strictly concave (resp. strictly convex) if $\eta_0(x_i)=1$ (resp. $-1$).
  
Assume for instance that $\eta_0(x_i)=1$. The extremality conditions between $p_0$ and $m_0$ for $(\Pp_0(y))$ also imply that 
$m_0$ is a solution of~$(\Pp_0^{\Gg_n}(y_0))$. Then, the extremality conditions
 between $p_{0}^{\Gg_n}$ and $m_0$ imply that $\eta_{0}^{\Gg_n}(x_{i})=1$ as well.
By the strict concavity of $\eta_{0}^{\Gg_n}$ there is at most one other point $t^\star\in (x_{i}-\varepsilon,x_{i}+\varepsilon)$
 such that $\eta_{0}^{\Gg_n}(t^\star)=1$, and since $\eta_{0}^{\Gg_n}(x_{i}\pm \frac{1}{2^n})\leq 1$, $|t^\star-x_i|\leq \frac{1}{2^n}$.
Such a point $t^\star$ contributes to the extended support of $m$ if and only if it belongs to the grid (i.e. $t^\star=x_i\pm \frac{1}{2^n}$).

The argument for $\eta_0(x_i)=-1$ is similar. This concludes the proof.
  \end{proof}

Corollary~\ref{cor-extended-support} highlights the difference between the continuous and 
the discretized problems. In the first case,
any small noise would induce a slight perturbation of the spikes locations and amplitudes,
 but their number would stay the same. In the second case, the spikes cannot ``move'', so that
 new spikes may appear, but only at one of the immediate neighbors of the original ones.

For non-dyadic measures, we may show using Proposition~\ref{prop-cv-fixedla} that for small,
 fixed $\la >0$, and $n$ large enough, there is at most one pair of spikes (located at
  consecutive points of the grid) in the neighborhood
  of each original spike. From our numerical experiments described below (in the case of the ideal low-pass filter),
 we conjecture that, in the case where there are indeed two spikes, they surround the location of the original spike.

\subsection{Application to the Ideal Low-pass Filter}

To conclude this section, we compare the different (pre-)certificates involved in the above discussion,
 whether on the discrete grid or in the continuous domain. 
Then we illustrate the convergence of the sets $(C_n)_{n\in \NN}$ towards $C$.

\paragraph{Certificates.}
Figure~\ref{fig-certif-grid} illustrates the results of Section~\ref{sec-extended}.
The numerical values are $f_c=6$, $n=7$, and the distance between the two opposite spikes is $\frac{0.6}{f_c}$.
The continuous minimal norm certificate $\eta_0$ is shown: it satisfies $|\eta_0(t)|\leq 1$
 for all $t\in \TT$ and $\eta_0(x_i)=\sign m_0(\{x_i\})$ for $1\leq i\leq N$. The discrete
  minimal norm certificate $\eta_0^{\Gg_n}$ satisfies $|\eta_0^{\Gg_n}(t)|\leq 1$
 for all $t$ in the grid, and $\eta(u)=\sign m_0(\{x_i\})$ for all $u\in\ext_{\Gg_n}$
in the neighborhood of $x_i$. For a dyadic measure, such points are $x_i$ and possibly one of its immediate neighbors.
 For non dyadic measures, we conjecture that such points are the two immediate neighbors of $x_i$.

The Fuchs precertificate $\etaF$ is also shown. Some points $t$ of the grid do not satisfy $|\etaF(t)|\leq 1$, hence the Fuchs pre-certificate is not a certificate and the support is not stable. This was already clear from the fact that $\supp(m_0) \subsetneq \ext_{\Gg_n}(m_0)$.

\begin{figure}[htbp]
\centering
\setcounter{subfigure}{0}
\subfloat[Dyadic measure]{\includegraphics[width=0.47\linewidth] {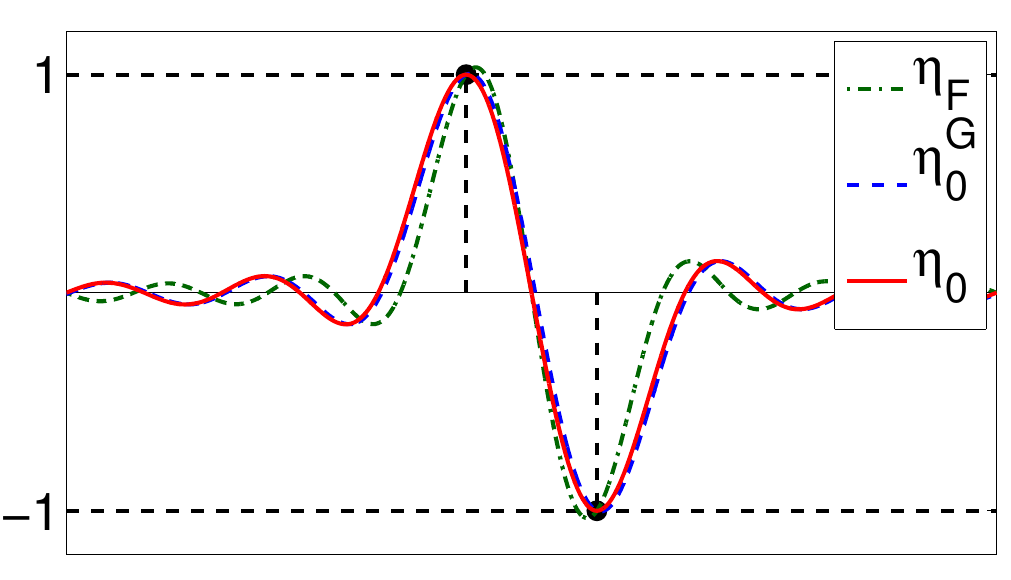}}
\subfloat[Non dyadic measure]{\includegraphics[width=0.47\linewidth] {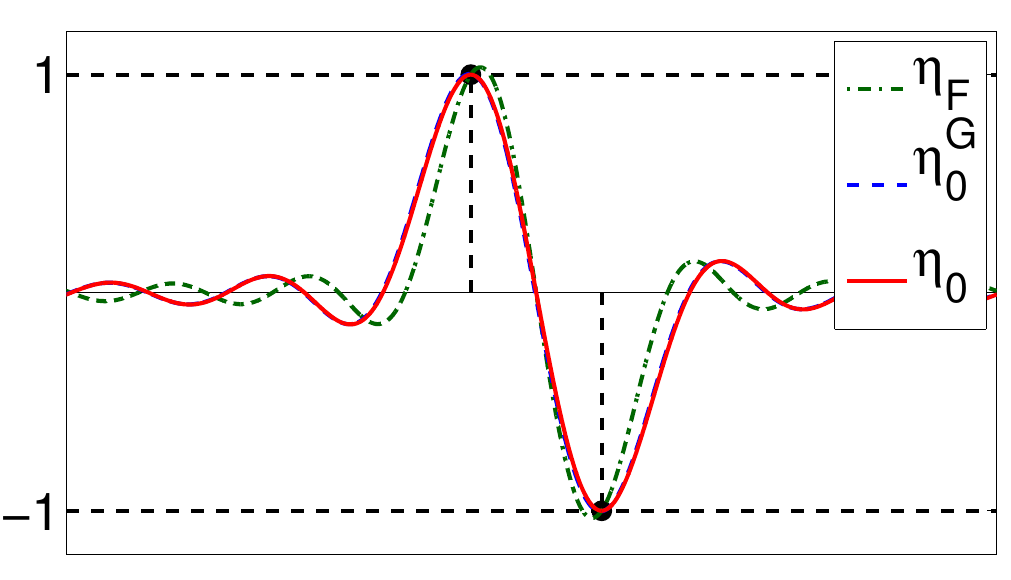}}\\
\subfloat[Zoom]{\includegraphics[width=0.45\linewidth] {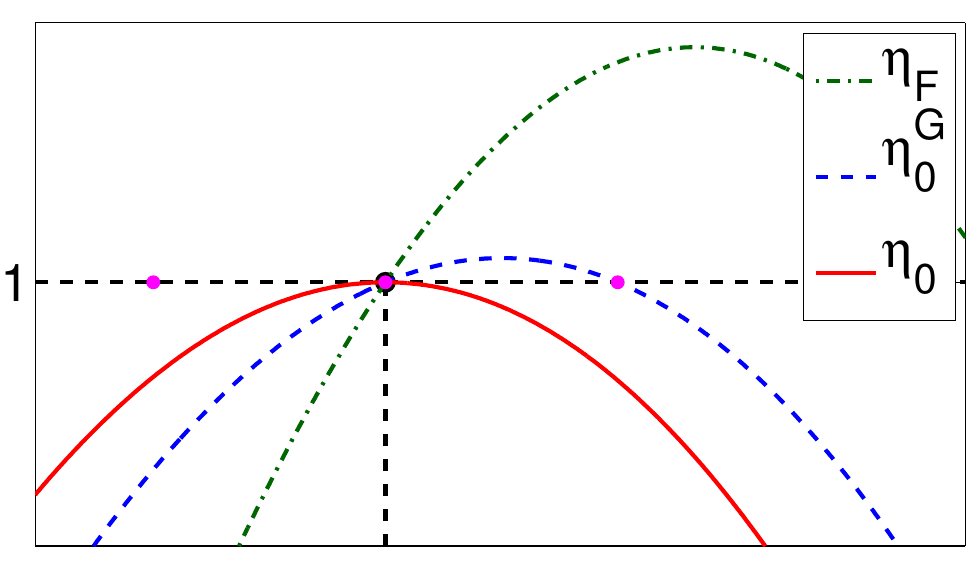}}
\hspace{2mm}\subfloat[Zoom]{\includegraphics[width=0.45\linewidth] {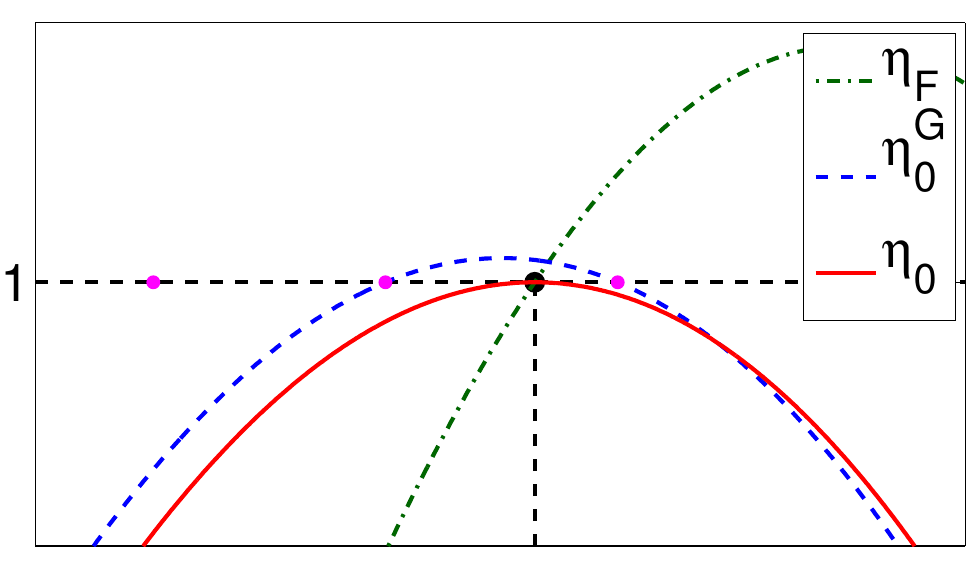}}\\
\caption{\label{fig-certif-grid} Comparison of certificates for a dyadic (left) and a non-dyadic measure (right).
The second row is a zoom of the first one near the left spike.
The (continuous) minimum norm certificate $\eta_0$ (in continuous red line) is everywhere bounded by $1$.
The (discrete) minimum norm certificate $\eta_0^{\Gg_n}$ (in dashed blue line) is bounded by $1$ at the grid points.
The Fuchs pre-certificate $\etaF$ (dash-dot green line) is above $1$ at some points of the grid: the Fuchs criterion is not satisfied.
}
\end{figure}

Figure~\ref{fig-coeff-grid} focuses on the reconstructed amplitudes $\tilde{a}_i$ using $\Pp_\la(y_0)$ as $\la \to 0$. Each curve represents a path $\la \mapsto \tilde{a}_i$. Note that for the problem on a finite grid, such paths are piecewise affine. In the dyadic case (left part of the figure), the amplitude at $x_i$ (continuous line) and at the next point of the grid (dashed line) are shown.
As $\la \to 0$, the spike at the neighbor vanishes and the result tends to the original
 identifiable measure.
In the non dyadic case (right part of the figure), the amplitude at the two immediate neighbors of $x_i$ are shown (continuous and dashed lines).
Here $\supp m_0 \not\subset \Gg$ so that $m_0$ is not identifiable for the discrete problem.
For each spike, the amplitudes of the two neighbors converge to some non zero value. The limit measure as $\la \rightarrow 0$ is the solution of $\Pp_0(y_{0G})$.

\begin{figure}[htbp]
\centering
\subfloat[Dyadic measure]{\includegraphics[width=0.48\linewidth] {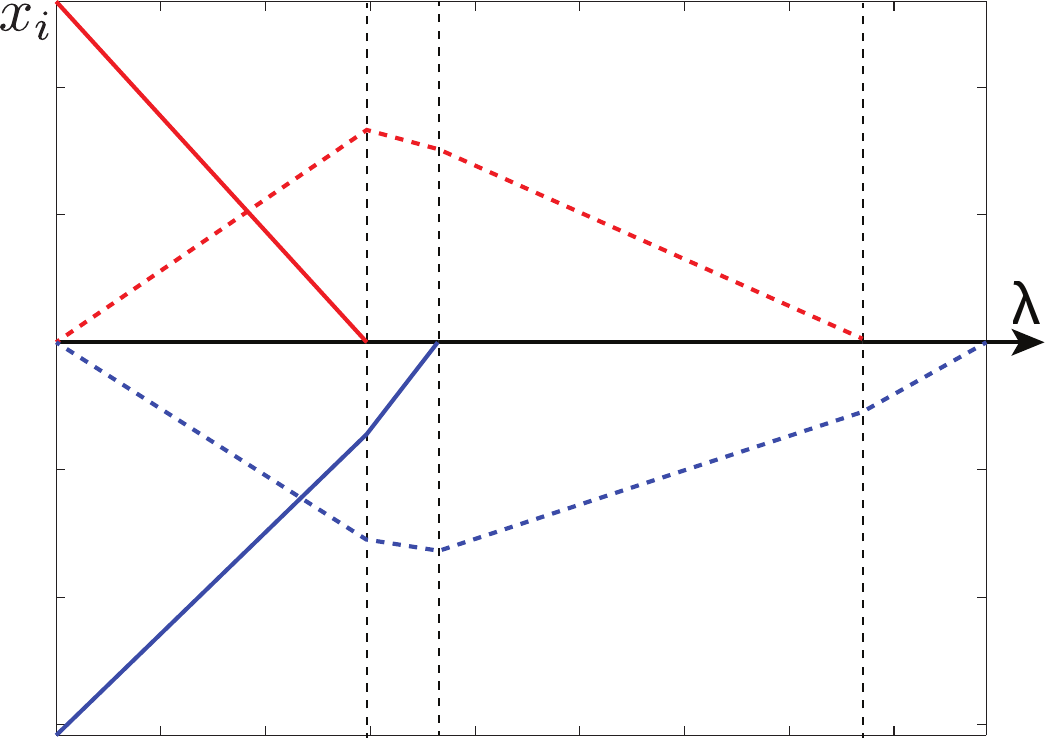}}
\subfloat[Non dyadic measure]{\includegraphics[width=0.48\linewidth] {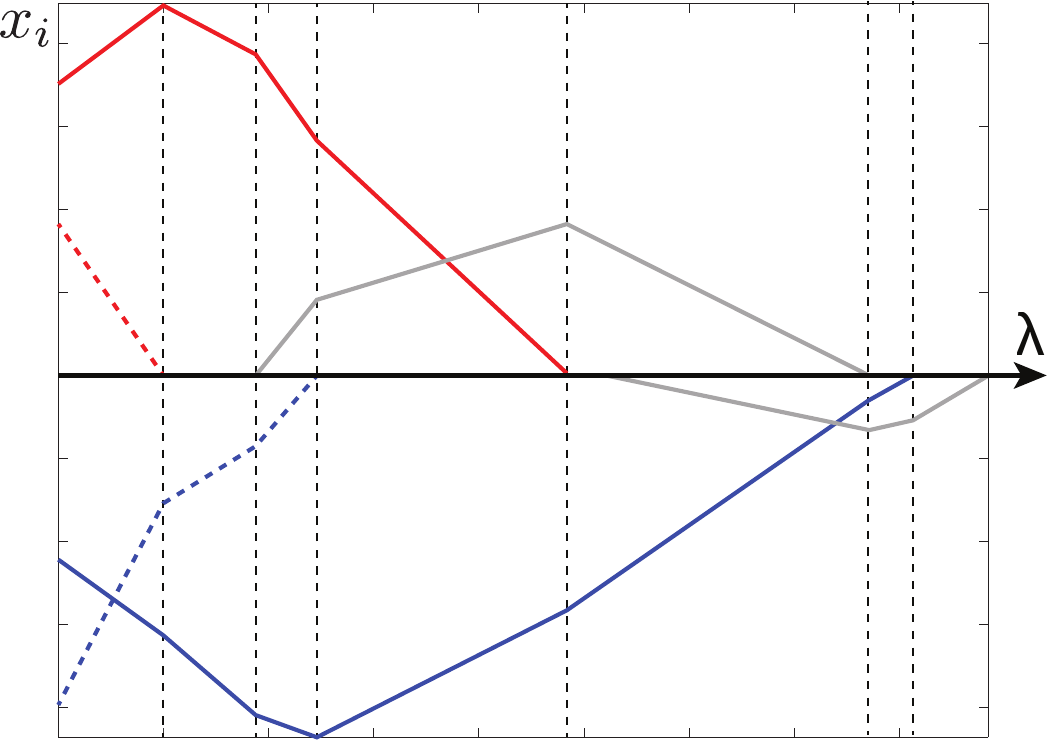}}
\caption{\label{fig-coeff-grid} 
Display of the solution path (as a function of $\la$) for the measure displayed on Figure~\ref{fig-certif-grid}. 
Left: Amplitudes of the coefficients at $x_i$ (continuous line) and at the next point of the grid (dashed line) as $\la$ varies.
Right: idem for the two immediate neighbors of $x_i$. 
Some other spikes (grey continuous line) appear and vanish before the last segments, as $\la \to 0$.}
\end{figure}

\paragraph{Set convergence.} Now, we interpret the convergence of the discrete problems
 through the convergence of the corresponding constraint set for the dual problem.
Writing $\Phi^* p(x)=\int p(t)\phi(x-t) \d t= \dotp{p}{\phi_x}_{L^2}$ with $\phi_x : t\mapsto \phi(x-t)$,
 we observe that:
\begin{align}
 	C_n &= \enscond{ p\in \Im \Phi }{
 		\left|\Phi^*p \left(\frac{j}{2^n}\right)\right|\leq 1, \ 0\leq j \leq 2^n-1
	}\\
 		&= \enscond{p\in \Im \Phi }{ 
		|\dotp{p}{\phi_{\frac{j}{2^n}}}_{L^2}| \leq 1, \ 0 \leq j \leq 2^n-1
	}.
 \end{align}
 As a consequence $C_n$ is the polar set of the convex hull of 
$\enscond{ \pm \phi_{\frac{j}{2^n}} }{ 0\leq j \leq 2^n-1 }$.

In the case of the Dirichlet kernel, the vector space $\Im \Phi$ is the space of trigonometric polynomials with degree less than or equal to $f_c$.
An orthonormal basis of $\Im \Phi$ is given by: $(c_0,c_1,\ldots c_{f_c}, s_1, \ldots s_{f_c})$ where 
$c_0 \equiv 1$, $c_k: t\mapsto \sqrt{2}\cos (2\pi kt)$ and $s_k: t\mapsto \sqrt{2}\sin (2\pi kt)$ for $1\leq k\leq f_c$.

Moreover,
\begin{align*}
\phi(x-t)&= \frac{1}{2f_c+1}\left(1+\sum_{k=1}^{f_c} 2\cos (2\pi k (x-t))\right)\\
&= \frac{1}{2f_c+1}\left(1+2\sum_{k=1}^{f_c} \left(\cos (2\pi kx) \cos(2\pi kt) +\sin (2\pi kx)\sin (2\pi kt) \right)\right)
\end{align*}
so that we may write:
\begin{align*}
\phi_x=\frac{1}{2f_c+1}\left(c_0+\sqrt{2}\sum_{k=1}^{f_c}\left(\cos (2\pi kx) c_k + \sin (2\pi kx)s_k \right)\right).
\end{align*}

\begin{figure}[htbp]
\centering
\subfloat{\includegraphics[width=0.30\linewidth,clip=true,trim=90px 40px 90px 40px]{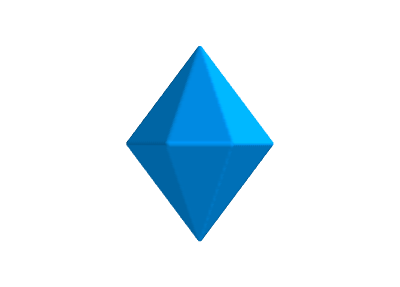}}\hfill
\subfloat{\includegraphics[width=0.30\linewidth,clip=true,trim=90px 40px 90px 40px]{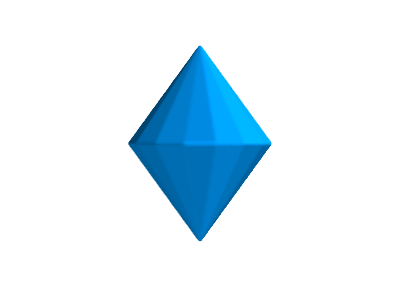}}\hfill
\subfloat{\includegraphics[width=0.30\linewidth,clip=true,trim=90px 40px 90px 40px]{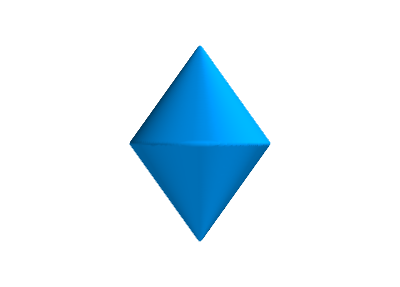}}\\    \setcounter{subfigure}{0}
\subfloat[$C_3$]{\includegraphics[width=0.30\linewidth,clip=true,trim=90px 40px 90px 40px]{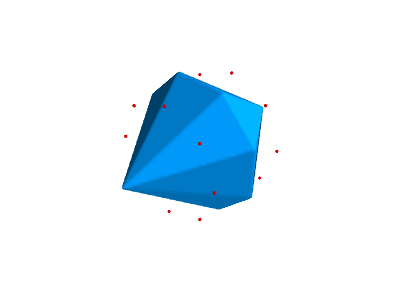}}\hfill
\subfloat[$C_4$]{\includegraphics[width=0.30\linewidth,clip=true,trim=90px 40px 90px 40px]{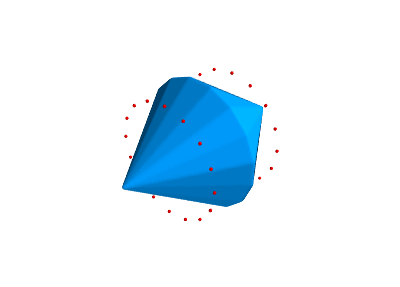}}\hfill
\subfloat[$C_7$]{\includegraphics[width=0.30\linewidth,clip=true,trim=90px 40px 90px 40px]{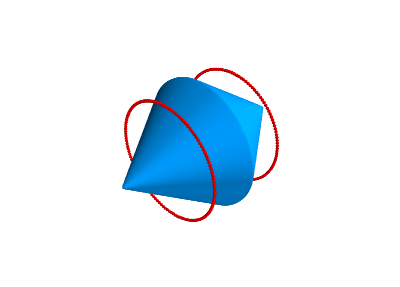}}\\
\caption{\label{fig-convexe} Top: The convex set $C_n$ for $f_c=1$, and $n=3$, $4$ or $7$ (from left to right).
Bottom: same convex sets, the red spheres indicate the (rescaled) vectors $\phi_{\frac{j}{2^n}}$. }
\end{figure}

For $f_c=1$, we obtain $\phi_x=\frac{1}{3}\left(c_0+\sqrt{2}\left(\cos (2\pi x) c_1 + \sin (2\pi x)s_1 \right)\right)$, and the vectors $\phi_x$ lie on a circle. The convex hull of $\enscond{ \pm \phi_{\frac{j}{2^n}} }{ 0\leq j \leq 2^n-1 }$
 is thus a cylinder, and its polar set $C_n$ is displayed in Figure~\ref{fig-convexe} for $n=3$, $4$, and $7$.

Problem~$(\Dd_\la^{\Gg_n}(y_0+w))$ corresponds to the projection of $\frac{y_0+w}{\la}$ onto the polytope $C_n$. Each face of $C_n$ corresponds to a possible signed support of the solutions $\tilde{m}_{\la,w}$. The large, flat faces of $C_n$ yield stability to the support of $\tilde{m}_{\la,w}$ for
 small noise $w$, as described by Theorem~\ref{thm-noise-robustness-discr}. As $n\to +\infty$ 
these faces converge into a piecewise smooth manifold and the support of $\tilde{m}_{\la,w}$
 is allowed to vary smoothly in $\TT$, according to Theorem~\ref{thm-noise-robustness}.

% !TEX root = ../DuvalPeyre-SparseSpikes.tex

%%%%%%%%%%%%%%%%%%%%%%%%%%%%%%%%%%%%%%%%%%%%%%%%%%%
\section*{Conclusion}

% Whereas~\cite{Candes-superresol-noisy} have provided an $L^2$ robustness to noise result for  the super-resolution problem and~\cite{Fernandez-Granda-support,Azais-inaccurate} have provided general bounds on the errors on the support and the amplitudes of the masses,

In this paper, we have given a precise statement about the support recovery property of sparse spikes deconvolution with total variation regularization.
This support recovery is governed by the non-degeneracy of a minimal norm certificate. This hypothesis can be checked by computing a vanishing derivative pre-certificate, which can be computed in closed form.
We have shown that under this non-degeneracy hypothesis, one recovers the same number of spikes and that these spikes converge to the original ones when $\la$ and $\norm{w}/\la$ are small enough. 
While previous stability results~\cite{Candes-superresol-noisy,Fernandez-Granda-support,Azais-inaccurate} hold for an arbitrary noise level and make use of any non-degenerate certificate, they are formulated in terms of local averages of the recovered measure and do not describe precisely the support. In contrast, our result which requires a specific certificate to be non-degenerate and a regime where $\la$ and $\norm{w}/\la$ are small enough provides exact support stability. These settings and results are thus not comparable, and provide complementary informations about the performance of total variation regularization. 
 
Developing a similar framework for the discrete $\ell^1$ setting, we have also improved upon existing results about stability of the support by introducing the notion of extended support of a measure. Our study highlights the difference between the continuous and the discrete case: when the size of the grid is small enough, the stable recovery of the support is generally not possible in the discrete framework. Yet,  in the non degenerate case, the reconstructed support at small noise is a slight modification of the original one: each original spike yields at most one pair of consecutive spikes which surround it.
  
Finally, let us note that the proposed method extends to non-stationary filtering operators and to arbitrary dimensions.

%%%%%%%%%%%%%%%%%%%%%%%%%%%%%%%%%%%%%%%%%%%%%%%%%%%
\section*{Acknowledgements} 

The authors would like to thank Jalal Fadili, Charles Dossal and Samuel Vaiter for fruitful discussions. This work has been supported by the European Research Council (ERC project SIGMA-Vision).

% !TEX root = ../DuvalPeyre-SparseSpikes.tex

\appendix
\section{Auxiliary results}
\label{sec-auxiliary}

For the convenience of the reader, we give here the proofs of several auxiliary results
 which are needed in the discussion.
 
%%%%%%%%%%%%%%%%%%%%%%%%%%%%%%%%
% \subsection{Strong duality for the constrained problem}

\begin{prop}[Subdifferential of the total variation]
Let us endow $\Mm(\TT)$ with the weak-* topology and $C(\TT)$ with the weak topology.
Then, for any $m\in \Mm(\TT)$, we have:
\begin{align*}
 	\partial \normTV{m} = \enscond{\eta\in C(\TT)}{ \normi{\eta} \leq 1 \qandq \int \eta \, \d m =\normTV{m}  }.
\end{align*}
\label{prop-subdifferential}
\end{prop}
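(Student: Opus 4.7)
The total variation is a norm on $\Mm(\TT)$, hence a sublinear (even positively homogeneous) functional. For such functionals the standard trick is to exploit the homogeneity by testing against $\tilde m = 0$ and $\tilde m = 2m$, then upgrading to a full dual inequality. I would prove the two inclusions separately.

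\medskip

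For the $\supseteq$ inclusion, let $\eta\in C(\TT)$ with $\normi{\eta}\leq 1$ and $\int \eta \, \d m = \normTV{m}$. By the very definition of the total variation~\eqref{eq-def-tv}, $\int \eta \, \d \tilde m \leq \normTV{\tilde m}$ for every $\tilde m \in \Mm(\TT)$, since $\eta$ is an admissible test function. Subtracting the equality $\int \eta \, \d m = \normTV{m}$ gives
\[
\normTV{\tilde m} \geq \normTV{m} + \int \eta \, \d(\tilde m - m),
\]
so $\eta \in \partial \normTV{m}$.

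\medskip

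For the reverse inclusion $\subseteq$, let $\eta\in \partial \normTV{m}$. Plugging $\tilde m = 0$ into the subgradient inequality yields $\int \eta \, \d m \geq \normTV{m}$, and plugging $\tilde m = 2m$ (using the homogeneity $\normTV{2m} = 2\normTV{m}$) yields $\normTV{m} \geq \int \eta \, \d m$. Hence $\int \eta \, \d m = \normTV{m}$. Re-injecting this equality into the subgradient inequality gives the global bound $\int \eta \, \d \tilde m \leq \normTV{\tilde m}$ for every $\tilde m\in \Mm(\TT)$. Specializing to $\tilde m = \pm \delta_x$ for arbitrary $x\in \TT$ (so that $\normTV{\tilde m}=1$) gives $\pm \eta(x)\leq 1$, i.e.\ $\normi{\eta}\leq 1$. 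This concludes the proof.

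\medskip

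No step looks truly delicate: the only point worth a brief comment is that since $\TT$ is compact every Radon measure has finite total variation, and Dirac masses are legitimate test measures, so the homogeneity argument and the pointwise test with $\pm \delta_x$ both apply without topological subtleties. The identification $\partial\normTV{m}$ as a subset of $C(\TT)$ (rather than of some larger dual) is built in by the choice of the weak-$*$/weak pairing, as recalled before the statement.
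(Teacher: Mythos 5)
Your proof is correct. The paper proves the same identity by computing the Fenchel conjugate of $J(m)=\normTV{m}$: it first identifies the set $A=\enscond{\eta\in C(\TT)}{\forall \tilde m,\ \dotp{\eta}{\tilde m}\leq \normTV{\tilde m}}$ with the closed unit ball of $L^\infty$ (using Dirac masses for one inclusion, exactly as you do), then uses positive homogeneity to show $J^*=\iota_A$, and finally invokes the equivalence $\eta\in\partial J(m)\Leftrightarrow \dotp{\eta}{m}=J(m)+J^*(\eta)$ from Ekeland--Temam. Your argument is the ``unpacked'' version of the same content: testing the subgradient inequality at $\tilde m=0$ and $\tilde m=2m$ is precisely the homogeneity step that is hidden inside the computation of $J^*$, and your Dirac test $\tilde m=\pm\delta_x$ is the same device the paper uses to identify $A$. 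What your route buys is self-containedness --- no conjugate, no external reference, only the definition of the subdifferential and of the total variation --- at the cost of not exhibiting the structural fact $J^*=\iota_{B_\infty(0,1)}$, which is the cleaner statement to reuse if one later needs the conjugate itself. Your closing remark about finiteness of $\normTV{m}$ on the compact $\TT$ and the legitimacy of Dirac masses as test measures correctly disposes of the only potential subtleties.
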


\begin{proof}
Let $A=\enscond{ \eta \in C(\TT) }{ \forall m\in \Mm(\TT), \ \dotp{\eta}{m} \leq \normTV{m} }$.
It is clear that $B_\infty(0,1)\subset A$, where $B_\infty(0,1)$ is the $L^\infty(\TT)$ closed unit ball.
Conversely, we observe that $A\subset B_\infty(0,1)$ by considering the Dirac masses $(\pm \delta_t)_{t\in \TT}$.

Let us write $J(m):= \normTV{m}$. The function $J:\Mm(\TT)\rightarrow \RR\cup \{+\infty\}$ is convex, proper, lower semi-continuous (for the weak-* topology), positively homogeneous and:
\begin{align*}
J^*(\eta)&=\sup_{m\in \Mm(\TT)} \sup_{t>0} \left( \dotp{\eta}{t m} -J(t m) \right)\\
&=\sup_{t>0} t\left( \sup_{m\in \Mm(\TT)} \dotp{\eta}{m} -J(m) \right)\\
&=\choice{
		 0 \quad \mbox{if } \eta\in A,  \\
		 +\infty \quad \mbox{otherwise.} \\
	}
\end{align*}

By Proposition~I.5.1 in \cite{ekeland1976convex}, for any $\eta\in C(\TT)$:
\begin{align*}
\eta\in \partial J(m) \Longleftrightarrow  \dotp{\eta}{m} = J(m) +J^*(\eta),
\end{align*}
which is equivalent to $\normi{\eta}\leq 1$ and $\int \eta \d m = \normTV{m}$.
\end{proof}

\begin{prop}
There exists a solution to~\eqref{eq-constrained-pbm} and the strong duality holds between
 \eqref{eq-constrained-pbm} and \eqref{eq-constrained-dual}, i.e. 
\begin{align}
	\umin{\Phi(m)=y_0} \normTV{m} =	\usup{\normi{\Phi^* p}\leq 1} \dotp{y_0}{p}.
\end{align}
Moreover, if a solution $p^\star$ to \eqref{eq-constrained-dual} exists, 
\begin{align}
	\Phi^* p^\star \in \partial{\normTV{m^\star}}
\label{eq-extremal-dual}
\end{align}
where $m^\star$ is any solution to \eqref{eq-constrained-pbm}.
Conversely, if \eqref{eq-extremal-dual} holds, then $m^\star$ and $p^\star$ are solutions
of respectively \eqref{eq-constrained-pbm} and \eqref{eq-constrained-dual}.
\label{prop-strong-dual}
\end{prop}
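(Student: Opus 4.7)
The plan is to establish existence of a primal minimizer by the direct method, then to leverage the strong duality already known for $\Pp_\la(y)$ to deduce strong duality at $\la = 0$, and finally to derive the extremality conditions from the equality of optimal values.

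For existence, a minimizing sequence $(m_n)$ for~\eqref{eq-constrained-pbm} is bounded in total variation, so Banach--Alaoglu produces a weak-$*$ cluster point $\bar m \in \Mm(\TT)$. The weak-$*$ lower semicontinuity of $\normTV{\cdot}$ controls the objective, and the weak-$*$-to-weak continuity of $\Phi$ recalled in Section~\ref{sec-preliminaries} gives $\Phi \bar m = y$, so $\bar m$ realizes the infimum.

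Weak duality is immediate: any admissible pair $(m,p)$ satisfies $\dotp{y}{p} = \dotp{\Phi m}{p} = \dotp{m}{\Phi^* p} \leq \normTV{m}$ because $\normi{\Phi^* p} \leq 1$. For the reverse inequality I would use the solutions $m_\la$ of~\eqref{eq-initial-pb} and $p_\la$ of~\eqref{eq-initial-dual}, whose strong duality is recalled in Section~\ref{sec-dualcertif}. The extremality conditions~\eqref{eq-extremal-cdt} yield $\dotp{m_\la}{\Phi^* p_\la} = \normTV{m_\la}$, and substituting $\Phi m_\la = y - \la p_\la$ produces the key identity
\eq{
	\normTV{m_\la} + \la \norm{p_\la}_2^2 = \dotp{y}{p_\la}.
}
Testing the primal minimum $m^\star$ in~\eqref{eq-initial-pb} gives $\frac{1}{2}\norm{\Phi m_\la - y}_2^2 + \la \normTV{m_\la} \leq \la \normTV{m^\star}$, hence $\Phi m_\la \to y$ strongly and $\limsup_{\la \to 0^+} \normTV{m_\la} \leq \normTV{m^\star}$. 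A second application of the direct method, now to the family $(m_\la)_{\la > 0}$, shows that every weak-$*$ cluster point is admissible for~\eqref{eq-constrained-pbm} and therefore has total variation at least $\normTV{m^\star}$; combined with weak-$*$ lsc of the total variation, this upgrades to $\normTV{m_\la} \to \normTV{m^\star}$. Since each $p_\la$ is admissible for~\eqref{eq-constrained-dual}, weak duality gives $\dotp{y}{p_\la} \leq \normTV{m^\star}$, so the identity forces $\la \norm{p_\la}_2^2 \to 0$ and consequently $\dotp{y}{p_\la} \to \normTV{m^\star}$, which is the required reverse inequality.

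For the extremality conditions, suppose $p^\star$ solves~\eqref{eq-constrained-dual} and $m^\star$ solves~\eqref{eq-constrained-pbm}. Strong duality gives $\normTV{m^\star} = \dotp{y}{p^\star} = \dotp{m^\star}{\Phi^* p^\star}$, so together with $\normi{\Phi^* p^\star} \leq 1$, Proposition~\ref{prop-subdifferential} yields $\Phi^* p^\star \in \partial \normTV{m^\star}$. Conversely, if $\Phi m^\star = y$ and $\Phi^* p^\star \in \partial \normTV{m^\star}$, the same chain of equalities gives $\dotp{y}{p^\star} = \normTV{m^\star}$, and comparing with any admissible competitor through the weak-duality inequality yields optimality of both $m^\star$ and $p^\star$. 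I expect the main obstacle to be the limit transition $\la \to 0^+$, which boils down to controlling $\la \norm{p_\la}_2^2$; the identity above delivers this control as soon as $\normTV{m_\la} \to \normTV{m^\star}$ has been established by the direct method.
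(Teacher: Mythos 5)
Your proof is correct, but it follows a genuinely different route from the paper's. The paper proves this proposition in one stroke by applying the Fenchel--Rockafellar duality theorem (\cite[Theorem~II.4.1]{ekeland1976convex}) \emph{to the dual problem}~\eqref{eq-constrained-dual} rewritten as $\inf_{\normi{\Phi^*p}\leq 1}\dotp{-y}{p}$, with $V=L^2(\TT)$, $Y=C(\TT)$, $\Lambda=\Phi^*$ and $G=\iota_{\normi{\cdot}\leq 1}$; the qualification condition ($G$ finite and continuous at $0=\Lambda 0$) is immediate, the infimum is finite because $\dotp{-y}{p}=\dotp{m_0}{\Phi^*p}\geq -\normTV{m_0}$, and the theorem simultaneously delivers the absence of a duality gap and attainment in the ``bidual'', which is identified with~\eqref{eq-constrained-pbm}. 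You instead combine the direct method (for primal existence) with a vanishing-regularization limit: the identity $\normTV{m_\la}+\la\norm{p_\la}_2^2=\dotp{y}{p_\la}$ extracted from~\eqref{eq-extremal-cdt}, the bound $\normTV{m_\la}\leq\normTV{m^\star}$ and $\Phi m_\la\to y$, the subsequence argument giving $\normTV{m_\la}\to\normTV{m^\star}$, and weak duality to squeeze out $\la\norm{p_\la}_2^2\to 0$ and $\dotp{y}{p_\la}\to\normTV{m^\star}$. Each step checks out (including the converse direction, where you rightly make the admissibility $\Phi m^\star=y$ explicit, which the paper's statement leaves implicit). What the paper's route buys is brevity and self-containedness of the appendix; what yours buys is that it avoids the abstract perturbation-duality machinery in a non-reflexive space, reuses only facts already recalled in Section~\ref{sec-dualcertif} (at the cost of depending on existence and strong duality for $\la>0$ from~\cite{Bredies-space-measures}), and yields as a by-product the convergence of the optimal values and the estimate $\la\norm{p_\la}_2^2\to 0$, which dovetails with Proposition~\ref{prop-gamma-convergence}.
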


\begin{proof}
We apply \cite[Theorem~II.4.1]{ekeland1976convex} to \eqref{eq-constrained-dual} (and not to \eqref{eq-constrained-pbm} as would be natural)
rewritten as
\begin{align*}
\inf_{\normi{\Phi^* p} \leq 1} \dotp{-y_0}{p},
\end{align*}
The infimum is finite since for any admissible $p$, $\dotp{-y_0}{p}=\dotp{m_0}{\Phi p} \geq - \normTV{m_0}$.
Let $V=L^2(\TT)$, $Y=C(\TT)$ (endowed with the strong topology), $Y^*=\Mm(\TT)$, $F(u)=\dotp{-y_0}{u}$ for $u\in V$,
 $G(\psi)=\iota_{\normi{\cdot}\leq 1}(\psi)$ for $\psi \in Y$ and $\Lambda=\Phi^*$. It is clear that $F$ and $G$ are proper convex lower semi-continuous functions. Eventually, $F$ is finite at $0$, G is finite and continuous at $0=\Lambda 0$.
Hence the result.
\end{proof}

%%%%%%%%%%%%%%%%%%%%%%%%%%%%%%%%%%%%%%%%%%%%%%%%%%%%%%%%%%%%%%%%%%%%%%%%%%%%%%%%
\section{Proof of Proposition~\ref{prop-gamma-injective}}
\label{sec-proof1}

Assume that for some $(u,v)\in \RR^N\times \RR^N$, $\Ga_x (u,v)=0$. Then
\begin{align*}
	\foralls  t\in \TT, \quad 0&=\sum_{j=1}^N  \left(u_j\varphi(t-x_j)+v_j \varphi'(t-x_j)\right)\\
&=\sum_{k=-f_c}^{f_c} \left(\sum_{j=1}^N (u_j + 2ik\pi v_j)e^{-2ik\pi x_j}  \right) e^{2ik\pi t}
\end{align*}
We deduce that 
\eq{
	\foralls k\in \{-f_c,\ldots f_c \}, \quad
	\sum_{j=1}^N (u_j + k \tilde{v}_j)r_j^k =0
	\qwhereq
	\choice{
		r_j=e^{-2i\pi x_j},\\
		\tilde{v}_j=2i\pi v_j.
	}
}
It is therefore sufficient to prove that the columns of the following matrix are linearly independent
\begin{align*}
\begin{pmatrix}
r_1^{-f_c} & \ldots &r_N^{-f_c} & (-f_c)r_1^{-f_c}& \ldots &(-f_c)r_N^{-f_c} \\
\vdots & &\vdots  & \vdots & &\vdots \\
r_1^{k} & \ldots &r_N^{k} & kr_1^{k}& \ldots &k r_N^{k} \\
\vdots & &\vdots  & \vdots & &\vdots \\
r_1^{f_c} & \ldots &r_N^{f_c} & (f_c)r_1^{f_c}& \ldots &(f_c)r_N^{f_c} \\
\end{pmatrix}.
\end{align*}
% Since we have not found this result in standard textbooks, we detail the argument.
If $N<f_c$, we complete the family $\{r_1, \ldots r_N\}$ in a family $\{r_0,r_1,\ldots r_{f_c}\}\subset \SS^1$ such
 that the $r_i$'s are pairwise distinct. We obtain a square matrix $M$ by inserting the corresponding columns
\begin{align*}
M = \begin{pmatrix}
r_1^{-f_c} & \ldots &r_{f_c}^{-f_c} & r_0^{-f_c} &(-f_c)r_1^{-f_c}& \ldots &(-f_c)r_{f_c}^{-f_c} \\
\vdots & &\vdots  &\vdots & \vdots & &\vdots \\
r_1^{k} & \ldots &r_{f_c}^{k} & r_0^k &kr_1^{k}& \ldots &k r_{f_c}^{k} \\
\vdots & &\vdots &\vdots & \vdots & &\vdots \\
r_1^{f_c} & \ldots &r_{f_c}^{f_c} & r_0^{f_c}&(f_c)r_1^{f_c}& \ldots &(f_c)r_{f_c}^{f_c} \\
\end{pmatrix}.
\end{align*}
 We claim that $M$ is invertible. Indeed, if there exists $\alpha\in \CC^{(2f_c+1)}$ such that
  $M^T \alpha=0$, then the rational function $F(z) = \sum_{k=-f_c}^{f_c} \alpha_k z^{k}$ satisfies:
\begin{align*}
  	F(r_j)&=0 \qandq F'(r_j)=0 \ \mbox{ for } 1\leq j \leq f_c,\\
  	F(r_0)&=0.
\end{align*}
Hence, $F$ has at least $2f_c+1$ roots in $\SS^1$, counting the multiplicities. This imposes that $F=0$, thus $\alpha=0$,
 and $M$ is invertible. The result is proved.

%%%%%%%%%%%%%%%%%%%%%%%%%%%%%%%%%%%%%%%%%%%%%%%%%%%%%%%%%%%%%%%%%%%%%%%%%%%%%%%%
\section{Proof of Proposition~\ref{prop-cv-fixedla}}
\label{sec-proof2}

Let us denote by $P_{C_n}(x)$ the projection of $x\in L^2(\TT)$ onto $C_n$. We have:

\begin{align*}
\left\| P_{C_n}(\frac{y_0}{\la})-P_{C_n}(0)\right\|_2 \leq \left\| \frac{y_0}{\la} - 0\right\|_2,
\end{align*}
so that the sequence $p_{\la}^{\Gg_n}=P_{C_n}(\frac{y_0}{\la})$ is bounded in $L^2(\TT)$, and we may extract a subsequence $p_{\la}^{\Gg_n'}$
which weakly converges to some $p_\la^\star\in L^2(\TT)$. Since $C_{n'}$ is (weakly) closed for all $n'$, $p_\la^\star\in \bigcap_{n'} C_{n'}=C$.

Moreover, by the characterization of the projection onto convex sets:
\begin{align*}
\forall z\in C\subset C_n', \ \left\langle \frac{y_0}{\la} - p_{\la}^{\Gg_n'},z\right\rangle  - \left\langle \frac{y_0}{\la},p_{\la}^{\Gg_n'}\right\rangle +\norm{p_{\la}^{\Gg_n'}}_2^2 &\leq 0.\\
\mbox{ Passing to the limit } n'\to +\infty, \ \left\langle \frac{y_0}{\la} - p_{\la}^\star,z\right\rangle - \left\langle \frac{y_0}{\la},p_{\la}^\star\right\rangle + \liminf_{n'} \norm{p_{\la}^{\Gg_n'}}_2^2 &\leq 0,\\
 \left\langle \frac{y_0}{\la} - p_{\la}^\star,z\right\rangle  - \left\langle \frac{y_0}{\la},p_{\la}^\star\right\rangle + \norm{p_{\la}^\star}_2^2 &\leq 0,\\
 \left\langle \frac{y_0}{\la} - p_\la^\star,z - p_\la^\star\right\rangle &\leq 0.\\
\end{align*}
Thus $p_\la^\star$ is the orthogonal projection of $\frac{y_0}{\la}$ on $C$: $p_\la^\star=P_{C}\left(\frac{y_0}{\la}\right)=p_\la$.
Since this is true for any subsequence, the whole sequence $p_{\la}^{\Gg_n}$ weakly converges to $p_\la$.

Moreover, by lower semincontinuity and the inclusion $C\subset C_n$ we have:
\begin{align*}
\left\|\frac{y_0}{\lambda}-p_\la\right\|_2 &\leq \liminf_{n\to +\infty} \left\|\frac{y_0}{\lambda}-p_{\la}^{\Gg_n}\right\|_2\leq \limsup_{n\to +\infty}\left\|\frac{y_0}{\lambda}-p_{\la}^{\Gg_n}\right\|_2 \leq \left\|\frac{y_0}{\lambda}-p_{\la}\right\|_2, \\
\end{align*}
so that $\frac{y_0}{\lambda}-p_{\la}^{\Gg_n}$ converges strongly to $\frac{y_0}{\lambda}-p_{\la}$,
 hence the strong convergence of $p_{\la}^{\Gg_n}$ to $p_\la$.

The rest of the statement follows from Proposition~\ref{prop-gamma-convergence}.

\bibliographystyle{plain}
\bibliography{bibliography}
  
\end{document}